\newtheorem{theorem}{Theorem}[section]
\newtheorem*{theorem*}{Theorem}
\newtheorem{lem}{Lemma}[section]
\newtheorem{corollary}{Corollary}[section]
\newtheorem{definition}[theorem]{Definition}
\newcommand{\setR}{\mathbb{R}}
\newcommand{\setN}{\mathbb{N}}
\newcommand{\setC}{\mathbb{C}}
\newcommand{\setZ}{\mathbb{Z}}
\newcommand{\XXint}[3]{{\setbox0=\hbox{$#1{#2#3}{\int}$}
      \vcenter{\hbox{$#2#3$}}\kern-.5\wd0}}
\newcommand{\settmp}[2]{#1\{{#2}#1\}}
\newcommand{\set}[1]{\settmp{}{#1}}
\newcommand{\bigset}[1]{\settmp{\big}{#1}}
\newcommand{\normtmp}[2]{#1\lVert{#2}#1\rVert}
\newcommand{\norm}[1]{\normtmp{}{#1}}
\newcommand{\Bignorm}[1]{\normtmp{\Big}{#1}}
\newcommand{\abstmp}[2]{#1\lvert{#2}#1\rvert}
\newcommand{\abs}[1]{\abstmp{}{#1}}
\newcommand{\Bigabs}[1]{\abstmp{\Big}{#1}}
\newcommand{\Div}{\operatorname{div}} %Divergenzoperator 
\newcommand{\curl}{\operatorname{curl}}
\newcommand{\metai}{\overline{m}_\eta}
\newcommand{\metaii}{\overline{\overline{m}}_\eta}
\newcommand{\mi}{\overline{m}}
\newcommand{\mii}{\overline{\overline{m}}}
\newcommand{\ui}{\overline{u}}
\newcommand{\uii}{\overline{\overline{u}}}
\newcommand{\wi}{\overline{m}_1}
\newcommand{\wii}{\overline{m}_2}
\newcommand{\wiii}{\overline{m}_3}
\newcommand{\uwi}{\overline{w}}
\newcommand{\uwii}{\overline{\overline{w}}}
\newcommand{\vi}{\overline{u}_1}
\newcommand{\vii}{\overline{u}_2}
\newcommand{\viii}{\overline{u}_3}
\newcommand{\dt}{\mathbf{T}} %Demagnetisierungstensor
\newcommand{\Poincare}{Poincar\'e}
\newcommand{\diag}{\operatorname{diag}}
\title{\sc Periodic solutions for the Landau-Lifshitz-Gilbert equation\
\thanks{{\bf 2010 Mathematics Subject Classification}: 78A99; 35Q60; 35B10.\newline
{\bf Key words}: Micromagnetism, Landau-Lifshitz-Gilbert equation, time-periodic solutions, continuation method, spectral analysis, sectorial operators.} }
\author{\sc  Alexander Huber}
\date{}
\begin{document}

\maketitle

\begin{abstract}
Ferromagnetic materials tend to develop very complex magnetization patterns whose time evolution is modeled by the so-called Landau-Lifshitz-Gilbert equation (LLG). In this paper, we construct time-periodic solutions for LLG in the regime of soft and small ferromagnetic particles which satisfy a certain shape condition. Roughly speaking, it is assumed that the length of the particle is greater than its hight and its width. The approach is based on a perturbation argument and the spectral analysis of the corresponding linearized problem as well as the theory of sectorial operators.
\end{abstract}

\section{Introduction}
Ferromagnetic materials show a large variety of magnetic microstructures, which can be made visible with the help of the Magneto-optical Kerr effect microscopy (we refer to the book by Hubert and Sch\"afer \nolinebreak \cite{HS} for a description of this microscopy method and plenty of beautiful magneto-optical images). Roughly speaking, one observes regions where the magnetization is almost constant 
(named magnetic domains) and transition layers separating them (known as domain walls).
%\\[2ex]

The theory of micromagnetism, as presented in the book by 
Brown \cite{brown} (see also \cite{HS} and the book by Aharoni \cite{aharoni2})
explains the multiple magnetic phenomena (in the static case) by (local) minimization 
of a certain energy functional, the micromagnetic energy $E$. The quantity to be predicted is the magnetization vector $m:\Omega \to J_s S^2 \subset \setR^3$ of a ferromagnetic sample $\Omega \subset \setR^3$, where
the pointwise constraint $\abs{m(x)}=J_s$ (also known as ``saturation constraint'') reflects the experimentally confirmed fact that the magnitude of the magnetization is constant (the positive scalar $J_s$ is called saturation magnetization). In suitable units, the micromagnetic energy $E(m)$ associated with the magnetization vector $m$ is given by
\begin{align*}
 E(m) = d^2 \int_\Omega \abs{\nabla m}^2 \, dx+ Q \int_\Omega \varphi(m) \, dx
  + \int_{\setR^3} \abs{H[m]}^2 \, dx
  - 2 \int_\Omega h_{\text{ext}} \cdot m \, dx
\end{align*}
and we can assume w.l.o.g. that $J_s=1$, i.e., $m$ has values in the unit sphere $S^2$.
On the right hand side, the four terms are called exchange energy, anisotropy energy, stray field energy, and external field energy, respectively. The exchange energy explains the tendency towards parallel alignment, where the positive material constant \nolinebreak $d$ is called exchange length or Bloch line width. Preferred directions due to crystalline anisotropies -- so-called easy axis -- are modeled by the material function $\varphi:S^2 \to [0,\infty)$ and the positive quality factor $Q$. 
%The anisotropy energy penalizes deviations of the easy axis. 
Furthermore, the magnetization $m$ induces a magnetic field $H[m]$ -- the so-called stray field -- that solves the Maxwell equations of magnetostatics
\begin{align}
\label{eq:Maxwell}
 \begin{array}{rl}
    \Div \big( H[m] + \chi_{\Omega} \, m \big) &= 0
    \\[-0,2cm]
    \curl H[m] &= 0
 \end{array} \quad \text{in } \setR^3
\end{align}
and whose energy contribution is given by the stray field energy.
Finally, the external field energy captures the interactions of $m$ with an external magnetic field $h_\text{ext}$.
%\\[2ex]

In view of their different behavior, the four energies can
never simultaneously achieve their minimal values. For example, the exchange energy prefers constant magnetizations, whereas the stray field energy favors divergence free magnetizations. The resulting 
competition between the four energies explains (some of) the observed microstructures in 
ferromagnetic materials.
%\\[2ex]

Most of the (qualitative) mathematical theory so far has focused on the static case (an extensive
list of references can be found in the survey article by DeSimone, Kohn, M{\"u}ller, and Otto \nolinebreak \cite{dkmo}).
Here instead, the corresponding evolution equation with regard to time-periodic solutions is studied. In the physics community, this equation is known as Landau-Lifshitz-Gilbert equation (LLG) and given by
\begin{align*}
    m_t = \alpha \, m \times H_{\text{eff}} - m \times \big( m \times H_{\text{eff}} 
    \big) \quad\text{in } \setR \times \Omega
\end{align*}
(\textquotedblleft$\times$\textquotedblright{} denotes the usual vector product in $\setR^3$) together with homogeneous Neumann boundary conditions
\begin{align*}
 \frac{\partial m}{\partial \nu} = 0 \quad \text{on } \setR \times \partial \Omega
\end{align*}
and the saturation constraint $\abs{m}=1$. Here $2 H_{\text{eff}} = -\nabla_{L^2} E(m) $ is the effective magnetic field and $\alpha \in \setR$ is a parameter. The so-called ``gyromagnetic'' term $\alpha \, m \times H_{\text{eff}}$ describes 
a precession around $H_{\text{eff}}$, whereas the ``damping'' term $- m  \times \big( m \times H_{\text{eff}}\big) $ tries to 
align $m$ with $H_{\text{eff}}$. Mathematically speaking, the Landau-Lifshitz-Gilbert equation is a hybrid heat and Schr{\"o}dinger flow for the free energy $E$.
The paper at hand addresses the following question:
\\[1ex]
\centerline{\itshape{Do there exist time-periodic solutions for LLG when the external magnetic field is periodic in time?}}
\\[1ex]
Due to the complexity of LLG, one can not expect to obtain a universally valid answer to that question. Here we restrict ourself to the regime of soft and small ferromagnetic particles. \textquotedblleft{}Soft\textquotedblright{} refers to the case when $Q=0$, and \textquotedblleft{}small\textquotedblright{} means that $\abs{\Omega} \ll 1$, where $\abs{\Omega}$ denotes the three-dimensional Lebesgue measure of $\Omega$. In this case, the effective magnetic field $H_\text{eff}$ is given by
\begin{align*}
 H_{\text{eff}} = d^2 \Delta m + H[m] + h_{\text{ext}}
\end{align*}
with stray field $H[m]$ and external magnetic field $h_\text{ext}$. If we neglect the precession term (i.e. $\alpha =0$), LLG is simply the harmonic map heat flow into the sphere with an additional lower order (but non-local) term. We therefore expect that in general solutions of LLG tend to develop singularities in finite time (see for example the recent book by Guo and Ding \cite{GuoDing} and references therein) which, of course, makes it a difficult task to construct time-periodic solutions. Even in the case $h_\text{ext}=0$, the most natural time-periodic solutions for LLG -- namely minimizers of the micromagnetic energy functional -- are not regular in general (see Hardt, Kinderlehrer \cite{hardtkinderlehrer} and Carbou \cite{carbou}).
%\\[2ex]

The situation is more accommodating in the case of small ferromagnetic particles. Indeed, a scaling argument shows that the exchange energy determines the behavior of the magnetization to a large extent. Thus, it is to be expected that for very small samples, minimizers of the micromagnetic energy functional are nearly constant (but still not trivial). In \cite{DeSimone} a reduced model for small bodies is derived by means of $\Gamma$-convergence, and it turns out that the celebrated model of Stoner and Wohlfarth \cite{Stoner} is recovered. As expected, the limiting energy functional is defined on a finite-dimensional space, which makes it easy to study its local and global minimizers. Moreover, as shown in \cite{DeSimone}, the limiting energy functional captures the asymptotic behavior of local and global minimizers of the full energy functional as the size of the particle tends to zero. 
For small particles, it is therefore possible to show that minimizers of the micromagnetic energy functional are regular, provided $\partial \Omega$ is smooth enough (see \cite{alex_regularity}, \cite{alex_diss}).
%\\[2ex]

In this paper, regular minimizers of the micromagnetic energy functional form our starting point for the construction of time-periodic solutions. We make use of a perturbation argument based on the spectral properties of the corresponding linearized problem and the implicit function theorem. Besides the fact that we work with small particles, we also have to impose additional assumptions on the shape of the ferromagnetic sample in terms of the eigenvalues of the demagnetizing tensor $\dt$ (see Section \ref{sec:spectral analysis - second step}). The resulting magnetic shape anisotropy is used to rule out nontrivial zeros for the linearization. As a consequence, we can show the existence of $T$-periodic solutions for LLG in the regime of soft and small ferromagnetic particles satisfying a certain shape condition, where at the same time it is assumed that the amplitude of the $T$-periodic external magnetic field is sufficiently small (see Section \nolinebreak \ref{sec:main result - small particles}).
% with respect to every $T$-periodic external magnetic field with sufficiently small amplitude and arbitrary time-period $T>0$ (see Section \nolinebreak \ref{sec:main result - small particles}). 
We note that a related existence result for N{\'e}el walls in thin ferromagnetic films is proved in \cite{alex_walls}.
%\\[2ex]

The paper is organized as follows:
In Section \ref{sec:energy functional for small particles} we introduce the rescaled LLG for small particles and show the existence of regular solutions close to a regular minimizer in Section \ref{sec:existence of solutions - small particles} with the help of analytic semigroups and optimal regularity theory. We explain our perturbation argument in Section \nolinebreak \ref{sec:continuation} and derive the necessary spectral properties of the linearization in Sections \ref{sec:spectral analysis - first step} and \ref{sec:spectral analysis - second step}. This is used in Section \ref{sec:main result - small particles} to prove the existence of time-periodic solutions.

\section{Energy functional and LLG for small particles}
\label{sec:energy functional for small particles}
\paragraph{Scaling of the energy functional and regularity.}
We consider a small and soft ferromagnetic particle modeled by $\Omega_\eta = \eta \, \Omega \subset \setR^3$. Here, $\Omega \subset \setR^3$ is a bounded domain with $\abs{\Omega}=1$, and $\eta>0$ is a (small) parameter representing the size of the particle.
In order to have a magnetization vector defined on a fixed domain, we rescale space by
\begin{align*}
 x \mapsto \frac{1}{\eta} x
\end{align*}
and obtain -- after a further renormalization of the energy by $\eta$ -- the rescaled energy functional (here stated with $h_\text{ext}=0$)
\begin{align*}
 E_{\text{res}}^\eta (m) =  d^2 \int_\Omega \abs{\nabla m}^2 \, dx+ \eta^2 \int_{\setR^3} \abs{H[m]}^2 \, dx\, .
\end{align*}
The rescaled magnetization $m:\Omega \to S^2$ is now defined on a fixed domain independent of $\eta$, and the corresponding stray field $H[m]$ is a solution of the static Maxwell equations \eqref{eq:Maxwell}.
% \begin{align*}
%  \begin{array}{rl}
%     \Div \big( H[m] + \chi_{\Omega} \, m \big) &= 0
%     \\[-0,2cm]
%     \curl H[m] &= 0
%  \end{array} \quad \text{in } \setR^3 \,.
% \end{align*}
In the sequel we vary the (small) parameter $\eta$ and set $d=1$ for convenience.
%\\[2ex]

With standard methods from the calculus of variations, it can easily be seen that $E_{\text{res}}^\eta$ admits a minimizer $m_\eta$ in the set of admissible functions defined by
\begin{align*}
 H^1(\Omega,S^2) = \bigset{m \in H^1(\Omega,\setR^3) \, \big| \,  \abs{m}=1 \text{ almost everywhere}} \, .
\end{align*}
We remark that the stray field $H[u]$ for functions $u \in L^2(\Omega,\setR^3)$ is uniquely determined by the requirement $H[u] \in L^2(\setR^3,\setR^3)$. Moreover, we have the following basic lemma for the stray field operator \nolinebreak $H$:
\begin{lem}
\label{lem:stray field}
 The stray field operator $H$ defines a linear and bounded mapping from
 $L^2(\Omega,\setR^3)$ to $L^2(\setR^3,\setR^3)$ and satisfies the identity
\begin{align*}
 \int_{\setR^3} H[u]\cdot H[v] \, dx= - \int_\Omega H[u] \cdot v\, dx
\end{align*}
for every $u,v \in L^2(\Omega,\setR^3)$. In particular, the mapping $u \mapsto H[u]_{|\Omega}$ is symmetric on $L^2(\Omega,\setR^3)$.
\end{lem}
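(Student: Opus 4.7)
My plan is to use the scalar potential representation. Since $\curl H[u]=0$ in $\setR^3$ and $H[u]\in L^2(\setR^3,\setR^3)$, there exists a unique (modulo constants) potential $\varphi_u$ with $\nabla \varphi_u \in L^2(\setR^3,\setR^3)$ and $H[u]=-\nabla \varphi_u$. Substituting into the first equation of \eqref{eq:Maxwell} and taking distributional divergence produces the weak formulation
\begin{align*}
 \int_{\setR^3} \nabla \varphi_u \cdot \nabla \psi \, dx = \int_\Omega u \cdot \nabla \psi \, dx, \qquad \psi \in \dot H^1(\setR^3).
\end{align*}
Existence and uniqueness of $\varphi_u$ in the homogeneous Sobolev space $\dot H^1(\setR^3)/\setR$ follow from Lax--Milgram, and this simultaneously establishes linearity of the map $u \mapsto H[u]$.

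For the bilinear identity I test the weak formulation for $\varphi_u$ with $\psi = \varphi_v \in \dot H^1(\setR^3)$. This gives
\begin{align*}
 \int_{\setR^3} H[u] \cdot H[v] \, dx
 = \int_{\setR^3} \nabla \varphi_u \cdot \nabla \varphi_v \, dx
 = \int_\Omega u \cdot \nabla \varphi_v \, dx
 = -\int_\Omega u \cdot H[v] \, dx.
\end{align*}
The left-hand side is symmetric in $u$ and $v$, so swapping the two arguments yields the identity as stated, and symmetry of $u \mapsto H[u]_{|\Omega}$ on $L^2(\Omega,\setR^3)$ is an immediate consequence. Boundedness then follows by taking $v=u$ and applying Cauchy--Schwarz:
\begin{align*}
 \int_{\setR^3} \abs{H[u]}^2 \, dx = -\int_\Omega H[u]\cdot u \, dx \le \norm{H[u]}_{L^2(\setR^3)} \, \norm{u}_{L^2(\Omega)},
\end{align*}
so $\norm{H[u]}_{L^2(\setR^3)} \le \norm{u}_{L^2(\Omega)}$, which is continuity with operator norm at most one.

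The only genuinely subtle point is that $\varphi_v$ is not compactly supported, so its use as a test function has to be justified. This is where I rely on the structure of $\dot H^1(\setR^3)$: in three dimensions, Sobolev embedding yields $\dot H^1(\setR^3) \hookrightarrow L^6(\setR^3)$ and $C_c^\infty(\setR^3)$ is dense in $\dot H^1(\setR^3)$ with respect to the gradient seminorm, so the weak formulation, initially obtained from the Maxwell system for smooth compactly supported $\psi$, extends by density to all admissible potentials. Once this is set up, the rest of the proof is essentially a disguised integration by parts and requires no further calculation.
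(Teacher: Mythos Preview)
Your proof is correct and follows essentially the same approach as the paper. The paper's own proof is just a two-line sketch: it notes that $H[u]$ is a gradient and that the identity follows from the weak formulation for $H[v]$; your argument fills in exactly these details (potential representation, Lax--Milgram, testing with the other potential, density in $\dot H^1$), with the only cosmetic difference that you test the weak formulation for $\varphi_u$ with $\varphi_v$ rather than the other way round, then invoke symmetry of the left-hand side.
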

\begin{proof}
 The proof is standard (see for example \cite{dkmo}). We remark that $H[u]$ is actually a gradient, and therefore, we obtain the stated identity 
with the help of the weak formulation for $H[v]$.
\end{proof}
Furthermore, we know that minimizers of $E_{\text{res}}^\eta$ become ``regular'' if $\eta>0$ is sufficiently small. More precisely, the following theorem holds (consult \cite{alex_regularity} or \cite{alex_diss} for a proof):
\begin{theorem}
  \label{thm:regularity}
  Let $\Omega \subset \setR^3$ be a bounded $C^{2,1}$-domain. There exist positive constants $\eta_0 = \eta_0(\Omega)$ and $C_0=C_0(\Omega)$ with the following property: If $m_\eta$ is a minimizer of $E^\eta$ on $H^1(\Omega,S^2)$ with parameter $0<\eta\le\eta_0$, then 
  \begin{align}
   \label{eq:regularity}
   m_\eta \in H^2_N(\Omega,\setR^3) \cap C^{1,\gamma}(\overline{\Omega},\setR^3) \qquad \text{and} \qquad \norm{\nabla m_\eta}_{L^\infty} \le C_0 \eta
  \end{align}
  for every $\gamma \in (0,1)$, where ``$N$'' stands for homogeneous Neumann boundary conditions $\frac{\partial m_\eta}{\partial \nu} =0$ ($\nu$ is the outer normal of $\partial \Omega$). Moreover, $m_\eta$ satisfies the Euler-Lagrange equation
 \begin{align}
 \label{eq:Euler-Lagrange}
  \Delta m_\eta +  \abs{\nabla m_\eta}^2 m_\eta - \eta^2 m_\eta \cdot H[m_\eta] \, m_\eta + \eta^2 H[m_\eta] = 0 \, .
\end{align}
\end{theorem}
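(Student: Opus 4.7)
The plan has three steps: derive the Euler-Lagrange equation from constrained first variations, bound the Dirichlet energy of $m_\eta$ by $O(\eta^2)$ via comparison with a constant map, and then combine $\varepsilon$-regularity for minimizing $S^2$-valued harmonic maps with an elliptic bootstrap to obtain both the regularity and the quantitative gradient estimate.

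For the Euler-Lagrange equation, I would perturb $m_\eta$ by $m_\eta^t := (m_\eta + t\phi)/\abs{m_\eta + t\phi}$ for $\phi \in C^\infty(\overline{\Omega}, \setR^3)$, which remains in $H^1(\Omega, S^2)$ for small $\abs{t}$. Computing $\frac{d}{dt}\big|_{t=0} E_{\text{res}}^\eta(m_\eta^t) = 0$, the exchange term integrates by parts against the tangential perturbation $\phi - (\phi \cdot m_\eta) m_\eta$ and produces $-\Delta m_\eta - \abs{\nabla m_\eta}^2 m_\eta$; the stray-field term, via the symmetry identity from Lemma 2.1 applied to $\frac{d}{dt}\int_{\setR^3}\abs{H[m_\eta^t]}^2$, yields $\eta^2 H[m_\eta] - \eta^2 (m_\eta \cdot H[m_\eta]) m_\eta$. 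Arbitrariness of $\phi$ gives the Euler-Lagrange equation weakly, upgraded to the pointwise identity once regularity is available.

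For the energy bound, testing with any constant $c \in S^2$ gives $E_{\text{res}}^\eta(c) = \eta^2 \int_{\setR^3}\abs{H[c]}^2 \, dx \le C(\Omega)\eta^2$ by the boundedness of the stray-field operator (Lemma 2.1), so minimality forces $\int_\Omega \abs{\nabla m_\eta}^2 \le C\eta^2$. For $\eta \le \eta_0$ small, the monotonicity formula for minimizing harmonic maps then makes the scaled energy $r^{-1}\int_{B_r(x_0)\cap\Omega}\abs{\nabla m_\eta}^2$ fall below the Schoen-Uhlenbeck $\varepsilon_0$-threshold on every ball of radius $r \le \mathrm{diam}(\Omega)$. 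The $\varepsilon$-regularity theorem for minimizing $S^2$-valued harmonic maps (with the non-local term $\eta^2 H[m_\eta] \in L^2$ treated as a lower-order forcing), together with its boundary analogue for Neumann data on $C^{2,1}$-domains, yields smoothness of $m_\eta$ up to $\partial \Omega$; a Neumann $W^{2,p}$/Schauder bootstrap in the Euler-Lagrange equation then upgrades this to $H^2_N(\Omega,\setR^3) \cap C^{1,\gamma}(\overline{\Omega},\setR^3)$.

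Finally, for the quantitative bound $\norm{\nabla m_\eta}_{L^\infty} \le C_0 \eta$, I would either run a Schauder/absorption argument on $-\Delta(m_\eta - c_\eta) = \abs{\nabla m_\eta}^2 m_\eta + \eta^2 H[m_\eta] - \eta^2(m_\eta \cdot H[m_\eta])m_\eta$ with $c_\eta$ an appropriate mean value, exploiting that the right-hand side is $O(\eta^2)$ in $L^p$ once $\norm{\nabla m_\eta}_{L^\infty}$ is known to be small, or a blow-up/compactness contradiction: if $\norm{\nabla m_{\eta_k}}_{L^\infty}/\eta_k \to \infty$ along a sequence $\eta_k \to 0$, rescaling yields a nontrivial $S^2$-valued minimizing harmonic map on $\setR^3$ (or a half-space with Neumann data) with vanishing Dirichlet energy, contradicting rigidity. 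The hardest step is this quantitative estimate: tracking the $\eta$-dependence through the quadratic nonlinearity $\abs{\nabla m_\eta}^2 m_\eta$ and the non-local $\eta^2 H[m_\eta]$ simultaneously is what distinguishes the argument from a routine application of $\varepsilon$-regularity, and is presumably the technical heart of the proofs in \cite{alex_regularity} and \cite{alex_diss}.
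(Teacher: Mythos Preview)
The paper does not contain a proof of this theorem: it is stated with the remark ``consult \cite{alex_regularity} or \cite{alex_diss} for a proof'' and then used as a black box. There is therefore nothing in the present paper to compare your proposal against; you yourself correctly identify the cited references as the place where the technical work is carried out.

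That said, your outline is a reasonable sketch of how such a regularity result is typically obtained. The comparison argument giving $\int_\Omega \abs{\nabla m_\eta}^2 \le C\eta^2$ is exactly the starting point one expects (and a sharper version, $\norm{\nabla m_\eta}_{L^2} \le C\eta^2$, is in fact proved later in the paper as Lemma~\ref{lem:l2estimate}, though under the additional hypothesis that \eqref{eq:regularity} already holds). The $\varepsilon$-regularity/monotonicity strategy with the stray-field term treated as a lower-order perturbation is the natural route, and your two alternatives for the quantitative $L^\infty$ gradient bound (elliptic absorption versus blow-up contradiction) are both standard. The genuinely delicate point, as you note, is extracting the sharp $\eta$-dependence through the quadratic nonlinearity; since the paper delegates this entirely to the cited works, there is no further comparison to make here.
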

We call minimizers $m_\eta$ of $E_\text{res}^\eta$ which satisfy property \eqref{eq:regularity} \textquotedblleft{}regular minimizers\textquotedblright{}.
%\\[2ex]
In the sequel we frequently make use of the vector identities
\begin{gather}
\label{eq:vector identities}
(a \times b) \cdot c = - (c \times b) \cdot a \quad \text{and} \quad
a \times (b \times c) = (a \cdot c) b - (a\cdot b) c
\end{gather} 
for $a,b,c \in \setR^3$.
In particular, we have that
\begin{align}
 \label{eq:identity for Laplacian}
 \Delta m + \abs{\nabla m}^2 m = - m \times ( m \times \Delta m)
\end{align}
for all $m \in H^2(\Omega,\setR^3)$ with $\abs{m}=1$, and we can rewrite the Euler-Lagrange equation \eqref{eq:Euler-Lagrange} as follows:
\begin{align*}
 m_\eta \times \left( m_\eta \times \left( \Delta m_\eta + \eta^2 H[m_\eta]\right) \right) = 0 \,.
\end{align*}
From here we see that
\begin{align}
 \label{eq:m is parallel to H effective}
 m_\eta \parallel  \Delta m_\eta + \eta^2 H[m_\eta]
\end{align}
for all regular minimizers $m_\eta$ of $E_{\text{res}}^\eta$, where \textquotedblleft$\parallel$\textquotedblright{} stands for \textquotedblleft{}parallel\textquotedblright.
\paragraph{Rescaled LLG.} Now, we consider LLG for our soft ferromagnetic particle $\Omega_\eta$. If we rescale space and time by
\begin{align*}
 x \mapsto \frac{1}{\eta} x \qquad \text{and} \qquad t \mapsto \frac{1}{\eta^2} t \, ,
\end{align*}
respectively, we obtain the rescaled LLG
\begin{align}
 \label{eq:rescaled LLG}
 m_t = \alpha \, m \times H_\text{eff}^\text{res} - m \times ( m \times H_\text{eff}^\text{res}) \quad \text{in } \setR \times \Omega
\end{align}
together with homogeneous Neumann boundary conditions and saturation constraint $\abs{m}=1$. Again, the rescaled magnetization $m:\setR \times \Omega \to S^2$ is now defined on a fixed domain independent of the parameter $\eta$, and the rescaled effective magnetic field $H_\text{eff}^\text{res}$ is given by
\begin{align*}
 H_\text{eff}^\text{res} = \Delta m + \eta^2 H[m] + \eta^2 h_\text{ext} \,.
\end{align*}
With the help of \eqref{eq:m is parallel to H effective}, we see that regular minimizers of $E_{\text{res}}^\eta$ are stationary solutions for the rescaled LLG if $h_\text{ext}=0$.
%\\[2ex]

In the following we assume throughout that $\Omega$ is a bounded $C^{2,1}$-domain and that $\eta$ is sufficiently small. In order to prove the existence of $T$-periodic solutions for the rescaled LLG in the case of $T$-periodic external magnetic fields with small amplitude, we perturb a regular minimizer of $E_{\text{res}}^\eta$ and introduce a new parameter $\lambda$. To be more precise, we replace the external magnetic field $h_\text{ext}$ by $\lambda  h$, where $h:\setR \times \Omega \to \setR^3$ is $T$-periodic in time and $\lambda \in \setR$.

\section[Existence of solutions and dependence on \dots]{Existence of solutions and dependence on the data}
\sectionmark{Existence of solutions and dependence on \dots}
\label{sec:existence of solutions - small particles}
In this section we show the existence of solutions for the rescaled LLG close to a regular minimizer
$m_\eta$ of $E_{\text{res}}^\eta$ by means of the implicit function theorem and optimal regularity theory for the linearized problem. Moreover, we prove the differentiability of these solutions with respect to $\lambda$ and the initial value. Our proof employs the theory of sectorial operators and analytic semigroups (see the book by Lunardi \nolinebreak \cite{lunardi} for a self-contained presentation). First of all, we recall the central definition of this theory:
\begin{definition}
\label{def:sectorial}
 Let $X$ be a Banach space and $A:D(A) \subset X \to X$ be a linear operator. We say that $A$ is sectorial if there are constants $\omega \in \setR$, $\theta \in (\pi/2,\pi)$, and $M>0$ such that the resolvent set $\rho(A)$ contains the sector $S_{\theta,\omega} = \set{ \lambda \in \setC \, | \, \lambda \not= \omega, \, \abs{\arg(\lambda - \omega)} < \theta}$,
and the resolvent estimate
\begin{align*}
 \norm{\text{R}(\lambda,A)} \le \frac{M}{\abs{\lambda - \omega}}
\end{align*}
is satisfied for all $\lambda \in  S_{\theta,\omega}$. Moreover, we denote by $(e^{tA})_{t \ge 0}$ the analytic semigroup generated by the sectorial operator $A$.
\end{definition}
\noindent We remark that sectorial operators $A:D(A) \subset X \to X$ are closed, and the domain of definition $D(A)$ becomes a Banach space when equipped with the graph norm $\norm{\cdot}_{D(A)}$ of $A$ defined as usual by
%\begin{align*}
 $\norm{x}_{D(A)} = \norm{x}_X + \norm{Ax}_X$
%\end{align*}
for $x \in D(A)$. 
%\\[2ex]

In the sequel we work with an equivalent version for the rescaled LLG given by
\begin{equation}
 \label{eq:new version for rescaled LLG}
%\begin{split}
m_t = \Delta m + \alpha \, m \times \Delta m + \abs{\nabla m}^2 m + \alpha \, \eta^2 \, m \times ( H[m] + \lambda h) 
%\\
- \eta^2 \, m \times \big( m \times (H[m] + \lambda h) \big)
%\end{split}
\end{equation}
together with homogeneous Neumann boundary conditions and saturation constraint $\abs{m}=1$.
This is indeed equivalent to \eqref{eq:rescaled LLG} thanks to \eqref{eq:identity for Laplacian}.
To be able to work in linear
 spaces, we first forget about the pointwise constraint $\abs{m}=1$ and show a posteriori that this
 is true if it is satisfied by the initial value. For $\lambda = 0$ the linearization $\mathcal{L}^\eta$ of the right hand side of \eqref{eq:new version for rescaled LLG} with respect to $m$ at $m_\eta$ is given by
\begin{align}
\label{eq:linearization}
\begin{split}
 \mathcal{L}^\eta u =& \Delta u + \alpha \, m_\eta \times \Delta u 
+ 2 \, \nabla u : \nabla m_\eta \, m_\eta + \abs{\nabla m_\eta}^2 u  + \alpha \, u \times \Delta m_\eta + \alpha \, \eta^2 \, m_\eta \times H[u] 
\\
&+ \alpha \, \eta^2 \, u \times H[m_\eta] - \eta^2 \, m_\eta \times \big( m_\eta \times H[u] \big) - \eta^2 \, m_\eta \times \big( u \times H[m_\eta] \big) - \eta^2 \, u \times \big (m_\eta \times H[m_\eta] \big)
\end{split}
\end{align}
for $u \in H^2_N(\Omega,\setR^3)$. We remark that the linear operator $\mathcal{L}^\eta$ is well-defined as a mapping from $H^2_N(\Omega,\setR^3)$ to $L^2(\Omega,\setR^3)$ thanks to the embedding $H^2(\Omega,\setR^3) \hookrightarrow L^\infty(\Omega,\setR^3)$. Furthermore, we can show that $\mathcal{L}^\eta$ is sectorial:
\begin{lem}
 \label{lem:SmallSectorial}
 The linear operator $\mathcal{L}^\eta : H^2_N(\Omega,\setR^3) \subset L^2(\Omega,\setR^3) \to
 L^2(\Omega,\setR^3)$ is sectorial, and the graph norm of $\mathcal{L}^\eta$ is equivalent to the $H^2$-norm.
\end{lem}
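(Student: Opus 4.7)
The strategy is the standard route for variable-coefficient elliptic systems: isolate the principal part, show it is a strongly elliptic matrix-valued second-order operator whose $L^2$-realization with Neumann boundary conditions is sectorial, and then absorb the lower-order and non-local remainder by a relative boundedness argument with arbitrarily small relative bound. Concretely, write $\mathcal{L}^\eta = A_0 + R^\eta$, where the principal part is $A_0 u = \Delta u + \alpha\, m_\eta \times \Delta u = M(m_\eta)\,\Delta u$ with $M(m) := I + \alpha\,[m]_\times$ (and $[m]_\times$ the skew-symmetric matrix representing $v \mapsto m \times v$), while $R^\eta$ collects all terms that contain $\nabla u$, $u$, or the stray field $H[u]$. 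By Theorem \ref{thm:regularity}, the coefficients of $R^\eta$ lie in $L^\infty(\Omega)$.

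The key algebraic observation is that $\det M(m) = 1 + \alpha^2 |m|^2$, so for $|m_\eta| = 1$ the matrix $M(m_\eta)$ is uniformly invertible, with $C^{1,\gamma}(\overline{\Omega})$ entries. Moreover, for every $v \in \setC^3$ one has $v^*[m_\eta]_\times v = m_\eta \cdot (v \times \overline{v}) \in i\setR$ (because $v \times \overline{v}$ is purely imaginary), so that $\text{Re}(v^* M(m_\eta) v) = |v|^2$. Hence the principal symbol $|\xi|^2 M(m_\eta(x))$ of $-A_0$ satisfies the strong Legendre ellipticity condition uniformly on $\overline{\Omega}$; the eigenvalues of $M(m_\eta(x))$ are $1$ and $1 \pm i\alpha$, all in the right half plane.

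From here I would invoke the standard $L^2$-theory for strongly elliptic systems with $C^{1,\gamma}$ coefficients on a $C^{2,1}$-domain equipped with componentwise Neumann boundary conditions, for which the Lopatinskii-Shapiro condition is inherited from the scalar Laplacian via the pointwise invertibility of $M(m_\eta)$; this yields sectoriality of $A_0 : H^2_N(\Omega,\setR^3) \to L^2(\Omega,\setR^3)$ together with the a priori estimate $\norm{u}_{H^2} \leq C\bigl(\norm{A_0 u}_{L^2} + \norm{u}_{L^2}\bigr)$ (see Chapter 3 of \cite{lunardi}). In particular, the graph norm of $A_0$ is equivalent to the $H^2$-norm on $H^2_N$. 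By Lemma \ref{lem:stray field}, $u \mapsto H[u]$ is bounded $L^2 \to L^2$, so $R^\eta$ defines a bounded operator $H^1(\Omega,\setR^3) \to L^2(\Omega,\setR^3)$; combining Gagliardo-Nirenberg interpolation with the $H^2$-estimate above yields, for every $\varepsilon > 0$,
\begin{align*}
\norm{R^\eta u}_{L^2} \,\leq\, \varepsilon\, \norm{A_0 u}_{L^2} + C_\varepsilon\, \norm{u}_{L^2}, \qquad u \in H^2_N(\Omega,\setR^3),
\end{align*}
so $R^\eta$ is $A_0$-bounded with arbitrarily small relative bound. The standard perturbation theorem for sectorial operators (cf.\ \cite{lunardi}) then transfers sectoriality to $\mathcal{L}^\eta = A_0 + R^\eta$ on the same domain $H^2_N(\Omega,\setR^3)$, and the graph norm equivalence with the $H^2$-norm is preserved.

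The main obstacle is the passage from scalar sectoriality results to the matrix-valued operator $A_0$ with Neumann boundary conditions. Two natural routes are available: directly invoke a sectoriality result for strongly elliptic systems in the Agmon-Douglis-Nirenberg framework, or reduce to a scalar-like formulation by rewriting $A_0 u = f$ as the diagonal Neumann Laplacian system $\Delta u = M(m_\eta)^{-1} f$ and exploiting the uniform invertibility and $C^{1,\gamma}$-regularity of $M(m_\eta)$. Either way, once the strong Legendre ellipticity established in the second paragraph is in place, the remaining steps are routine within Lunardi's framework.
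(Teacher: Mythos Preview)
Your argument matches the paper's: isolate the principal part $M(m_\eta)\Delta$, verify strong ellipticity via $\text{Re}(v^*M(m_\eta)v)=|v|^2$, and absorb the remainder by a perturbation result from \cite{lunardi}. For the obstacle you flag, the paper invokes Amann's normal-ellipticity theory for systems \cite[Theorems 4.2 and 2.4]{Amann2} rather than Lunardi; it then treats the remainder as a \emph{compact} (rather than relatively bounded) perturbation, after noting that $H:H^2\to H^2$ is bounded \cite[Lemma 2.3]{carboufabrie}, so that $H[m_\eta]\in L^\infty$ and hence, via the Euler--Lagrange equation, $\Delta m_\eta\in L^\infty$ --- a point your appeal to Theorem~\ref{thm:regularity} alone does not quite cover.
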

\begin{proof}
 We use the decomposition $\mathcal{L}^\eta = \mathcal{L}^\eta_1 + \mathcal{L}^\eta_2$, where
 \begin{align*}
 \mathcal{L}^\eta_1 : H^2_N(\Omega,\setR^3) \subset L^2(\Omega,\setR^3) \to
 L^2(\Omega,\setR^3): 
 u \mapsto \Delta u + \alpha \, m_\eta \times \Delta u \, ,
 \end{align*}
 and show that
 $\mathcal{L}^\eta_1$ is sectorial. For this we use the definitions and notations from \cite{Amann2}
 and write
\begin{align*}
 \mathcal{L}^\eta_1 u = A \Delta u = 
 \left(
\begin{array}{ccc}
 1 & -\alpha m^\eta_3 & \hphantom{-}\alpha m^\eta_2
 \\
 \hphantom{-}\alpha m^\eta_3 & 1 & -\alpha m^\eta_1
 \\
 -\alpha m^\eta_2 & \hphantom{-}\alpha m^\eta_1 & 1
\end{array}
 \right)
\Delta u = \sum_{j,k=1}^3 a_{jk} \partial_j\partial_k u
\, ,
\end{align*}
where $a_{jk} = \delta_{jk} A \in C \big(\overline{\Omega}, \setR^{3 \times 3} \big)$. For $x \in \overline{\Omega}$, $\xi \in \setR^3\setminus\set{0}$, and $\zeta \in \setC^3 \setminus \set{0}$ we find
\begin{align*}
 \sum_{j,k,r,s} a_{jk}^{rs}(x) \xi^j \xi^k \zeta_r \overline{\zeta}_s = \sum_{j,r,s} A^{rs}(x)
 \big(\xi^j\big)^2 \zeta_r \overline{\zeta}_s = \abs{\xi}^2 \sum_{r,s} A^{rs}(x) \zeta_r
 \overline{\zeta}_s \,.
\end{align*}
With the decomposition $\zeta = \zeta^1 + \dot{\imath}\zeta^2$, where $\zeta^1$, $\zeta^2 \in \setR^3$, it is easy to see that
\begin{align*}
 \text{Re} \sum_{j,k,r,s} a_{jk}^{rs}(x) \xi^j \xi^k \zeta_r \overline{\zeta}_s = \abs{\xi}^2
 \abs{\zeta}^2 >0 \,.
\end{align*}
This shows that the symbol of $\mathcal{L}^\eta_1$ is strongly uniformly elliptic. If we set $\mathcal{A} = \mathcal{L}^\eta_1$ and $\mathcal{B}= \partial_\nu$ ($\nu$ is the unit outer normal of $\partial \Omega$), then we obtain with the help of  
 \cite[Theorem 4.2]{Amann2} that $(\mathcal{A},\mathcal{B})$ is a normally elliptic boundary value problem of second order. Now, \cite[Theorem 2.4]{Amann2} implies that the $L^2$-realization of $(\mathcal{A},\mathcal{B})$, i.e., the operator $\mathcal{L}^\eta_1$,
is sectorial. Moreover, the open mapping theorem implies that the graph norm of $\mathcal{L}^\eta_1$ is equivalent to the $H^2$-norm.

Next, we show that $\mathcal{L}^\eta_2$ is a lower order perturbation of $\mathcal{L}^\eta_1$. For this we first recall that $\nabla m_\eta$ belongs to $L^\infty\big(\Omega,\setR^{3\times3}\big)$, and thanks to \cite[Lemma 2.3]{carboufabrie}, we know that the
restricted stray field operator
\begin{align*}
  H: H^2(\Omega,\setR^3) \to H^2(\Omega,\setR^3) : m \mapsto H[m]_{|\Omega}
\end{align*}
is linear and bounded. In particular, we find $H[m_\eta] \in L^\infty(\Omega,\setR^3)$, and the Euler-Lagrange equation \eqref{eq:Euler-Lagrange} implies
$\Delta m_\eta \in L^\infty(\Omega,\setR^3)$. A combination of these facts together with the Sobolev embedding theorem shows that
$\mathcal{L}^\eta_2 : H^2_N(\Omega,\setR^3) \to L^2(\Omega,\setR^3)$
is a compact operator. Now, the perturbation result \cite[Proposition 2.4.3]{lunardi} implies that $\mathcal{L}^\eta$ is sectorial, and similarly as above, we see that the graph norm of $\mathcal{L}^\eta$ is equivalent to the $H^2$-norm. The lemma is proved.
%the statement of the lemma follows from the perturbation result 
%\cite[Proposition 2.4.3]{lunardi}.
\end{proof}
In order to state the announced optimal regularity result, we have to introduce weighted function spaces of H{\"o}lder continuous functions (we use the notation from \cite{lunardi}). Let therefore $0<\beta<1$ be given and $X$ be a Banach space. The space $C^{0,\beta}_\beta(]0,T],X)$ is the set of all bounded functions $f:\,]0,T] \to X$ such that
\begin{align*}
 [f]_{C^{0,\beta}_\beta(]0,T],X)} = \sup_{0<\epsilon<T} \epsilon^\beta [f]_{C^{0,\beta}([\epsilon,T],X)}
 < \infty \,.
\end{align*}
This forms a Banach space with norm defined by
\begin{align*}
\norm{f}_{C^{0,\beta}_\beta(]0,T],X)} = \norm{f}_{C(]0,T],X)} + [f]_{C^{0,\beta}_\beta(]0,T],X)} \,.
\end{align*}
Moreover, we write $C^{1,\beta}_\beta (]0,T],X)$ for the set of all bounded and differentiable functions $f :\,]0,T] \to X$ with derivative $f' \in C^{0,\beta}_\beta(]0,T],X)$. Again, this is a Banach space with norm defined by
\begin{align*}
\norm{f}_{C^{1,\beta}_\beta (]0,T],X)} = \norm{f}_{C(]0,T],X)} + \norm{f'}_{C^{0,\beta}_\beta(]0,T],X)} \,.
\end{align*}
 We can now state the required optimal regularity result which is a combination of \cite[Corollary 4.3.6]{lunardi} (ii) and (iii):
\begin{lem}
 \label{lem:optimal regularity}
 Suppose $X$ is a Banach space and $A:D(A) \subset X \to X$ is a densely defined sectorial operator.
Let $0<\beta<1$, $0<T<\infty$, $u_0 \in D(A)$, and
$f \in C([0,T],X) \cap C^{0,\beta}_\beta(]0,T],X)$ be given. Furthermore, let
 $u$ be the mild solution of
 \begin{align*}
  u_t = Au + f(t), \quad u(0) = u_0 \,,
 \end{align*}
 by definition
 \begin{align*}
  u(t) = e^{tA} u_0 + \int_0^t e^{(t-s)A} f(s) \, ds \, , \qquad 0 \le t \le 
  T \,.
 \end{align*}
 Then $u$ is a strict solution, this means $u \in C^1([0,T],X) \cap C\big([0,T],D(A)\big)$ and the
 above equation holds pointwise. Moreover, we have the optimal regularity result $u_t$, $A u \in
 C([0,T],X) \cap C^{0,\beta}_\beta(]0,T],X)$.
\end{lem}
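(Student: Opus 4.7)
The plan is to prove the lemma by the classical variation-of-parameters argument for analytic semigroups; since the result is literally quoted from \cite[Corollary 4.3.6]{lunardi}, my sketch mirrors the structure of that proof. First I would split $u(t) = e^{tA}u_0 + v(t)$ with $v(t) = \int_0^t e^{(t-s)A} f(s)\,ds$. For the homogeneous piece, the assumption $u_0 \in D(A)$ together with sectoriality of $A$ gives $t \mapsto e^{tA}u_0 \in C^1([0,T],X) \cap C([0,T],D(A))$ with derivative $Ae^{tA}u_0 = e^{tA}Au_0$; the weighted Hölder estimates are immediate for this term because $Au_0$ is fixed and $\tau \mapsto e^{\tau A}Au_0$ is smooth on $(0,T]$ and continuous on $[0,T]$.

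The substantive part is the convolution $v$. I would write $f(s) = f(t) + (f(s) - f(t))$, so that
\[
 v(t) = \int_0^t e^{\sigma A}\,d\sigma \; f(t) + \int_0^t e^{(t-s)A}\bigl(f(s) - f(t)\bigr)\,ds.
\]
The first term has range in $D(A)$ because $A\int_0^t e^{\sigma A}\,d\sigma = e^{tA}-I$ on $X$. For the second term, the sharp sectorial estimate $\|Ae^{\tau A}\|_{L(X)} \le M e^{\omega \tau}/\tau$ combined with the weighted Hölder bound $\|f(s)-f(t)\|_X \le [f]_{C^{0,\beta}_\beta}\, s^{-\beta}(t-s)^\beta$ yields an absolutely convergent Beta-type integral, so
\[
 Av(t) = (e^{tA}-I)f(t) + \int_0^t Ae^{(t-s)A}\bigl(f(s)-f(t)\bigr)\,ds.
\]
Continuity of this expression in $t$, dominated convergence, and continuity of $f$ at zero then yield $v \in C([0,T],D(A)) \cap C^1([0,T],X)$ with $v'(t)=Av(t)+f(t)$, which is the strict solution property.

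The main difficulty is the weighted Hölder estimate for $Au$ and $u_t$. For $0 < r < t \le T$ one has to bound $\|Av(t)-Av(r)\|_X$ by $C\,[f]_{C^{0,\beta}_\beta}\, r^{-\beta}(t-r)^\beta$. I would split the difference into contributions over $[0,r]$ and $[r,t]$, handle the ``near-diagonal'' piece on $[r,t]$ directly with $\|Ae^{\tau A}\|_{L(X)} \le M/\tau$, and on $[0,r]$ use the finer estimate $\|Ae^{(t-s)A}-Ae^{(r-s)A}\|_{L(X)} \le C(t-r)/(r-s)^2$, which follows from $\frac{d}{d\tau}Ae^{\tau A}=A^2 e^{\tau A}$ and the sectorial bound $\|A^2 e^{\tau A}\|_{L(X)} \le C/\tau^2$. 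Matching exponents through a Beta-function computation reproduces the weighted norm, and the corresponding bound for $v_t = Av + f$ is immediate since $f \in C^{0,\beta}_\beta(]0,T],X)$. The main obstacle is carrying out these singular integral estimates and tracking the weights so that the exponent on $(t-r)$ comes out as exactly $\beta$ and the blow-up factor is exactly $r^{-\beta}$, as required by the definition of $C^{0,\beta}_\beta(]0,T],X)$; the key is the pair of sharp sectorial bounds on $Ae^{\tau A}$ and $A^2 e^{\tau A}$.
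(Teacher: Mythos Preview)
Your sketch is correct and follows the standard variation-of-parameters argument from Lunardi's book. Note, however, that the paper does not actually prove this lemma: it states it as a direct combination of \cite[Corollary 4.3.6 (ii) and (iii)]{lunardi} and gives no further argument. So there is no ``paper's own proof'' to compare against---you have simply supplied the details the paper chose to omit by citation, and your outline matches the structure of Lunardi's proof that the citation points to.
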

Next, we define the Banach spaces
\begin{align*}
 X = C^1([0,T],L^2) \cap C([0,T],H^2_N) \cap C^{0,\beta}_\beta(]0,T],H^2_N) \cap
 C^{1,\beta}_{\beta}(]0,T],L^2)
\end{align*}
and
\begin{align*}
 Y = C([0,T],L^2) \cap C^{0,\beta}_\beta(]0,T],L^2)
\end{align*}
with respective norms
\begin{align*}
 \norm{\cdot}_X = \norm{\cdot}_{C^1([0,T],L^2)} + \norm{\cdot}_{C([0,T],H^2_N)} 
 + \norm{\cdot}_{C^{0,\beta}_\beta(]0,T],H^2_N)} + \norm{\cdot}_{C^{1,\beta}_{\beta}(]0,T],L^2)}
\end{align*}
and
\begin{align*}
 \norm{\cdot}_Y = \norm{\cdot}_{C([0,T],L^2)} + \norm{\cdot}_{C^{0,\beta}_\beta(]0,T],L^2)} \,.
\end{align*}
We come to our existence result:
\begin{lem}
 \label{lem:existence}
 Let $0<\beta<1$ and $h \in C^{0,\beta}\big(\setR,L^2(\Omega,\setR^3) \big)$ be given.
 Furthermore, let $m_\eta$ be a regular minimizer of $E_{\text{res}}^\eta$ with parameter $\eta>0$. Then
 there exists an open neighborhood $U_\eta$ of $m_\eta$ in $H^2_N(\Omega,\setR^3)$ and an open neighborhood
 $V_\eta$ of $0$ in $\setR$ such that
 \begin{align*}
 \tag*{$(LLG)_\eta$}
 %\hspace*{-0,2cm}
  m_t = \Delta m + \alpha \, m \times \Delta m + \abs{\nabla m}^2 m + \alpha \, \eta^2 \, m \times (
 H[m] + \lambda h) - \eta^2 \, m \times \big( m \times (H[m] + \lambda h) \big)
%  %\hspace*{-0,2cm}
%  \begin{array}{ccl}
%   m_t &\hspace*{-0,3cm}=& \hspace*{-0,3cm}\Delta m + \alpha \, m \times \Delta m + \abs{\nabla m}^2 m + \alpha \, \eta^2 \, m \times (
%  H[m] + \lambda h) - \eta^2 \, m \times \big( m \times (H[m] + \lambda h) \big)
%  %\\
%  %&\hspace*{-0,3cm}&\hspace*{-0,3cm}
%  \\
%  \frac{\partial m}{\partial \nu} &\hspace*{-0,3cm}=& \hspace*{-0,3cm}0
%  \\
%  m(0) &\hspace*{-0,3cm}=& \hspace*{-0,3cm}u
%  \end{array}
 \end{align*}
 possesses a unique solution $m(\cdot,u,\lambda) \in X$ (close to $m_\eta$) with $m(0) = u$ and $\frac{\partial m}{\partial \nu} =0$  for all $u \in U_\eta$
 and all $\lambda \in V_\eta$. Moreover, the mapping $(u,\lambda) \mapsto m(\cdot,u,\lambda)$ is smooth 
 and $D_u m(T,m_\eta,0) = e^{T \mathcal{L}^\eta}$.
\end{lem}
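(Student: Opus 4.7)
The plan is to apply the implicit function theorem (IFT) to a suitable reformulation of $(LLG)_\eta$. I would define
\begin{align*}
\Phi: X \times H^2_N(\Omega,\setR^3) \times \setR \to Y \times H^2_N(\Omega,\setR^3), \quad \Phi(m,u,\lambda) = \bigl(m_t - F^\eta_\lambda(m),\, m(0) - u\bigr),
\end{align*}
where $F^\eta_\lambda(m)$ denotes the right-hand side of \eqref{eq:new version for rescaled LLG}. Combining \eqref{eq:identity for Laplacian} with \eqref{eq:m is parallel to H effective} one checks that $F^\eta_0(m_\eta) = 0$, so the constant-in-time curve $m_\eta$ lies in $X$ and satisfies $\Phi(m_\eta, m_\eta, 0) = 0$, providing the reference point for the IFT.

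Next I would verify that $\Phi$ is of class $C^\infty$ between these Banach spaces. All nonlinearities appearing in $F^\eta_\lambda$ are polynomial in $m$, $\nabla m$, $\Delta m$, $H[m]$ and affine in $\lambda$; smoothness of the induced superposition map from $X$ to $Y$ rests on the Banach-algebra embedding $H^2(\Omega) \hookrightarrow L^\infty(\Omega)$ in three dimensions, on the boundedness of the restricted stray-field operator $H: H^2 \to H^2$ from \cite[Lemma 2.3]{carboufabrie}, on the $C^{0,\beta}$-regularity of $h$ in time, and on standard multiplicative estimates in the weighted Hölder spaces $C^{0,\beta}_\beta$. The evaluation $m \mapsto m(0)$ is bounded from $X$ to $H^2_N$.

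The decisive step is to show that the partial derivative
\begin{align*}
D_m \Phi(m_\eta, m_\eta, 0)(w) = \bigl(w_t - \mathcal{L}^\eta w,\, w(0)\bigr)
\end{align*}
is a topological isomorphism from $X$ onto $Y \times H^2_N$. By Lemma \ref{lem:SmallSectorial} the operator $\mathcal{L}^\eta$ is sectorial with $D(\mathcal{L}^\eta) = H^2_N$ dense in $L^2$, so Lemma \ref{lem:optimal regularity} supplies, for every $(f, w_0) \in Y \times H^2_N$, a unique strict solution $w$ of $w_t = \mathcal{L}^\eta w + f$, $w(0) = w_0$ with $w_t,\, \mathcal{L}^\eta w \in Y$. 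Combined with the equivalence of the graph norm of $\mathcal{L}^\eta$ and the $H^2$-norm, this places $w$ in $X$ with continuous data dependence; injectivity being clear, the open mapping theorem then upgrades the bijection to an isomorphism.

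The IFT now produces open neighborhoods $U_\eta \ni m_\eta$ in $H^2_N$ and $V_\eta \ni 0$ in $\setR$ together with a unique smooth map $(u,\lambda) \mapsto m(\cdot,u,\lambda) \in X$ satisfying $\Phi(m(\cdot,u,\lambda), u, \lambda) = 0$, i.e., the desired solution of $(LLG)_\eta$ with $m(0) = u$. To obtain $D_u m(T, m_\eta, 0) = e^{T\mathcal{L}^\eta}$, I would differentiate $\Phi(m(\cdot,u,\lambda), u,\lambda) = 0$ with respect to $u$ at $(m_\eta, 0)$: using the just-established isomorphism, one finds that $z := D_u m(\cdot, m_\eta, 0)[\xi]$ solves the homogeneous linear Cauchy problem $z_t = \mathcal{L}^\eta z$, $z(0) = \xi$, whose unique solution is $z(t) = e^{t \mathcal{L}^\eta}\xi$ by the definition of the analytic semigroup; evaluation at $t = T$ then yields the claim. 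The principal technical obstacle is verifying smoothness of $\Phi$ in the weighted Hölder framework, in particular controlling the $C^{0,\beta}_\beta(]0,T],L^2)$-seminorms of cubic nonlinearities involving both spatial derivatives and the stray field; the invertibility step, by contrast, is essentially a direct invocation of the already-available optimal-regularity theory.
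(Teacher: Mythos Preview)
Your proposal is correct and follows essentially the same route as the paper: define the nonlinear map $\Phi(m,u,\lambda) = (m_t - F^\eta_\lambda(m),\, m(0)-u)$ (this is exactly the paper's $G$), check smoothness via the embeddings $H^2 \hookrightarrow L^\infty$ and $H^1 \hookrightarrow L^4$, invoke Lemma~\ref{lem:SmallSectorial} together with the optimal regularity Lemma~\ref{lem:optimal regularity} to invert $D_m\Phi(m_\eta,m_\eta,0)$, apply the implicit function theorem, and differentiate the resulting identity at $(m_\eta,0)$ to obtain $D_u m(T,m_\eta,0)=e^{T\mathcal{L}^\eta}$. The only cosmetic difference is that you spell out the open mapping theorem and the weighted H\"older product estimates a bit more explicitly than the paper does.
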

\begin{proof}
 We define $F:X \times \setR \to Y$ by
\begin{align*}
 F(m,\lambda) = &m_t -\Delta m - \alpha \, m \times \Delta m - \abs{\nabla m}^2 m 
 - \alpha \,
 \eta^2 \, m \times (H[m] + \lambda h) + \eta^2 \, m \times \big( m \times (H[m] + \lambda h) \big)
\end{align*}
and $G:X\times H^2_N \times \setR \to Y \times H^2_N$ by $G(m,u,\lambda) = \big( F(m,\lambda) , m(0) - u\big)$.
Thanks to the embeddings $H^2(\Omega) \hookrightarrow L^\infty(\Omega)$ and $H^1(\Omega) \hookrightarrow L^4(\Omega)$, it is easy to see that $F$ and $G$ are well-defined and smooth. Moreover, we know that $G(m_\eta,m_\eta,0) = (0,0)$ since regular minimizers of $E_{\text{res}}^\eta$ are stationary solutions for LLG with $h_\text{ext}=0$. 

In the sequel we prove that $D_m G(m_\eta,m_\eta,0)$ is invertible. For this we have to show that for all $f \in Y$ and all $u \in H^2_N$ there exists a unique $m \in X$ such that
%\begin{align*}
  $m_t = \mathcal{L}^\eta m + f(t)$ and $m(0)=u$.
%\end{align*}
Let therefore $f$ and $u$ be given. Since $\mathcal{L}^\eta$ is sectorial, we find a unique mild solution $m$. Thanks to Lemma \ref{lem:optimal regularity}, we obtain $m \in X$, where we have also used that the graph norm of $\mathcal{L}^\eta$ is equivalent to the $H^2$-norm. We conclude that $D_m G(m_\eta,m_\eta,0)$ is invertible. With the help of the implicit function theorem, we find open neighborhoods $U_\eta$ of $m_\eta$ in $H^2_N$, $V_\eta$ of $0$ in $\setR$, and a smooth mapping $m:U_\eta \times V_\eta \to X$ such that $m(\cdot,m_\eta,0)=m_\eta$ and $G\big(m(\cdot,u,\lambda),u,\lambda\big)=(0,0)$ for all $u \in U_\eta$ and $\lambda \in V_\eta$. Hence, $m(\cdot,u,\lambda)$ is the solution we are looking for.

For the remaining statement, we have to calculate the derivative $D_u m(T,m_\eta,0)$ and use the identity $G\big(m(\cdot,u,0),u,0\big)=(0,0)$ to see that
\begin{align*}
 (0,0) = D_m G(m_\eta,m_\eta,0) \circ D_u m (\cdot,m_\eta,0)(h) + D_u G(m_\eta,m_\eta,0)(h)
\end{align*}
for all $h \in H^2_N$. We now set $v = D_u m (\cdot,m_\eta,0)(h)$ for a given $h \in H^2_N$ and find $v_t = \mathcal{L}^\eta v$ and $v(0)=h$.
We conclude $v(t) = e^{t\mathcal{L}^\eta} h$ for all $t \in [0,T]$ and thus $D_u m(T,m_\eta,0) = e^{T\mathcal{L}^\eta}$. The lemma is proved.
\end{proof}
We continue by showing that the pointwise constraint $\abs{m}=1$ is conserved under time evolution:
\begin{lem}
\label{lem:punctual norm}
 Under the assumptions of Lemma \ref{lem:existence}, let $u \in U_\eta$ with $\abs{u}=1$ and $\lambda \in V_\eta$ be given. Then we have
 $\abs{m(t,u,\lambda)}=1$ for all $0\le t \le T$.
\end{lem}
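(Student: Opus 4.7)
The plan is to derive a scalar parabolic PDE for $\psi := |m|^2 - 1$ and then invoke uniqueness. To do so, I first take the pointwise dot product of the $(LLG)_\eta$ equation with $m$. The cross-product terms vanish on contact: $m \cdot (m \times \Delta m)=0$ and $m \cdot \bigl(m \times (H[m]+\lambda h)\bigr)=0$ by the first identity in \eqref{eq:vector identities}. For the remaining double cross product, the second identity in \eqref{eq:vector identities} gives $m \times (m \times f) = (m \cdot f)m - |m|^2 f$ with $f = H[m]+\lambda h$, and dotting with $m$ yields $(m \cdot f)|m|^2 - |m|^2(m\cdot f) = 0$. What survives is simply $m \cdot m_t = m \cdot \Delta m + |\nabla m|^2 |m|^2$.

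Next I use the identity $\Delta |m|^2 = 2 m \cdot \Delta m + 2|\nabla m|^2$ to substitute for $m\cdot\Delta m$. This gives
\begin{align*}
\tfrac{1}{2}\partial_t |m|^2 = \tfrac{1}{2}\Delta|m|^2 + |\nabla m|^2\bigl(|m|^2 - 1\bigr),
\end{align*}
that is, $\psi_t = \Delta \psi + 2|\nabla m|^2\,\psi$ in $(0,T) \times \Omega$. The Neumann condition $\partial_\nu m = 0$ propagates to $\partial_\nu \psi = 2 m \cdot \partial_\nu m = 0$ on $\partial\Omega$, and the initial condition $\psi(0) = |u|^2 - 1 = 0$ holds by hypothesis on $u$.

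It remains to conclude $\psi \equiv 0$ from the linear homogeneous Cauchy-Neumann problem. I would run a standard energy estimate: multiply by $\psi$, integrate over $\Omega$, and use the Neumann boundary condition to obtain $\tfrac{1}{2}\tfrac{d}{dt}\|\psi\|_{L^2}^2 + \|\nabla\psi\|_{L^2}^2 = 2\int_\Omega |\nabla m|^2 \psi^2\,dx$. Since $m \in C([0,T],H^2_N) \subset C([0,T],W^{1,6})$ in dimension three, $|\nabla m|^2$ lies in $C([0,T],L^3)$. I would then control the right-hand side via $\|\psi\|_{L^3}^2 \lesssim \|\psi\|_{L^2}\|\psi\|_{H^1}$ (Gagliardo-Nirenberg), absorb an $\epsilon\|\nabla\psi\|_{L^2}^2$ into the left-hand side via Young's inequality, and close with Gronwall's lemma to get $\|\psi(t)\|_{L^2}=0$ for all $t \in [0,T]$.

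The only nontrivial point I anticipate is precisely the regularity needed to justify the energy computation and the uniqueness step, since in three dimensions one does not have $\nabla m \in L^\infty$ merely from $m \in H^2$. This is handled by the interpolation above, which is available because the solution class $X$ built in Lemma \ref{lem:existence} embeds $m$ into $C([0,T],H^2_N)$. The computation that produces the PDE for $\psi$ itself is purely algebraic once the vector identities \eqref{eq:vector identities} are invoked, so no subtle cancellations are at stake.
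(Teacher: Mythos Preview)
Your proof is correct and follows essentially the same route as the paper's: derive the scalar equation $\psi_t = \Delta\psi + 2|\nabla m|^2\,\psi$ by dotting $(LLG)_\eta$ with $m$, then run an $L^2$ energy estimate, control the zeroth-order coefficient via $\nabla m \in C([0,T],L^6)$ from $m \in C([0,T],H^2_N)$, use interpolation plus Young to absorb the gradient term, and conclude with Gronwall. The only cosmetic difference is that the paper interpolates $\|b\|_{L^4}$ between $L^2$ and $L^6$, whereas you estimate $\|\psi\|_{L^3}^2 \lesssim \|\psi\|_{L^2}\|\psi\|_{H^1}$ after pairing $|\nabla m|^2\in L^3$ with $\psi^2\in L^{3/2}$; both close in the same way.
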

\begin{proof}
 We write $m = m(\cdot,u,\lambda)$ and define $b = \abs{m}^2 -1$. Because of $H^2_N(\Omega) \hookrightarrow L^\infty(\Omega)$, the space $H^2_N(\Omega)$ forms an algebra, and we find
%\begin{align*}
 $b \in C^1([0,T],L^2) \cap C([0,T],H^2_N)$ and $\Delta b = 2 \Delta m \cdot m + 2
 \abs{\nabla m}^2$.
%\end{align*}
Next, we calculate:
\begin{align*}
 b_t = 2 m_t \cdot m
 %\\
 = 2 ( \Delta m + \abs{\nabla m}^2 m ) \cdot m
 %\\
 = \Delta b - 2 \abs{\nabla m}^2 + 2 \abs{\nabla m}^2 \, \abs{m}^2
 %\\
 = \Delta b + 2 \abs{\nabla m}^2 \, b \, .
\end{align*}
We take the $L^2$-scalar product with $b$ and obtain
\begin{align*}
 \frac{1}{2} \frac{d}{dt} \norm{b}_{L^2}^2 + \norm{\nabla b}_{L^2}^2 = 
 2 \int_\Omega \abs{\nabla m}^2 \,
 \abs{b}^2 \, dx\, .
\end{align*}
We now use the H{\"o}lder inequality with $(3,2,6)$ and find for the right hand side the estimate
\begin{align*}
 2 \int_\Omega \abs{\nabla m}^2 \,\abs{b}^2 \, dx\le 2 \, \norm{\nabla m}_{L^6}^2 \, \norm{b}_{L^4}^2 \le C
 \norm{m}_{H^2}^2 \, \norm{b}_{L^4}^2 \le C \norm{b}_{L^4}^2 \,.
\end{align*}
In order to estimate $\norm{b}_{L^4}^2$ in terms of $\norm{b}_{L^2}^2$ and $\norm{\nabla b}_{L^2}^2$, we make use of the interpolation inequality
%\begin{align*}
 $\norm{u}_{L^4} \le \norm{u}_{L^2}^{\frac{1}{4}} \norm{u}_{L^6}^{\frac{3}{4}}$
%\end{align*}
for all $u \in L^6(\Omega)$ and the embedding $H^1(\Omega) \hookrightarrow L^6(\Omega)$. We conclude
\begin{align*}
 2 \int_\Omega \abs{\nabla m}^2 \,\abs{b}^2  \, dx\le C
 \norm{b}_{L^2}^{\frac{1}{2}} \, \norm{b}_{L^6}^{\frac{3}{2}} \le C \, \norm{b}_{L^2}^2 + C \, \norm{b}_{L^2}^{\frac{1}{2}} \, \norm{\nabla b}_{L^2}^{\frac{3}{2}} \,.
\end{align*}
With the help of the Young inequality, we obtain
\begin{align*}
 2 \int_\Omega \abs{\nabla m}^2 \,\abs{b}^2 \, dx\le C \, \norm{b}_{L^2}^2 + \frac{1}{2}  \norm{\nabla
 b}_{L^2}^2 \, ,
\end{align*}
thus
\begin{align*}
 \frac{1}{2} \frac{d}{dt} \norm{b}_{L^2}^2 + \frac{1}{2} \norm{\nabla b}_{L^2}^2 \le C \,
 \norm{b}_{L^2}^2 \,.
\end{align*}
Integration leads to
\begin{align*}
 \norm{b(t)}_{L^2}^2 \le C \int_0^t \norm{b(s)}_{L^2}^2 \, ds \quad \text{for all } 0\le t \le T \,,
\end{align*}
where we have used that $b(0)=0$. The Gronwall inequality implies the statement of the lemma.
\end{proof}

\section{Continuation method}
\label{sec:continuation}
From now on, we assume that $h \in C^{0,\beta}\big(\setR,L^2(\Omega,\setR^3)\big)$, where $0<\beta<1$, 
is a $T$-periodic function. As above, we denote by $m(\cdot,u,\lambda)$ the solution of $(LLG)_\eta$ for $u \in H^2_N$ close to $m_\eta$ and $\lambda \in \setR$ close to $0$. Since $m_\eta$ is a stationary solution for $(LLG)_\eta$ with $\lambda = 0$, we already have a $T$-periodic solution $m$ with $\abs{m}=1$. We now ask whether we can find $T$-periodic solutions $m$ of $(LLG)_\eta$ with $\abs{m}=1$ for $\lambda \not=0$. To answer this question, we adapt a well-known method from the theory of ordinary differential equations to our situation. This method -- the so-called continuation method -- relies on the implicit function theorem and has an obvious extension to time-dependent partial differential equations. See for example the book by Amann \cite{Amann} for a description of this method in the setting of ODEs. In contrast to \cite{Amann}, we have a non standard situation at hand. In fact, due to the saturation constraint $\abs{m}=1$, we have to work on an infinite-dimensional manifold and refer to the book by Abraham, Marsden and Ratiu \cite{Abraham} for the notion of infinite-dimensional manifolds modeled on Banach spaces.
\begin{lem}
 \label{lem:manifold}
 The set $\mathcal{M} = \bigset{u \in H^2_N(\Omega,\setR^3) \, \big| \, \abs{u}=1 \text{ pointwise}}$
 defines a smooth submanifold of the space $H^2_N(\Omega,\setR^3)$. For all $u_0 \in \mathcal{M}$, the tangent space $T_{u_0}
 \mathcal{M}$ of $\mathcal{M}$ at $u_0$ is given by
\begin{align*}
 T_{u_0}\mathcal{M} = \bigset{u \in H^2_N(\Omega,\setR^3) \, \big| \, u \cdot u_0 = 0 \text{ pointwise}} \,.
\end{align*}
\end{lem}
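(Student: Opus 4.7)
The plan is to realize $\mathcal{M}$ as the zero level set of a smooth submersion between Banach spaces and then invoke the implicit function theorem (submersion theorem) in the form stated in \cite{Abraham}. Concretely, I would define
\[
\Phi \colon H^2_N(\Omega,\setR^3) \to H^2_N(\Omega,\setR), \qquad \Phi(u) = \abs{u}^2 - 1.
\]
First I would verify that $\Phi$ maps into the claimed target: since $H^2_N(\Omega)$ is a Banach algebra (a fact already invoked in the proof of Lemma \ref{lem:punctual norm}), $\abs{u}^2 = \sum_i (u^i)^2$ lies in $H^2(\Omega,\setR)$, and the Neumann condition on $u$ gives $\partial_\nu \abs{u}^2 = 2\, u \cdot \partial_\nu u = 0$, so indeed $\abs{u}^2 - 1 \in H^2_N(\Omega,\setR)$. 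Smoothness of $\Phi$ is immediate because it is a quadratic polynomial in $u$ with values in a Banach algebra.

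The core step is to show that $D\Phi(u_0)(v) = 2\, u_0 \cdot v$ is surjective with a complemented kernel at every $u_0 \in \mathcal{M}$. Surjectivity is easy: given $f \in H^2_N(\Omega,\setR)$, the candidate $v := \tfrac{1}{2} f\, u_0$ lies in $H^2(\Omega,\setR^3)$ by the algebra property and satisfies $\partial_\nu v = \tfrac{1}{2}(\partial_\nu f)\, u_0 + \tfrac{1}{2} f\, \partial_\nu u_0 = 0$, hence belongs to $H^2_N$ and solves $D\Phi(u_0)(v) = f$. For the complement I would introduce the bounded linear projector
\[
P \colon H^2_N(\Omega,\setR^3) \to H^2_N(\Omega,\setR^3), \qquad P(v) = (v \cdot u_0)\, u_0,
\]
which is bounded by the algebra property, satisfies $P^2 = P$ because $\abs{u_0} = 1$ pointwise, and whose kernel coincides with $\set{v \in H^2_N : v \cdot u_0 = 0} = \ker D\Phi(u_0)$. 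This gives the topological direct sum decomposition $H^2_N(\Omega,\setR^3) = \ker D\Phi(u_0) \oplus P\big(H^2_N(\Omega,\setR^3)\big)$.

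With these ingredients in place, the submersion theorem immediately yields that $\mathcal{M} = \Phi^{-1}(0)$ is a smooth embedded submanifold of $H^2_N(\Omega,\setR^3)$ and that $T_{u_0}\mathcal{M} = \ker D\Phi(u_0) = \set{u \in H^2_N(\Omega,\setR^3) : u \cdot u_0 = 0 \text{ pointwise}}$. The pointwise interpretation of both $\abs{u}=1$ and $u \cdot u_0 = 0$ is justified by the Sobolev embedding $H^2(\Omega) \hookrightarrow C^0(\overline{\Omega})$ in dimension three. The step I expect to require the most care is the bookkeeping of boundary conditions: one must insist on the codomain being $H^2_N(\Omega,\setR)$ rather than $H^2(\Omega,\setR)$, both so that $\Phi$ is genuinely defined into that target and so that the preimage $v = \tfrac12 f u_0$ chosen for surjectivity really satisfies the Neumann condition; choosing the codomain carelessly is the only thing that can break the argument.
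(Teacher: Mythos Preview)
Your proof is correct and follows essentially the same route as the paper: both define the map $\Phi(u)=\abs{u}^2-1$ into $H^2_N(\Omega)$, check surjectivity of $D\Phi(u_0)$ via the preimage $v=\tfrac12 f u_0$, and conclude by the submersion theorem. The only cosmetic difference is that the paper obtains the closed complement of $\ker D\Phi(u_0)$ by invoking the Hilbert space structure of $H^2_N(\Omega,\setR^3)$, whereas you construct the explicit projector $P(v)=(v\cdot u_0)\,u_0$; both arguments are equally valid.
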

\begin{proof}
 We define the map
 %\begin{align*}
  $F: H^2_N(\Omega,\setR^3) \to H^2_N(\Omega): u \mapsto \abs{u}^2 -1 $
 %\end{align*}
 and remark that $F$ is well-defined and smooth since $H^2_N(\Omega)$ forms a smooth
 algebra. Moreover, we have that $DF(u)v= 2 \, u \cdot v$ for all $u,v \in H^2_N(\Omega,\setR^3)$. In
 the sequel we show that $0$ is a regular value for $F$. This means we have to show that
 $DF(u)$ is surjective for all $u \in F^{-1}(0)$ and 
 $H^2_N(\Omega,\setR^3) = N(DF(u)) \oplus X(u)$ for all $u \in F^{-1}(0)$, where
  $X(u)$ is a closed subspace of $H^2_N(\Omega,\setR^3)$.
  Let therefore $u \in F^{-1}(0)$ be given. 
  For $f \in H^2_N(\Omega)$ we define 
  %\begin{align*}
  $v = 2^{-1} u \, f \in
  H^2_N(\Omega,\setR^3)$.
 %\end{align*}
  Then $DF(u)v = 2 \, u \cdot v = u \cdot u \, f = f$,
  hence $DF(u)$ is surjective. Moreover, $N(DF(u))$ is a closed subspace of
  $H^2_N(\Omega,\setR^3)$, and since $H^2_N(\Omega,\setR^3)$ is a Hilbert space, we find a 
  closed subspace $X(u) \subset H^2_N(\Omega,\setR^3)$ such that $H^2_N(\Omega,\setR^3) =
  N(DF(u)) \oplus X(u)$. This shows that $0$ is a regular value for $F$. The well
  known submersion theorem (see for example \cite[Submersion Theorem 3.5.4]{Abraham}) implies that
  $\mathcal{M}= F^{-1}(0)$ is a smooth submanifold of $H^2_N(\Omega,\setR^3)$ and $T_{u_0}\mathcal{M} =
 N(DF(u_0))$ for all $u_0 \in \mathcal{M}$. The lemma is proved.
\end{proof}
We make use of the following important observation:
\\[1ex]
\centerline{\itshape{
 $m(\cdot,u,\lambda)$ possesses a $T$-periodic extension to $\setR$ that solves $(LLG)_\eta$}}
 \\
 \centerline{\itshape{if and only if $m(T,u,\lambda) = u$.}}
\\[1ex]
In particular, $m=m(\cdot,u,\lambda)$ defines a $T$-periodic solution for $(LLG)_\eta$ with saturation constraint $\abs{m}=1$ if and only if $m(T,u,\lambda)=u$ and $\abs{u}=1$. In order to solve the parameter dependent fixed point equation on $\mathcal{M}$, we need the following version of the implicit function theorem on infinite-dimensional manifolds:
\begin{lem}
\label{lem:continuation}
 Let $\mathcal{N}$ be a smooth manifold and $f:U \times V \to \mathcal{N}$ be a smooth
 mapping, where $U\subset \mathcal{N}$, $V \subset \setR$ are open subsets and $0 \in V$. Suppose that $f(u_0,0) = u_0$ for some $u_0 \in U$ and that
 the operator $D_1 f(u_0,0) - I: T_{u_0} \mathcal{N} \to T_{u_0} \mathcal{N}$
is an isomorphism. Then there exist an open neighborhood $W\subset V$ of $0$ and a smooth mapping $g:W \to U$ such that $g(0)=u_0$ and $f\big(g(\lambda),\lambda) = g(\lambda)$ for all $\lambda \in W$.
\end{lem}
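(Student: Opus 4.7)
The plan is to reduce the statement to the classical Banach-space implicit function theorem by working in a single smooth chart around $u_0$. Let $E$ denote the model Banach space of $\mathcal{N}$ near $u_0$ and pick a smooth chart $\varphi:O \to \tilde{O} \subset E$ with $u_0 \in O \subset U$ and $\varphi(u_0)=0$. By continuity of $f$, and since $f(u_0,0)=u_0 \in O$, after shrinking $U$ and $V$ we may assume that $f(U \times V) \subset O$, so that $f$ can be pulled back into the chart.

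Next, transport $f$ into the chart by setting
\begin{align*}
 \tilde{f}:\varphi(U) \times V \to \tilde{O}, \qquad \tilde{f}(x,\lambda) = \varphi\bigl( f(\varphi^{-1}(x),\lambda)\bigr),
\end{align*}
and define $\Phi(x,\lambda) = \tilde{f}(x,\lambda) - x$. Then $\Phi$ is smooth and $\Phi(0,0)=0$. Since $\tilde{f}(0,0)=0$, the chain rule (together with the standard identification $T_{u_0}\mathcal{N} \cong E$ via $D\varphi(u_0)$) gives
\begin{align*}
 D_x \tilde{f}(0,0) = D\varphi(u_0) \circ D_1 f(u_0,0) \circ D\varphi(u_0)^{-1},
\end{align*}
so $D_x \Phi(0,0)$ is conjugate to $D_1 f(u_0,0) - I$ and is therefore a topological isomorphism of $E$ by hypothesis.

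Now apply the classical implicit function theorem for smooth maps between Banach spaces to $\Phi$: it produces an open neighborhood $W \subset V$ of $0$ and a smooth map $\tilde{g}:W \to \varphi(U)$ with $\tilde{g}(0)=0$ and $\Phi(\tilde{g}(\lambda),\lambda)=0$ for every $\lambda \in W$. Setting $g = \varphi^{-1} \circ \tilde{g}:W \to U$ yields a smooth map with $g(0)=u_0$ and $f(g(\lambda),\lambda)=g(\lambda)$, as required.

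There is no real obstacle; the entire proof is chart bookkeeping. The one point worth emphasizing is that the assumption $f(u_0,0)=u_0$ is doubly essential: it is what makes $D_1 f(u_0,0)-I$ a well-defined endomorphism of $T_{u_0}\mathcal{N}$ in the first place (so the isomorphism hypothesis has meaning on the manifold), and it is precisely what guarantees $\Phi(0,0)=0$ in the chart, which is the initial condition needed to invoke the Banach-space implicit function theorem.
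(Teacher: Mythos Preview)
Your proof is correct and follows essentially the same route as the paper's: choose a chart at $u_0$, pull $f$ back to the model Banach space, form the difference map $\Phi(x,\lambda)=\varphi(f(\varphi^{-1}(x),\lambda))-x$, observe that $D_x\Phi(0,0)$ is conjugate to $D_1 f(u_0,0)-I$, apply the Banach-space implicit function theorem, and push back via $\varphi^{-1}$. The only differences are notational, together with your added remark about the dual role of the hypothesis $f(u_0,0)=u_0$.
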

\begin{proof}
 We choose an admissible chart around the point $u_0$. This means that we have an open neighborhood $U_0 \subset U$ of
 $u_0$ in $\mathcal{N}$ together with an $C^\infty$-diffeomorphism $\varphi : U_0 \to \varphi(U_0)$, where
 $\varphi(U_0)$ is an open subset of a Banach space $E$. We can assume that $\varphi (u_0) = 0$, and after shrinking $U_0$ and $V$, we can define
 %\begin{align*}
  $F:\varphi(U_0) \times V \to E : (w,\lambda) \mapsto \varphi\big( f( \varphi^{-1}
 (w),\lambda) \big) - w$.
 %\end{align*}
 Then $F$ is a smooth mapping between Banach spaces and $F(0,0)=0$. Moreover, we find with the chain
 rule that
 \begin{align*}
  D_1 F(0,0) = D \varphi (u_0) \circ D_1 f(u_0,0) \circ D \varphi^{-1} (0) - I 
  %\\
  =  D \varphi (u_0)
 \circ \big(D_1 f(u_0,0) - I\big) \circ D \varphi^{-1} (0) \, ,
 \end{align*}
 hence $D_1 F(0,0)$ is an isomorphism. We can now use the implicit function theorem and
 find an open neighborhood $W \subset V$ of $0$ and a smooth mapping $\overline{g}:W \to \varphi
 (U_0)$ such that
 \begin{align*}
  0= F(\overline{g}(\lambda),\lambda) = \varphi\big( f( \varphi^{-1} \circ
 \overline{g}(\lambda),\lambda) \big) - \overline{g}(\lambda)
 \end{align*}
 for all $\lambda \in W$. Finally, we define $g= \varphi^{-1} \circ \overline{g}$, and the statement
 of the lemma follows.
\end{proof}
In view of Lemma \ref{lem:continuation}, we have to study the invertibility of the linear operator
\begin{align*}
 D_u m(T,m_\eta,0) - I = e^{T\mathcal{L}^\eta} - I
\end{align*}
on the tangent space $T_{m_\eta}\mathcal{M}$. We start by analyzing the restriction of $\mathcal{L}^\eta$ to the tangent space $T_{m_\eta}\mathcal{M}$ and define
\begin{align*}
 \mathcal{D}_\eta = \overline{T_{m_\eta}\mathcal{M}}^{L^2} = \overline{\bigset{u \in
 H^2_N(\Omega,\setR^3) \, \big| \, u \cdot m_\eta = 0}}^{L^2} \,.
\end{align*}
\begin{lem}
 It holds
 %\begin{align*}
  $\mathcal{D}_\eta = \bigset{ u \in L^2(\Omega,\setR^3) \, \big| \, u \cdot m_\eta = 0 \text{ almost everywhere}}$,
 %\end{align*}
 and if $u$ belongs to $T_{m_\eta}\mathcal{M}$, then $\mathcal{L}^\eta u$ belongs to $\mathcal{D}_\eta$.
\end{lem}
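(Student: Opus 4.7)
I would prove the two claims separately.

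For the first identity, observe that the set on the right-hand side is the kernel of the bounded linear map $P:L^2(\Omega,\setR^3) \to L^2(\Omega)$, $u \mapsto u \cdot m_\eta$, which is bounded because $\abs{m_\eta}=1$ a.e. Hence it is a closed subspace containing $T_{m_\eta}\mathcal{M}$, which forces it to contain $\mathcal{D}_\eta$. For the reverse inclusion, given $v \in L^2(\Omega,\setR^3)$ with $v \cdot m_\eta = 0$, I would approximate $v$ in $L^2$ by a sequence $v_n \in H^2_N(\Omega,\setR^3)$ (using density of $H^2_N$ in $L^2$) and then project pointwise onto the orthogonal complement of $m_\eta$ by setting $\tilde{v}_n = v_n - (v_n \cdot m_\eta) m_\eta$. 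Using that $H^2_N(\Omega)$ is an algebra (via $H^2 \hookrightarrow L^\infty$ in dimension three, as already invoked in Lemma \ref{lem:manifold}) and that the Neumann condition is preserved by the projection (both $v_n$ and $m_\eta$ have vanishing normal derivative), each $\tilde{v}_n$ lies in $T_{m_\eta}\mathcal{M}$. Since $\abs{m_\eta}=1$ and $v \cdot m_\eta = 0$, one gets $\tilde{v}_n \to v - (v \cdot m_\eta) m_\eta = v$ in $L^2$, so $v \in \mathcal{D}_\eta$.

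For the second claim, let $u \in T_{m_\eta}\mathcal{M}$ and compute $(\mathcal{L}^\eta u) \cdot m_\eta$ pointwise. Terms of the form $(m_\eta \times w) \cdot m_\eta$ vanish by \eqref{eq:vector identities}, which immediately kills the second, sixth, eighth and ninth summands in \eqref{eq:linearization}, while $\abs{\nabla m_\eta}^2 u \cdot m_\eta = 0$ by tangentiality. Differentiating the identity $u \cdot m_\eta = 0$ twice and taking traces yields the pointwise relation $\Delta u \cdot m_\eta + 2\,\nabla u : \nabla m_\eta + u \cdot \Delta m_\eta = 0$, so the first and third summands combine to $-u \cdot \Delta m_\eta$. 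For the five remaining terms I would apply $(a\times b) \cdot c = -(c \times b) \cdot a$ and the double cross product identity in \eqref{eq:vector identities}, using $u \cdot m_\eta = 0$, to obtain
\begin{align*}
(\mathcal{L}^\eta u) \cdot m_\eta = -\,u \cdot \bigl(\Delta m_\eta + \eta^2 H[m_\eta]\bigr) - \alpha \, \bigl(m_\eta \times (\Delta m_\eta + \eta^2 H[m_\eta])\bigr) \cdot u.
\end{align*}
By the regular-minimizer property \eqref{eq:m is parallel to H effective}, the vector $\Delta m_\eta + \eta^2 H[m_\eta]$ is a scalar multiple of $m_\eta$; the cross product therefore vanishes, and the scalar-product term vanishes because $u \cdot m_\eta = 0$. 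This gives $(\mathcal{L}^\eta u) \cdot m_\eta \equiv 0$, so $\mathcal{L}^\eta u \in \mathcal{D}_\eta$ by the first claim.

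The main obstacle is the bookkeeping in the second assertion, since ten summands in \eqref{eq:linearization} must be organized into three groups (those killed by $(m_\eta \times \cdot)\cdot m_\eta = 0$, those combined via differentiation of the tangency condition, and those absorbed by the Euler--Lagrange structure). The first claim is routine once one verifies that the pointwise projection onto $m_\eta^\perp$ preserves $H^2_N$.
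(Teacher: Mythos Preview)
Your proposal is correct and follows essentially the same route as the paper: the characterization of $\mathcal{D}_\eta$ via pointwise projection onto $m_\eta^\perp$ of an approximating sequence, and the computation of $(\mathcal{L}^\eta u)\cdot m_\eta$ culminating in the same expression $-\,u\cdot(\Delta m_\eta+\eta^2 H[m_\eta]) - \alpha\,(m_\eta\times(\Delta m_\eta+\eta^2 H[m_\eta]))\cdot u$, which vanishes by \eqref{eq:m is parallel to H effective}. The only cosmetic differences are that the paper takes the approximating sequence in $C_0^\infty(\Omega,\setR^3)$ (so the Neumann condition for the projected sequence is automatic by compact support) rather than in $H^2_N$, and argues ``$\subset$'' via a.e.\ convergence of a subsequence rather than closedness of a kernel.
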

\begin{proof}
  ``$\subset$'': Let $u \in \mathcal{D}_\eta$ be given. We find a sequence $u_n \in T_{m_\eta}\mathcal{M}$ such that $u_n \to u$ in $L^2$. Moreover, we 
  can assume $u_n \to u$ almost everywhere in \nolinebreak $\Omega$. In particular, we obtain
  %\begin{align*}
    $u \cdot {m_\eta} = \lim_{n\to \infty} u_{n} \cdot {m_\eta} = 0$  almost everywhere in $\Omega$.
  %\end{align*}
  \\
  ``$\supset$'': Let $u \in L^2(\Omega, \setR^3)$ with $u \cdot {m_\eta} = 0$ be given. We find a sequence $u_n \in C_0^\infty(\Omega,\setR^3)$ such
  that $u_n \to u$ almost everywhere in $\Omega$ and in $L^2$. We define
  %\begin{align*}
    $v_n = u_n - (u_n \cdot {m_\eta}) \, {m_\eta}  \in T_{m_\eta}\mathcal{M}$
  %\end{align*}
  for $n \in \setN$ and find $v_n \to u$ in $L^2$, hence $u \in \mathcal{D}_\eta$.

  For the remaining statement,
  let $u \in T_{m_\eta}\mathcal{M}$ be given. Thanks to what we have just shown, it is enough to 
  check that
  $\mathcal{L}^\eta u \cdot {m_\eta} =0$. From \eqref{eq:linearization} we obtain
  \begin{align*}
    \mathcal{L}^\eta u \cdot {m_\eta} = \Delta u \cdot m_\eta + 2 \, \nabla u : \nabla m_\eta - \eta^2 \, u \cdot H[m_\eta] 
 %\\
 - \alpha \big( m_\eta \times 
\big(\Delta m_\eta + \eta^2 H[m_\eta]\big) \big)\cdot u \, ,
  \end{align*}
where we have used the vector identities \eqref{eq:vector identities}.
Because of $u \cdot {m_\eta} = 0$, we get
  \begin{align*}
    0 =\Delta ( u \cdot {m_\eta} ) =\Delta u \cdot {m_\eta} + u \cdot \Delta {m_\eta} + 2 \, \nabla{m_\eta} :  \nabla u \, ,
  \end{align*}
  hence
  \begin{align*}
    \mathcal{L}^\eta u \cdot {m_\eta} = - \big( \Delta {m_\eta} + \eta^2 \, H[{m_\eta}] \big) \cdot u -  \alpha \big( m_\eta \times 
\big(\Delta m_\eta + \eta^2 H[m_\eta]\big) \big)\cdot u \,.
  \end{align*}
The fact \eqref{eq:m is parallel to H effective} implies the statement of the lemma.
\end{proof}
Thanks to the previous lemma, we can define the linear operator $\mathcal{L}^\eta_0$ by
\begin{align*}
 \mathcal{L}^\eta_0 : T_{m_\eta}\mathcal{M} \subset \mathcal{D}_\eta \to \mathcal{D}_\eta : u
 \mapsto \mathcal{L}^\eta u \,.
\end{align*}
Moreover, we can show that $\mathcal{L}^\eta_0$ is sectorial:
\begin{lem}
\label{lem:L0 is sectorial}
 The linear operator $\mathcal{L}^\eta_0$ is sectorial and $e^{t \mathcal{L}^\eta_0} = {e^{t
 \mathcal{L}^\eta}}_{|\mathcal{D}_\eta}$.
\end{lem}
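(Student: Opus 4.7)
My plan is to deduce both conclusions from a single invariance fact: the closed subspace $\mathcal{D}_\eta$ will be shown to be preserved by the analytic semigroup $(e^{t\mathcal{L}^\eta})_{t\ge 0}$ generated by the already-sectorial operator $\mathcal{L}^\eta$. Once invariance is available, I will argue that the restrictions $e^{t\mathcal{L}^\eta}|_{\mathcal{D}_\eta}$ form a $C_0$-semigroup on $\mathcal{D}_\eta$ whose analyticity is inherited from that of $(e^{t\mathcal{L}^\eta})$ --- either directly from the Dunford-Cauchy integral representation or, equivalently, from the observation that $R(\lambda,\mathcal{L}^\eta)$ must then leave $\mathcal{D}_\eta$ invariant and hence restricts to a resolvent on $\mathcal{D}_\eta$ satisfying the same sectorial bound. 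The generator of the restricted semigroup is the part of $\mathcal{L}^\eta$ in $\mathcal{D}_\eta$, whose domain is $\bigset{u \in H^2_N : u \cdot m_\eta = 0 \text{ and } \mathcal{L}^\eta u \in \mathcal{D}_\eta}$; thanks to the previous lemma this coincides with $T_{m_\eta}\mathcal{M}$, and the generator is therefore $\mathcal{L}^\eta_0$. Sectoriality of $\mathcal{L}^\eta_0$ and the identification $e^{t\mathcal{L}^\eta_0} = e^{t\mathcal{L}^\eta}|_{\mathcal{D}_\eta}$ then follow at once.

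For the invariance step I would fix $u_0 \in T_{m_\eta}\mathcal{M}$ (the general case will follow by density of $T_{m_\eta}\mathcal{M}$ in $\mathcal{D}_\eta$ together with the $L^2$-continuity of the semigroup) and set $u(t) = e^{t\mathcal{L}^\eta} u_0$. Since $u_0 \in D(\mathcal{L}^\eta)$, $u$ is a strict solution of $u_t = \mathcal{L}^\eta u$, so the scalar function $g(t,x) := u(t,x) \cdot m_\eta(x)$ inherits $C^1$-regularity in time and $H^2_N$-regularity in space. Inserting the formula for $\mathcal{L}^\eta u \cdot m_\eta$ derived in the previous lemma into $g_t = u_t \cdot m_\eta$, and then using $\Delta(u \cdot m_\eta) = \Delta u \cdot m_\eta + 2\nabla u : \nabla m_\eta + u \cdot \Delta m_\eta$ together with the parallelism \eqref{eq:m is parallel to H effective} --- which, via the Euler-Lagrange equation \eqref{eq:Euler-Lagrange}, yields some $\psi \in L^\infty(\Omega)$ with $\Delta m_\eta + \eta^2 H[m_\eta] = \psi\, m_\eta$ and hence $m_\eta \times (\Delta m_\eta + \eta^2 H[m_\eta]) = 0$ --- the gyromagnetic and nonlocal pieces will collapse and $g$ is expected to satisfy the scalar parabolic problem
\begin{align*}
g_t = \Delta g - \psi g \text{ in } \Omega, \qquad \partial_\nu g = 0 \text{ on } \partial \Omega, \qquad g(0,\cdot) = 0 \,.
\end{align*}
A Gronwall estimate on $\tfrac{d}{dt}\norm{g}_{L^2}^2$, modelled on the computation in Lemma \ref{lem:punctual norm}, will then force $g\equiv 0$, i.e.\ $u(t) \in \mathcal{D}_\eta$ for every $t\ge 0$.

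The hard part will be precisely this scalar reduction. \emph{A priori}, pairing $\mathcal{L}^\eta u$ with $m_\eta$ leaves behind two potentially awkward contributions, namely $u \cdot (\Delta m_\eta + \eta^2 H[m_\eta])$ coming from the diffusive and nonlocal terms and $\alpha (m_\eta \times (\Delta m_\eta + \eta^2 H[m_\eta])) \cdot u$ coming from the precessional term; neither can be controlled by $g$ alone without additional structure. The saving grace is exactly the fact that $m_\eta$ is a regular minimizer of $E_{\text{res}}^\eta$: the Euler-Lagrange relation \eqref{eq:m is parallel to H effective} forces $\Delta m_\eta + \eta^2 H[m_\eta]$ to be proportional to $m_\eta$, which simultaneously annihilates the gyromagnetic contribution and collapses the diffusive/nonlocal contribution to a bounded multiplicative potential $\psi$. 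Once this algebraic cancellation is verified, the remainder of the argument reduces to routine semigroup theory and the identification of the part-of-$\mathcal{L}^\eta$ in $\mathcal{D}_\eta$ already supplied by the previous lemma.
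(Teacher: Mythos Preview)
Your approach is correct and rests on the same algebraic core as the paper's, but implements the invariance at the semigroup level rather than at the resolvent level. The paper argues directly with resolvents: it introduces the auxiliary scalar sectorial operator $\mathcal{A} v = \Delta v + 2\abs{\nabla m_\eta}^2 v$ on $L^2(\Omega)$, picks a sector contained in $\rho(\mathcal{L}^\eta) \cap \rho(\mathcal{A})$, and for $f \in \mathcal{D}_\eta$, $u = R(\lambda,\mathcal{L}^\eta) f$ verifies the identity $\mathcal{L}^\eta u \cdot m_\eta = \mathcal{A}(u \cdot m_\eta)$ for \emph{general} $u \in H^2_N$, whence $(\lambda - \mathcal{A})(u \cdot m_\eta) = 0$ and so $u \cdot m_\eta = 0$. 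Your route uses the same scalar reduction to show that $g(t) = e^{t\mathcal{L}^\eta}u_0 \cdot m_\eta$ solves a linear parabolic problem with zero data, then concludes by Gronwall and passes to the part of $\mathcal{L}^\eta$ in $\mathcal{D}_\eta$. The paper's version is marginally more direct (no density extension from $T_{m_\eta}\mathcal{M}$ to $\mathcal{D}_\eta$ is needed), while yours has the virtue of making the connection to Lemma~\ref{lem:punctual norm} explicit.

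Two small points. First, the formula for $\mathcal{L}^\eta u \cdot m_\eta$ in the preceding lemma was derived \emph{assuming} $u \cdot m_\eta = 0$, so you cannot literally insert it; you must redo the computation for general $u$, as you implicitly acknowledge by invoking the product rule for $\Delta(u\cdot m_\eta)$. Second, when you carry this out carefully the potential is $+2\abs{\nabla m_\eta}^2$, not $-\psi$: the contributions from $\abs{\nabla m_\eta}^2 (u\cdot m_\eta)$, $-u\cdot\Delta m_\eta$, and the triple cross product combine nontrivially, and the result is not simply the Lagrange multiplier. This does not affect your uniqueness argument, since either potential is in $L^\infty$, but the scalar equation in your sketch is not quite right.
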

\begin{proof}
 First of all, we prove the existence of a sector $S_{\theta,\omega} \subset \rho (\mathcal{L}^\eta)$ as in Definition \ref{def:sectorial} such that
\begin{align*}
 R(\lambda,\mathcal{L}^\eta) (D_\eta) \subset T_{m_\eta} \mathcal{M}
\end{align*}
for all $\lambda \in S_{\theta,\omega}$. For this we use that
%\begin{align*}
 $\mathcal{A} : H^2_N(\Omega) \subset L^2(\Omega) \to L^2(\Omega): v \mapsto \Delta v 
 + 2 \, \abs{\nabla m_\eta}^2 v$
%\end{align*}
defines a sectorial operator ($\mathcal{A} =$ Laplacian $+$ \textquotedblleft{}lower order term\textquotedblright{}). Since $\mathcal{L}^\eta$ is a sectorial operator as well, we can choose a sector $S_{\theta,\omega} \subset \rho(\mathcal{A}) \cap \rho(\mathcal{L}^\eta)$. Let now $\lambda \in S_{\theta,\omega}$ and $f \in
\mathcal{D}_\eta$ be given. We have to show that $u = R(\lambda,\mathcal{L}^\eta) f $ belongs to 
$T_{m_\eta}\mathcal{M}$. The definition of $u$ implies $\lambda u - \mathcal{L}^\eta u = f$, and multiplication with $m_\eta$ leads to
$\lambda u \cdot m_\eta - \mathcal{L}^\eta u \cdot m_\eta = 0$.
With the help of \eqref{eq:linearization}, we get
\begin{align*}
 \mathcal{L}^\eta u \cdot m_\eta =& \Delta u \cdot m_\eta + 2 \, \nabla u : \nabla m_\eta +\abs{\nabla m_\eta}^2 u \cdot m_\eta 
 - \eta^2 \big( u \times \big( m_\eta \times H[m_\eta] \big)
 \big) \cdot m_\eta  
 \\
 &+ \alpha ( u \times \Delta m_\eta ) \cdot m_\eta 
 + \alpha \, \eta^2 \big( u \times H[m_\eta] \big) \cdot m_\eta \,.
\end{align*}
Moreover, we use the identity $\,\Delta (u \cdot m_\eta) = \Delta u \cdot m_\eta + u \cdot \Delta m_\eta + 2 \, \nabla u : \nabla
 m_\eta\,$
and obtain
\begin{align*}
 \mathcal{L}^\eta u \cdot m_\eta =& \Delta (u \cdot m_\eta) + \abs{\nabla m_\eta}^2 u \cdot m_\eta 
 - u \cdot \Delta m_\eta + \eta^2 \big( m_\eta \times \big( m_\eta \times H[m_\eta] \big)
 \big) \cdot u  
 \\
 &- \alpha ( m_\eta \times \Delta m_\eta ) \cdot u - \alpha \, \eta^2 \big( m_\eta \times H[m_\eta] \big) \cdot u \,.
\end{align*}
From here we get
\begin{align*}
 \mathcal{L}^\eta u \cdot m_\eta =& \Delta (u \cdot m_\eta) + 2 \, \abs{\nabla m_\eta}^2 u 
 \cdot m_\eta + \big( m_\eta \times \big( m_\eta \times \big( \Delta m_\eta + \eta^2 H[m_\eta] \big) \big) \big) \cdot u 
\\
&- \alpha \big( m_\eta \times \big( \Delta m_\eta + \eta^2 H[m_\eta] \big) \big) \cdot u
\\
=& \Delta (u \cdot m_\eta) + 2 \, \abs{\nabla m_\eta}^2 u \cdot m_\eta \,,
\end{align*}
where we have used the identity \eqref{eq:identity for Laplacian} and the fact \eqref{eq:m is parallel to H effective}.
In particular, we have
%\begin{align*}
 $\lambda (u \cdot m_\eta) - \mathcal{A}(u \cdot m_\eta) = 0$,
%\end{align*}
hence $u \cdot m_\eta =0$ and $u \in T_{m_\eta} \mathcal{M}$.

From what we have shown above, we obtain $S_{\theta,\omega} \subset \rho(\mathcal{L}^\eta_0)$ and $R(\lambda,\mathcal{L}^\eta_0) = R(\lambda,\mathcal{L}^\eta)_{|\mathcal{D}_\eta}$ for all $\lambda \in S_{\theta,\omega}$. The required resolvent estimate is also satisfied, and we conclude that $\mathcal{L}^\eta_0$ is sectorial and $e^{t \mathcal{L}^\eta_0} = {e^{t
 \mathcal{L}^\eta}}_{|\mathcal{D}_\eta}$. The lemma is proved.
\end{proof}
We use Lemma \ref{lem:L0 is sectorial} in the following way: Assume for the moment that 
%\begin{align*}
$e^{T \mathcal{L}^\eta_0} - I : \mathcal{D}_\eta \to \mathcal{D}_\eta$
%\end{align*}
 is invertible. Then we also have that $e^{T \mathcal{L}^\eta} - I : T_{m_\eta}\mathcal{M} \to T_{m_\eta}\mathcal{M}$ is invertible. Indeed, since 
$e^{T \mathcal{L}^\eta_0} = {e^{T\mathcal{L}^\eta}}_{|\mathcal{D}_\eta}$, we 
see that $e^{T \mathcal{L}^\eta} - I : T_{m_\eta}\mathcal{M} \to T_{m_\eta}\mathcal{M}$ is injective. Moreover, for $f \in T_{m_\eta}\mathcal{M}$ we find a unique
$u \in \mathcal{D}_\eta$ such that $e^{T \mathcal{L}^\eta} u - u = f$. Thanks to the smoothing property of $e^{T \mathcal{L}^\eta}$, we obtain
%\begin{align*}
 $u = e^{T \mathcal{L}^\eta} u - f \in T_{m_\eta} \mathcal{M}$,
%\end{align*}
hence $e^{T \mathcal{L}^\eta} - I : T_{m_\eta}\mathcal{M} \to T_{m_\eta}\mathcal{M}$ is surjective.

Working with $e^{T \mathcal{L}^\eta_0}$ on $\mathcal{D}_\eta$ has the 
advantage that we can use the spectral mapping theorem for sectorial 
operators (see for example \cite[Corollary 2.3.7]{lunardi}). In particular we have that
\begin{align*}
  1 \not\in \sigma(e^{T \mathcal{L}^\eta_0}) \quad \Leftrightarrow \quad 
  \frac{2 k \pi \dot{\imath}}{T} \not\in \sigma(\mathcal{L}^\eta_0) 
\text{ for all } k \in \setZ\,.
\end{align*}
Furthermore, we know that the resolvents of $\mathcal{L}^\eta_0$ are compact due to the compact Sobolev embedding $H^2_N(\Omega) \hookrightarrow L^2(\Omega)$.
This shows that the spectrum $\sigma(\mathcal{L}^\eta_0)$ consists entirely of isolated eigenvalues with finite-dimensional eigenspaces (see for example \cite[Theorem 6.29]{Kato}), hence $\sigma(\mathcal{L}^\eta_0) = \sigma_P(\mathcal{L}^\eta_0)$.
In particular, for 
\begin{align*}
e^{T \mathcal{L}^\eta} - I : T_{m_\eta}\mathcal{M} \to T_{m_\eta}\mathcal{M}
\end{align*}
 being invertible, it is enough to check that $\sigma_P(\mathcal{L}^\eta_0) \cap \dot{\imath} \setR = \emptyset$.
In the subsequent sections, we prove the validity of this statement in two steps. First, we 
show that $\dot{\imath}t \not\in \sigma_P(\mathcal{L}^\eta_0)$ for every 
$t \in \setR \setminus \set{0}$, and then we prove that 
$0 \not \in \sigma_P(\mathcal{L}^\eta_0)$. It turns out that the latter statement requires a restriction on the shape of $\Omega$, and our particle -- or
equivalently $\eta$ -- has to be sufficiently small.

\section{Spectral analysis for $\mathcal{L}^\eta_0$ - First step}
\label{sec:spectral analysis - first step}
In this section we prove the following lemma:
\begin{lem}
\label{lem:first step}
 The linear operator $\mathcal{L}^\eta_0$ satisfies 
 $\sigma_P(\mathcal{L}^\eta_0) \cap \dot{\imath} \setR \setminus \set{0} = \emptyset$.
\end{lem}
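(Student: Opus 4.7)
My plan is to exploit an algebraic factorization of $\mathcal{L}^\eta_0$ that mirrors the Hamiltonian/dissipative structure of LLG. Concretely, for $u\in T_{m_\eta}\mathcal{M}$ I would first establish the identity
\begin{equation*}
 \mathcal{L}^\eta_0 u \;=\; -(I+\alpha J_\eta)\,\mathcal{H}_\eta u,
\end{equation*}
where $J_\eta u := m_\eta\times u$ and $\mathcal{H}_\eta u := P_\eta[-\Delta u-\eta^2 H[u]]+\mu\, u$, with $P_\eta = I-m_\eta\otimes m_\eta$ the pointwise projection onto the tangent plane of $m_\eta$ and $\mu := -\abs{\nabla m_\eta}^2+\eta^2\, m_\eta\cdot H[m_\eta]$. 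The operator $\mathcal{H}_\eta$ plays the role of the Hessian of $E^\eta_{\text{res}}$ restricted to $\mathcal{M}$ at $m_\eta$. The identity is obtained by rearranging \eqref{eq:linearization} using the vector identities \eqref{eq:vector identities}, the equality $\Delta m_\eta+\eta^2 H[m_\eta]=\mu\, m_\eta$ coming from \eqref{eq:m is parallel to H effective}, and the pointwise relation $\Delta(u\cdot m_\eta)=0$ which is valid for tangent $u$.

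Each of the two factors possesses a clean symmetry. Since $\abs{m_\eta}=1$, the map $J_\eta$ preserves $T_{m_\eta}\mathcal{M}$ and satisfies $J_\eta^2=-I$ there; hence $I+\alpha J_\eta$ is bijective on the tangent space with inverse $(1+\alpha^2)^{-1}(I-\alpha J_\eta)$. Moreover $J_\eta$ is pointwise skew-symmetric, so $\langle J_\eta u,u\rangle_{L^2}\in\dot{\imath}\,\setR$ for every complexified tangent vector $u$. On the other hand, integration by parts combined with the symmetry of the stray field operator (Lemma \ref{lem:stray field}) yields
\begin{equation*}
 \langle \mathcal{H}_\eta u,v\rangle_{L^2} \;=\; \int_\Omega \nabla u:\nabla v\, dx + \eta^2\int_{\setR^3}H[u]\cdot H[v]\, dx + \int_\Omega \mu\, u\cdot v\, dx
\end{equation*}
for $u,v\in T_{m_\eta}\mathcal{M}$, an expression that is manifestly real and symmetric in $u,v$. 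After complexification $\mathcal{H}_\eta$ is therefore self-adjoint, and $\langle\mathcal{H}_\eta u,u\rangle_{L^2}\in\setR$.

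With these two properties at hand the lemma reduces to one short step. Suppose $u$ is a nonzero element of the complexified tangent space with $\mathcal{L}^\eta_0 u = \dot{\imath}\, t\, u$ for some $t\in\setR$. Applying $(I+\alpha J_\eta)^{-1}$ turns the eigenvalue equation into
\begin{equation*}
 \mathcal{H}_\eta u \;=\; -\frac{\dot{\imath}\, t}{1+\alpha^2}\bigl(I-\alpha J_\eta\bigr)u.
\end{equation*}
Pairing with $u$ in $L^2$, the left-hand side is real while the right-hand side is $-\frac{\dot{\imath}\, t}{1+\alpha^2}\norm{u}_{L^2}^2 + \frac{\dot{\imath}\, t\alpha}{1+\alpha^2}\langle J_\eta u,u\rangle_{L^2}$. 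Because $\langle J_\eta u,u\rangle_{L^2}$ is purely imaginary, the imaginary part of the right-hand side reduces to $-\frac{t}{1+\alpha^2}\norm{u}_{L^2}^2$. Equating imaginary parts forces $t\,\norm{u}_{L^2}^2=0$, so $t=0$ as claimed.

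The only nontrivial step is the factorization; once it is in place the spectral statement is purely algebraic, so I do not anticipate a serious obstacle. It is worth noting that this argument never uses minimality of $m_\eta$ (only that it is a critical point), which is consistent with the fact that excluding the eigenvalue $0$ -- the content of the next section -- genuinely requires both minimality and the shape hypothesis on $\Omega$.
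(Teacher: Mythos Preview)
Your argument is correct and, at the computational level, coincides with the paper's: there one applies $(\alpha\,m_\eta\times\cdot - I)$ to the eigenvalue equation, expands $\mathcal{L}^\eta_0 u-\alpha\,m_\eta\times\mathcal{L}^\eta_0 u$ by hand, and checks that its $L^2$-pairing with $u$ is real while the remaining term contributes imaginary part $-t\norm{u}_{L^2}^2$. Your factorization $\mathcal{L}^\eta_0=-(I+\alpha J_\eta)\mathcal{H}_\eta$ is precisely the structural reason this works---applying $(I-\alpha J_\eta)$ on $T_{m_\eta}\mathcal{M}$ amounts to inverting $(I+\alpha J_\eta)$ up to the factor $1+\alpha^2$, and the paper's long expansion is nothing but the verification that $(1+\alpha^2)\mathcal{H}_\eta u$ is what remains. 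So the two proofs are the same in substance; your presentation has the advantage of isolating the symmetric/skew pair $(\mathcal{H}_\eta,J_\eta)$ explicitly and making clear, as you note, that only criticality of $m_\eta$ (not minimality) is used.
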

\begin{proof}
 Let $t \in \setR \setminus \set{0}$ be given. We have to show that $\,\dot{\imath}t -\mathcal{L}^\eta_0 : T_{m_\eta} \mathcal{M} \subset \mathcal{D}_\eta \to \mathcal{D}_\eta\,$
 is injective. Let therefore $u \in T_{m_\eta} \mathcal{M}$ be such that $0 = \dot{\imath}t u - \mathcal{L}^\eta_0 u$.
 We obtain
\begin{align*}
 0 &= ( \alpha m_\eta \times \cdot - I ) (\dot{\imath}t u - \mathcal{L}^\eta_0 u ) 
\end{align*}
and rewrite $\mathcal{L}^\eta_0 u$ with the help of \eqref{eq:vector identities} as follows:
\begin{equation}
\label{eq:identity for L0eta}
 \begin{split}
 \mathcal{L}^\eta_0 u =& \Delta u + \alpha \, m_\eta \times \Delta u 
+ 2 \, \nabla u : \nabla m_\eta \, m_\eta + \abs{\nabla m_\eta}^2 u  
+ \alpha \, u \times \Delta m_\eta 
+ \alpha \, \eta^2 \, m_\eta \times H[u] 
\\
&+ \alpha \, \eta^2 \, u \times H[m_\eta] 
- \eta^2 \, m_\eta \cdot H[u] \, m_\eta + \eta^2 H[u]
- \eta^2 m_\eta \cdot H[m_\eta] \, u 
- \eta^2  u \cdot H[m_\eta] \, m_\eta \,.
\end{split}
\end{equation}
Next, we calculate:
\begin{align*}
 \alpha m_\eta \times \mathcal{L}^\eta_0 u =& \alpha m_\eta \times \Delta u + \alpha^2 m_\eta \cdot
 \Delta u \, m_\eta - \alpha^2 \Delta u + \alpha \, \abs{\nabla m_\eta}^2 m_\eta \times u
 + \alpha^2 m_\eta \cdot \Delta m_\eta \, u 
 \\
 &+ \alpha^2 \eta^2 m_\eta \cdot H[u] \, m_\eta 
 -\alpha^2 \eta^2 H[u] + \alpha^2 \eta^2 m_\eta \cdot H[m_\eta] \, u + \alpha \eta^2 m_\eta \times H[u]
 \\
 &- \alpha \eta^2
 m_\eta \cdot H[m_\eta] \, m_\eta \times u \,.
\end{align*}
Furthermore, we find:
\begin{align*}
 \mathcal{L}^\eta_0 u- \alpha m_\eta \times \mathcal{L}^\eta_0 u
 =& (1+\alpha^2) \Delta u + (1+\alpha^2) \eta^2 H[u] - (1+\alpha^2) \eta^2 m_\eta \cdot H[m_\eta] \,
 u - (1+\alpha^2) \eta^2 m_\eta \cdot H[u] \, m_\eta 
 \\
 &- \alpha^2 m_\eta \cdot \Delta u \, m_\eta
 - \alpha^2 m_\eta \cdot \Delta m_\eta \, u
 + 2 \, \nabla u : \nabla m_\eta \, m_\eta + \abs{\nabla m_\eta}^2 \, u - \eta^2 u \cdot H[m_\eta]
 \, m_\eta
 \\
 &- \alpha \big( \Delta m_\eta + \abs{\nabla m_\eta}^2 \, m_\eta - \eta^2 m_\eta \cdot H[m_\eta] \, m_\eta 
 + \eta^2 H[m_\eta] \big) \times u \,.
\end{align*}
We observe that the last term is equal to zero thanks to the Euler-Lagrange equation \eqref{eq:Euler-Lagrange}. 
We now take the $L^2$-scalar product with $u$ and obtain
\begin{align*}
 \big( \mathcal{L}^\eta_0 u- \alpha m_\eta \times \mathcal{L}^\eta_0 u,u
\big)_{L^2} 
=& -(1+\alpha^2) \int_\Omega \abs{\nabla u}^2 \, dx+ (1+\alpha^2) \eta^2 \int_\Omega H[u] \cdot \overline{u} \, dx
 \\
 &- (1+\alpha^2) \eta^2 \hspace{-0.15cm}\int_\Omega \hspace{-0.1cm} m_\eta \cdot H[m_\eta] \, \abs{u}^2  dx- \alpha^2 \hspace{-0.15cm}\int_\Omega \hspace{-0.1cm} m_\eta \cdot \Delta
 m_\eta \, \abs{u}^2  dx+ \hspace{-0.15cm}\int_\Omega \abs{\nabla m_\eta}^2 \, \abs{u}^2  dx. 
\end{align*}
Since $H$ is $L^2$-symmetric (see Lemma \ref{lem:stray field}), we conclude $\big( \mathcal{L}^\eta_0 u- \alpha m_\eta \times \mathcal{L}^\eta_0 u,u\big)_{L^2}  \in \setR$.
We now decompose $u = u_1 + \dot{\imath} u_2$ into real and imaginary part and calculate:
\begin{align*}
 (\alpha m_\eta \times \cdot - I ) \, \dot{\imath} t u = \dot{\imath} \alpha t m_\eta \times u_1 - \alpha t m_\eta \times u_2 - \dot{\imath} t u_1 + t u_2 \,.
\end{align*}
Again, we take the $L^2$-scalar product with $u$ and find
\begin{align*}
 \big((\alpha m_\eta \times \cdot - I ) \, \dot{\imath} t u, u \big)_{L^2}  = -\dot{\imath} t \int_\Omega \abs{u}^2 \, dx
 + 2 \alpha t \int_\Omega  (m_\eta \times u_1) \cdot u_2 \, dx\,.
\end{align*}
In particular, we obtain
\begin{align*}
 0 = \text{Im} \big( ( \alpha m_\eta \times \cdot - I ) (\dot{\imath}t u - \mathcal{L}^\eta_0 u ),u \big)_{L^2} 
 = - \dot{\imath} t \int_\Omega \abs{u}^2 \, dx\, ,
\end{align*}
hence $u=0$. The lemma is proved.
\end{proof}

\section{Spectral analysis for $\mathcal{L}^\eta_0$ - Second step}
\label{sec:spectral analysis - second step}
In general we can not expect that $0 \not \in \sigma_{P}(\mathcal{L}^\eta_0)$ since symmetries of the domain \nolinebreak $\Omega$ lead naturally to a nontrivial kernel of $\mathcal{L}^\eta_0$. Assume for example that we can find a smooth path $R: \setR \to SO(3)$ such that $R(0)=I$ and $R(t)(\Omega) = \Omega$ for all $t \in \setR$. Then $m(t)$ defined by $m(t) = R(t) \circ m_\eta \circ R(t)^{T}$
is also a minimizer of $E_{\text{res}}^\eta$ for all $t\in \setR$. In particular, $m(t)$ is a stationary solution for the rescaled LLG with 
$h_\text{ext}=0$, and differentiation shows $w=\frac{d}{dt} m(t)_{|t=0} \in T_{m_\eta}\mathcal{M}$ and $\mathcal{L}^\eta_0 w =0$.
If $w \not=0$, then we conclude that $0 \in \sigma_{P}(\mathcal{L}^\eta_0)$. We also remark that, due to the non-local stray field $H$, it might be difficult to investigate the dimension of the kernel of $\mathcal{L}^\eta_0$, even for symmetric domains $\Omega$.

In order to rule out nontrivial zeros of $\mathcal{L}^\eta_0$, we need a restriction on the shape of $\Omega$. We can conveniently capture the relation between geometry and non-local stray field with the help of the demagnetizing tensor $\dt$ defined by
\begin{align*}
 \dt : \setR^3 \to \setR^3 : u \mapsto - \int_\Omega H[u] \, dx\,.
\end{align*}
From the definition we see that $\dt$ is linear, symmetric, and positive definite, and we can therefore write without loss of generality $\dt$ in diagonal form $\dt = \diag(\lambda_1,\lambda_2,\lambda_3)$
with positive eigenvalues $\lambda_1,\lambda_2,\lambda_3$. In the remaining chapter, we assume that the smallest eigenvalue $\lambda_1$ of $\dt$ is simple, that is $\lambda_1 < \lambda_2 \le \lambda_3$,
which, roughly speaking, means that the length of $\Omega$ is greater than its height and its width. For rotation ellipsoids, explicit formulas for the demagnetizing tensor are available (see \cite[Section 3.2.5]{HS}), and we see that the set of bounded $C^{2,1}$-domains with the property $ \lambda_1 < \lambda_2 \le \lambda_3$ is not empty.
The magnetic shape anisotropy as expressed above keeps minimizers  $m_\eta$  
of \nolinebreak $E_{\text{res}}^\eta$ close to the $e_1$-axis, provided $\eta$ is small enough. Before we prove this statement, we need a refined $L^2$-estimate for the gradient of $m_\eta$ and start with some notations:

We decompose an arbitrary function $u \in H^2_N(\Omega,\setR^3)$ by $u = \ui + \uii$, where
\begin{align}
 \label{eq:decomposition}
 \ui = \int_\Omega u \, dx\qquad \text{and} \qquad \uii = u - \int_\Omega u \, dx\,.
\end{align}
From the definition we find
\begin{equation}
 \label{eq:integral of uii}
 \int_\Omega \uii \, dx= 0 \qquad \text{and} \qquad \int_\Omega \ui \cdot \uii \, dx= 0 \,.
\end{equation}
This in particular implies that
\begin{equation}
 \label{eq:estimate for uii}
 \norm{\uii}_{L^2} \le C \, \norm{\nabla \uii}_{L^2} = C \, \norm{\nabla u}_{L^2}
\end{equation}
 and
 \begin{equation}
 \label{eq:estimate for u}
 \norm{u}_{L^2}^2 = \abs{\ui}^2 + \norm{\uii}_{L^2}^2 \le \abs{\ui}^2 
 + C \, \norm{\nabla u}_{L^2}^2 \,.
\end{equation}
Furthermore, we write $u = (u_1,u_2,u_3)$ and $\ui = (\vi,\vii,\viii)$.
\begin{lem}
  \label{lem:l2estimate}
  There exist positive constants $\eta_0=\eta_0(\Omega)$ and 
  $C_0=C_0(\Omega)$ such that $\norm{\nabla m_\eta}_{L^2} \le C_0  \eta^2$
  for every minimizer $m_\eta \in H^1(\Omega,S^2)$ of $E_{\text{res}}^\eta$ whenever $0<\eta\le\eta_0$.
\end{lem}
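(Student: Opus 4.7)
The plan is to combine two ingredients: the global energy inequality $E_{\text{res}}^\eta(m_\eta) \le E_{\text{res}}^\eta(e_1) = \eta^2 \lambda_1$ obtained by comparing $m_\eta$ to a constant competitor aligned with the easy axis, and the decomposition $m_\eta = \metai + \metaii$ of \eqref{eq:decomposition}, which interacts cleanly with $\dt$ and with the saturation constraint. Schematically, I would rewrite $\norm{H[m_\eta]}_{L^2(\setR^3)}^2$ in terms of $\metai$ and $\metaii$, bound it from below by $\lambda_1$ up to corrections controlled by $\norm{\metaii}_{L^2}$, and then close the argument via the Poincaré-type estimate \eqref{eq:estimate for uii}.

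\textbf{Step 1 (energy inequality).} The constant $e_1 \in H^1(\Omega,S^2)$ is admissible, and $e_1$ is an eigenvector of $\dt$ corresponding to the smallest eigenvalue $\lambda_1$. Minimality of $m_\eta$ thus gives
\begin{align*}
 \norm{\nabla m_\eta}_{L^2}^2 + \eta^2 \norm{H[m_\eta]}_{L^2(\setR^3)}^2 \le \eta^2 \, \dt e_1 \cdot e_1 = \eta^2 \lambda_1,
\end{align*}
so it suffices to show $\lambda_1 - \norm{H[m_\eta]}_{L^2(\setR^3)}^2 \le C \bigl(\norm{\metaii}_{L^2} + \norm{\metaii}_{L^2}^2\bigr)$.

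\textbf{Step 2 (stray field expansion).} Using linearity of $H$ and Lemma \ref{lem:stray field} (symmetry plus the self-pairing identity) I would expand
\begin{align*}
 \norm{H[m_\eta]}_{L^2(\setR^3)}^2 = \dt \metai \cdot \metai - 2\, \metai \cdot w + \norm{H[\metaii]}_{L^2(\setR^3)}^2, \qquad w := \int_\Omega H[\metaii] \, dx,
\end{align*}
where the first term uses that $\metai$ is constant and hence $-\int_\Omega H[\metai] \, dx = \dt \metai$ by definition of the demagnetizing tensor. Integrating the saturation constraint $\abs{m_\eta}^2 = 1$ over $\Omega$, using $\abs{\Omega}=1$ and \eqref{eq:integral of uii}, yields $\abs{\metai}^2 = 1 - \norm{\metaii}_{L^2}^2$, so $\dt \metai \cdot \metai \ge \lambda_1 \bigl(1 - \norm{\metaii}_{L^2}^2\bigr)$. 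For the cross term, Lemma \ref{lem:stray field} combined with Cauchy--Schwarz yields $\abs{w} \le C \norm{\metaii}_{L^2}$, while $\abs{\metai} \le 1$. Dropping the nonnegative last term, this produces
\begin{align*}
 \norm{H[m_\eta]}_{L^2(\setR^3)}^2 \ge \lambda_1 - \lambda_1 \norm{\metaii}_{L^2}^2 - 2C \norm{\metaii}_{L^2}.
\end{align*}

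\textbf{Step 3 (closing via Poincaré; main obstacle).} Substituting into Step 1 and invoking \eqref{eq:estimate for uii} in the form $\norm{\metaii}_{L^2} \le C_P \norm{\nabla m_\eta}_{L^2}$, with $x := \norm{\nabla m_\eta}_{L^2}$, one arrives at the scalar inequality $x^2 \le \eta^2 \lambda_1 C_P^2 \, x^2 + 2 C C_P \eta^2 \, x$. For $\eta$ sufficiently small so that $\eta^2 \lambda_1 C_P^2 \le \tfrac{1}{2}$, the quadratic term is absorbed into the left-hand side, leaving $x \le C_0 \eta^2$ as required. The only delicate point is the cross term $-2 \metai \cdot w$: a priori it is \emph{linear} rather than quadratic in $\norm{\metaii}_{L^2}$, and one might fear that this prevents improving on the naive rate $\norm{\nabla m_\eta}_{L^2} = O(\eta)$ coming from Step 1 alone. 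The crucial observation is that Poincaré converts this single linear power into one factor of $x$, which is absorbed against $x^2$ to yield precisely the sharper $\eta^2$ rate.
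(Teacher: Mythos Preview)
Your argument is correct, but it proceeds differently from the paper's. The paper does not compare $m_\eta$ with $e_1$; instead it uses the normalized mean $v=\metai/\abs{\metai}$ as the competitor. Minimality then gives
\[
\norm{\nabla m_\eta}_{L^2}^2 \le \eta^2 \int_{\setR^3} H\Big[\frac{\metai}{\abs{\metai}}-m_\eta\Big]\cdot H\Big[\frac{\metai}{\abs{\metai}}+m_\eta\Big]\,dx \le 2\eta^2 \Bignorm{\frac{\metai}{\abs{\metai}}-m_\eta}_{L^2},
\]
and both $\bignorm{\metai/\abs{\metai}-\metai}_{L^2}$ and $\norm{\metai-m_\eta}_{L^2}$ are bounded by $C\norm{\nabla m_\eta}_{L^2}$ via Poincar\'e together with the identity $1-\abs{\metai}^2=\norm{\metaii}_{L^2}^2$. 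So the paper never invokes the eigenstructure of $\dt$ at this stage; the stray-field energies cancel up to a factor that is already linear in $\norm{\nabla m_\eta}_{L^2}$.

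Your route fixes the competitor at $e_1$ and instead expands $\norm{H[m_\eta]}_{L^2(\setR^3)}^2$ through $\dt$, extracting the gain from the spectral lower bound $\dt\metai\cdot\metai\ge\lambda_1\abs{\metai}^2$ combined with the same identity for $\abs{\metai}^2$. This is in fact precisely the expansion the paper deploys later, in the proof of Lemma~\ref{lem:orientation}. Your version has the minor advantage of avoiding the normalization step (and hence the preliminary check that $\abs{\metai}\neq 0$), and it makes the role of $\dt$ explicit; the paper's version is agnostic about the spectral decomposition and would work verbatim without having diagonalized $\dt$. Both arguments close by the identical absorption $x^2\le C\eta^2 x$.
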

\begin{proof}
 Let $m_\eta = \metai + \metaii$ be a minimizer of $E_{\text{res}}^\eta$. The saturation constraint $\abs{m_\eta}=1$ implies
  \begin{align*}
    1 = \abs{\metai}^2 + 2 \, \metai \cdot \metaii + \abs{\metaii}^2 \, ,
  \end{align*}
  and by integration we obtain together with \eqref{eq:integral of uii} the identity
  \begin{equation}
   \label{eq:identity for ui and uii}
    1 = \abs{\metai}^2 + \int_\Omega \abs{\metaii}^2 \, dx\,.
  \end{equation}
  With the help of the \Poincare{} inequality and \eqref{eq:regularity}, we see 
  that $\abs{1 - \abs{\metai}^2} \le C  \eta^2$
  for $\eta$ small enough.
  Therefore, we can introduce the constant comparison function
  %\begin{align*}
    $v = \metai/\abs{\metai} \in H^1(\Omega,S^2)$.
  %\end{align*}
  Since $m_\eta$ is a minimizer, we find $E_{\text{res}}^\eta (m_\eta) \le E_{\text{res}}^\eta (v)$
  and rewrite this inequality as follows:
  \begin{align*}
    \int_\Omega \abs{\nabla m_\eta}^2 \, dx\le \eta^2 \int_{\setR^3} \Bigabs{ H\Big[\frac{\metai}{\abs{\metai}}\Big]}^2 \, dx
    - \eta^2 \int_{\setR^3} \abs{H[m_\eta]}^2 \, dx
   = \eta^2 \int_{\setR^3} H\Big[\frac{\metai}{\abs{\metai}} - m_\eta\Big] \cdot H\Big[ \frac{\metai}{\abs{\metai}} + m_\eta\Big] \, dx\,.
  \end{align*}
  With the help of the H{\"o}lder inequality, we obtain
  \begin{align*}
    \int_\Omega \abs{\nabla m_\eta}^2 \, dx& \le \eta^2 \Bignorm{H\Big[\frac{\metai}{\abs{\metai}} - m_\eta\Big]}_{L^2} \, \Bignorm{H\Big[\frac{\metai}{\abs{\metai}} 
      + m_\eta\Big]}_{L^2}
    %\\
    %\le \eta^2 \Bignorm{\frac{\metai}{\abs{\metai}} - m_\eta}_{L^2} \, \Bignorm{\frac{\metai}{\abs{\metai}} + m_\eta}_{L^2}
    %\\
     \le 2 \eta^2 \Bignorm{\frac{\metai}{\abs{\metai}} - m_\eta}_{L^2} \,.
  \end{align*}
  Moreover, we have the identity
  \begin{align*}
    \Bignorm{\frac{\metai}{\abs{\metai}} - \metai}_{L^2}^2 &= \int_\Omega \Bigabs{\frac{\metai}{\abs{\metai}} - \metai}^2 \, dx
    %= \int_\Omega \big(1-2 \, \abs{\metai} + \abs{\metai}^2 \big)
    = \int_\Omega \big( 1 - \abs{\metai} \big)^2 \, dx\, ,
  \end{align*}
  and since $\abs{\metai} \le 1$, we can estimate as follows:
  \begin{align*}
    \big( 1 - \abs{\metai} \big)^2 = 1-2 \, \abs{\metai} + \abs{\metai}^2 \le 1 - 2 \, \abs{\metai}^2 + \abs{\metai}^2 = 1 - \abs{\metai}^2 \,.
  \end{align*}
  This together with the \Poincare{} inequality shows that
  \begin{align*}
    \Bignorm{\frac{\metai}{\abs{\metai}} - \metai}_{L^2}^2  \le \int_\Omega \big(1 - \abs{\metai}^2\big) \, dx= \int_\Omega \abs{m_\eta - \metai}^2\, dx
     \le C \int_\Omega \abs{\nabla m_\eta}^2 \, dx\, .
  \end{align*}
 We conclude
  \begin{align*}
    \norm{\nabla m_\eta}_{L^2}^2 & \le 2  \eta^2 \Bignorm{\frac{\metai}{\abs{\metai}} - \metai}_{L^2} + 2  \eta^2 \norm{ \metai - m_\eta }_{L^2}
    %\\
     \le C  \eta^2 \norm{\nabla m_\eta}_{L^2} + C  \eta^2 \norm{\nabla m_\eta}_{L^2}
    %\\
     = C \eta^2 \norm{\nabla m_\eta}_{L^2} \, .
  \end{align*}
 The lemma is proved.
\end{proof}
By interpolation between $L^2$ and $L^\infty$, we obtain from Lemma \ref{lem:l2estimate} and \eqref{eq:regularity} the following corollary:
\begin{corollary}
  \label{cor:l4estimate}
  There are positive constants $\eta_0=\eta_0(\Omega)$ and $C_0=C_0(\Omega)$ with the following property: For every $2 \le p \le \infty$, we have the estimate
 % \begin{align*}
    $\norm{\nabla m_\eta}_{L^p} \le C_0  \eta^{1+\frac{2}{p}}$
 % \end{align*}
  whenever $m_\eta \in H^1(\Omega,S^2)$ is a minimizer of $E_{\text{res}}^\eta$ with parameter $0< \eta \le \eta_0$.
\end{corollary}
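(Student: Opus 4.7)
The plan is to combine the two endpoint estimates already available: Lemma~\ref{lem:l2estimate} supplies the $L^2$-bound $\norm{\nabla m_\eta}_{L^2}\le C_0\eta^2$, while the regularity statement \eqref{eq:regularity} supplies the $L^\infty$-bound $\norm{\nabla m_\eta}_{L^\infty}\le C_0\eta$. For $p\in[2,\infty]$ we then apply the standard Lebesgue interpolation inequality
\[
\norm{f}_{L^p}\le \norm{f}_{L^2}^{2/p}\,\norm{f}_{L^\infty}^{1-2/p},
\]
which is obtained at once by writing $|f|^p=|f|^2\,|f|^{p-2}$ and pulling the second factor out in $L^\infty$.

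Substituting $f=\nabla m_\eta$ and inserting the two endpoint bounds gives
\[
\norm{\nabla m_\eta}_{L^p}\le \bigl(C_0\eta^2\bigr)^{2/p}\bigl(C_0\eta\bigr)^{1-2/p}=C_0\,\eta^{4/p}\,\eta^{1-2/p}=C_0\,\eta^{1+2/p},
\]
which is the claimed estimate. The constant $C_0$ and the threshold $\eta_0$ are inherited from Lemma~\ref{lem:l2estimate} and \eqref{eq:regularity}; there is no need to re-solve any variational problem. The endpoint cases $p=2$ and $p=\infty$ are just the two input estimates themselves, and the inequality degenerates continuously in $p$.

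There is essentially no obstacle: the work was already done in proving the refined $L^2$-estimate via the comparison function $\metai/|\metai|$, and the $L^\infty$-bound is part of the regularity theorem quoted from \cite{alex_regularity,alex_diss}. The only mild point to keep in mind is that the two constants $C_0$ and thresholds $\eta_0$ coming from the two sources need to be taken as the smaller/larger of the two respectively, which is absorbed in the single notation $C_0=C_0(\Omega)$, $\eta_0=\eta_0(\Omega)$ in the statement.
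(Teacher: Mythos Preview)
Your argument is correct and is exactly the approach the paper takes: the corollary is stated immediately after Lemma~\ref{lem:l2estimate} with the remark ``By interpolation between $L^2$ and $L^\infty$, we obtain from Lemma~\ref{lem:l2estimate} and \eqref{eq:regularity} the following corollary,'' and no further proof is given. Your write-up simply makes this interpolation explicit.
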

We can now show that minimizers of $E_{\text{res}}^\eta$ stay close to the $e_1$-axis, provided the parameter $\eta$ is small enough.
\begin{lem}
  \label{lem:orientation}
  There exist positive constants $\eta_0=\eta_0(\Omega)$ and 
  $C_0=C_0(\Omega)$ such that we either have $\norm{m_\eta-e_1}_{L^\infty} \le C_0 \eta$ or $\norm{m_\eta+e_1}_{L^\infty} \le C_0 \eta$
  for every minimizer $m_\eta \in H^1(\Omega,S^2)$ of the rescaled energy functional $E_{\text{res}}^\eta$ whenever $0< \eta \le \eta_0$.
\end{lem}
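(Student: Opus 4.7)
The strategy is to test the minimality of $m_\eta$ against the constant competitor $e_1 \in S^2$, which corresponds to the smallest eigenvalue of the demagnetizing tensor, and then to exploit the spectral gap $\lambda_1 < \lambda_2 \le \lambda_3$. Combining Lemma \ref{lem:stray field} with the definition of $\dt$, I first observe that for any constant vector $a \in \setR^3$,
\[
 \int_{\setR^3} \abs{H[a]}^2 \, dx = - \int_\Omega H[a] \cdot a \, dx = \dt a \cdot a,
\]
so that $E_{\text{res}}^\eta(e_1) = \eta^2 \lambda_1$. Minimality of $m_\eta$ therefore yields $\int_{\setR^3} \abs{H[m_\eta]}^2 \, dx \le \lambda_1$ (after discarding the non-negative exchange term).

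Next, I would decompose $m_\eta = \metai + \metaii$ as in \eqref{eq:decomposition}. Since $\metai$ is constant, Lemma \ref{lem:stray field} gives the clean expansion
\[
 \int_{\setR^3} \abs{H[m_\eta]}^2 \, dx = \dt \metai \cdot \metai - 2 \int_\Omega H[\metai] \cdot \metaii \, dx + \int_{\setR^3} \abs{H[\metaii]}^2 \, dx.
\]
The last term is non-negative and can be discarded; the cross term is estimated by Cauchy-Schwarz together with the $L^2$-boundedness of $H$ and the refined bound $\norm{\metaii}_{L^2} \le C \norm{\nabla m_\eta}_{L^2} \le C \eta^2$ that comes from \eqref{eq:estimate for uii} and Lemma \ref{lem:l2estimate}. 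This yields $\dt \metai \cdot \metai \le \lambda_1 + C \eta^2$. Writing $\metai = (a_1, a_2, a_3)$, combining with the constraint $a_1^2 + a_2^2 + a_3^2 = 1 - O(\eta^4)$ from \eqref{eq:identity for ui and uii}, and subtracting $\lambda_1$ times that identity, I arrive at
\[
 (\lambda_2 - \lambda_1) a_2^2 + (\lambda_3 - \lambda_1) a_3^2 \le C \eta^2.
\]
The strict inequalities $\lambda_1 < \lambda_2$ and $\lambda_1 < \lambda_3$ then force $\abs{a_2}, \abs{a_3} \le C \eta$, and hence $a_1^2 \ge 1 - C \eta^2$. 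This dichotomy --- either $a_1 \ge 1 - C \eta^2$ or $a_1 \le -(1 - C \eta^2)$ --- is exactly the source of the two alternatives in the lemma, and in each case one has $\abs{\metai \mp e_1} \le C \eta$ for the appropriate sign.

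The $L^\infty$ closeness of the full $m_\eta$ to $\pm e_1$ then follows by adding the oscillation bound $\norm{\metaii}_{L^\infty} \le C \norm{\nabla m_\eta}_{L^\infty} \le C \eta$, which is valid on the smooth bounded domain $\Omega$ thanks to the $C^{1,\gamma}$-regularity \eqref{eq:regularity} and the vanishing mean of $\metaii$. I expect the main technical subtlety to be precisely the handling of the non-local cross term $\int_\Omega H[\metai] \cdot \metaii \, dx$: the naive $L^\infty$-bound $\norm{\nabla m_\eta}_{L^\infty} \le C \eta$ alone would only give $\norm{\metaii}_{L^2} = O(\eta)$, which is too weak to isolate the leading-order anisotropy $(\lambda_2 - \lambda_1) a_2^2 + (\lambda_3 - \lambda_1) a_3^2$. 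It is the refined $O(\eta^2)$ bound from Lemma \ref{lem:l2estimate} that makes the shape-anisotropy gap genuinely pin down the orientation of $\metai$.
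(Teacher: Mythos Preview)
Your proposal is correct and follows essentially the same route as the paper: test minimality against $e_1$, decompose $m_\eta=\metai+\metaii$, use the spectral gap $\lambda_1<\lambda_2\le\lambda_3$ together with the $O(\eta^2)$ bound from Lemma~\ref{lem:l2estimate} on the cross term to pin down $\metai$ near $\pm e_1$, and then add an $L^\infty$ oscillation bound for $\metaii$. The only cosmetic difference is that the paper obtains $\norm{m_\eta-\metai}_{L^\infty}$ via the Sobolev embedding $W^{1,6}\hookrightarrow L^\infty$ combined with the Poincar\'e inequality, whereas you invoke an $L^\infty$ Poincar\'e inequality directly from the $C^{1,\gamma}$ regularity; both are valid on a bounded $C^{2,1}$-domain.
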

\begin{proof}
 Let $m_\eta$ be a minimizer of $E_{\text{res}}^\eta$. For convenience we write $m= m_\eta$ and use
 again the decomposition $m = \mi + \mii$ from \eqref{eq:decomposition}.
 We choose the constant comparison function $e_1 \in H^1(\Omega,S^2)$ and employ the definition of \nolinebreak $\dt$ combined with Lemma \ref{lem:stray field} to find that
 \begin{align*}
  \int_{\setR^3} \abs{H[m]}^2 \, dx\le \frac{1}{\eta^2} E_{\text{res}}^\eta(m) \le \frac{1}{\eta^2}
  E_{\text{res}}^\eta(e_1) = - \int_\Omega H[e_1] \cdot e_1 \, dx= \dt e_1 \cdot
  e_1 = \lambda_1 \,.
 \end{align*}
  It follows that
  \begin{align*}
    \lambda_1 
    %&\ge - \int_\Omega H[\mi + \mii] \cdot (\mi + \mii)
     &\ge - \int_\Omega H[\mi]\cdot \mi \, dx- \int_\Omega H[\mi] \cdot \mii \, dx- \int_\Omega H[\mii] \cdot \mi \, dx- \int_\Omega H[\mii] \cdot \mii\, dx
    \\
    & = \dt \mi \cdot \mi - 2 \int_\Omega H[\mi] \cdot \mii \, dx+ \int_{\setR^3} \abs{H[\mii]}^2\, dx
    \\
    &\ge \dt \mi \cdot \mi - 2 \int_\Omega H[\mi] \cdot \mii \, dx\,.
  \end{align*}
  From here we obtain
  \begin{align*}
    \lambda_1 &\ge \lambda_1 \wi^2 + \lambda_2 \wii^2 + \lambda_3 \wiii^2 - 2 \int_\Omega H[\mi] \cdot \mii\, dx
    %\\
    \ge \lambda_1 \wi^2 + \lambda_2 ( \wii^2 + \wiii^2 ) - 2 \int_\Omega H[\mi] \cdot \mii\, dx
  \end{align*}
  because of $\lambda_3 \ge \lambda_2$. Due to the saturation constraint $\abs{m}=1$, we can use \nolinebreak \eqref{eq:identity for ui and uii}, and therefore we find
  \begin{align*}
    \lambda_1 &\ge (\lambda_1 - \lambda_2) \wi^2 + \lambda_2 - \lambda_2 \int_\Omega \abs{\mii}^2 \, dx- 2 \int_\Omega H[\mi] \cdot \mii \, dx\,.
  \end{align*}
  With Lemma \ref{lem:l2estimate} and the \Poincare{} inequality, we obtain the estimate
  \begin{align*}
    (\lambda_2 - \lambda_1 ) \, \abs{1- \wi^2} &\le \lambda_2 \int_\Omega \abs{\mii}^2 \, dx+ 2 \int_\Omega H[\mi] \cdot \mii\, dx
    %\\
     \le C \norm{\nabla m}_{L^2}^2 + 2 \norm{H[\mi]}_{L^2} \, \norm{\mii}_{L^2}
    %\\
    %\le  C \norm{\nabla m}_{L^2}^2 + C  \norm{\nabla m}_{L^2}
    %\\
     \le C  \eta^2 \,.
  \end{align*}
  Since $\lambda_1 < \lambda_2$, we conclude
  $\abs{1- \wi^2} \le C  \eta^2$ for $\eta$ small enough. We now have to distinguish the cases $\wi \ge 0$ and $\wi < 0$.
  If $\wi \ge 0$, then we get
  \begin{align*}
    \abs{\mi - e_1}^2 &= ( 1 - \wi)^2 + \wii^2 + \wiii^2
    %\\
    \le  1 - \wi^2 + 1 - \wi^2 - \int_\Omega \abs{\mii}^2\, dx
    %\\
    \le C \eta^2 \,.
  \end{align*}
   We make use of the Sobolev embedding $W^{1,6}(\Omega,\setR^3) \hookrightarrow
   L^\infty(\Omega,\setR^3)$, the \Poincare{} inequality, and \eqref{eq:regularity} to see that
  \begin{align*}
   \norm{m - \mi}_{L^\infty} &\le C \norm{m - \mi}_{W^{1,6}}
  %\\
  \le C \big( \norm{m- \mi}_{L^6} + \norm{\nabla m}_{L^6} \big)
  %\\
  \le C \norm{\nabla m}_{L^6}
  %\\
  \le C \eta
  \end{align*}
  for $\eta$ small enough. We now conclude
  \begin{align*}
    \norm{m - e_1}_{L^\infty} &\le \norm{m - \mi}_{L^\infty} + \abs{\mi - e_1} \le C_0 \eta \,.
  \end{align*}
  Similarly, we find $ \norm{m + e_1}_{L^\infty} \le  C_0 \eta\,$ if $\,\wi < 0$. The lemma is proved.
\end{proof}
The statement $0 \not\in \sigma_{P}(\mathcal{L}^\eta_0)$ is a consequence of the estimates established in the next two lemmas. In the sequel we assume without loss of generality that $m_\eta = (m^\eta_1,m^\eta_2,m^\eta_3)$ satisfies
\begin{equation}
 \label{eq:assumptions for m}
 \norm{\nabla m_\eta}_{L^p} \le C_0 \eta^{1+\frac{2}{p}} \qquad \text{and} \qquad 
 \norm{m_\eta - e_1}_{L^\infty} \le C_0 \eta
\end{equation}
for $2 \le p \le \infty$ and $\eta$ small enough (replace $m_\eta$ by $-m_\eta$ if necessary). We now state the first estimate:
\begin{lem}
 \label{lem:first estimate}
 There exist positive constants $\eta_0=\eta_0(\Omega)$ and $C=C(\Omega)$ such that
 \begin{align*}
  (-\mathcal{L}^\eta_0 u,u)_{L^2}  \ge &(1 - C \eta 
  - C \abs{\alpha} \eta^{\frac{1}{2}}) \norm{\nabla u}_{L^2}^2
 %\\ 
 + \eta^2 \big( (\lambda_2 - \lambda_1) \abs{\ui}^2 - \alpha (\lambda_3 - \lambda_2)\vii \viii 
 - C \eta \abs{\ui}^2 - C \abs{\alpha} \eta^{\frac{1}{2}} \abs{\ui}^2 \big)
 \end{align*}
 for every $u \in T_{m_\eta}\mathcal{M}$ and every $0<\eta\le\eta_0$.
\end{lem}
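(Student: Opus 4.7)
My plan is to compute $(-\mathcal{L}^\eta_0 u,u)_{L^2}$ directly from \eqref{eq:linearization}, exploit the pointwise tangency $u \cdot m_\eta = 0$ together with the vector identities \eqref{eq:vector identities} to eliminate or simplify most of the ten terms, and then extract the advertised leading contributions from the two stray-field pieces via the demagnetizing tensor $\dt$. All remaining errors will be controlled by H\"older and Young with exponents tailored to the decay rates in \eqref{eq:assumptions for m}.

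The first step is purely algebraic. Four of the ten terms in $\mathcal{L}^\eta u$ drop out when paired with $u$: the factor $m_\eta \cdot u$ kills $2(\nabla u : \nabla m_\eta) m_\eta$, and $(a\times b)\cdot a = 0$ kills the three terms of the form $u\times(\cdot)$. The two remaining double cross products simplify via $a\times(b\times c)=(a\cdot c)b-(a\cdot b)c$ (again using $u\cdot m_\eta=0$) to $(m_\eta\times(m_\eta\times H[u]))\cdot u=-H[u]\cdot u$ and $(m_\eta\times(u\times H[m_\eta]))\cdot u=(m_\eta\cdot H[m_\eta])|u|^2$. Integration by parts against $-\Delta u$ with Neumann boundary conditions, together with Lemma \ref{lem:stray field} applied to $-\int_\Omega H[u]\cdot u = \int_{\setR^3}|H[u]|^2$, then gives
\[
(-\mathcal{L}^\eta_0 u, u)_{L^2} = \|\nabla u\|_{L^2}^2 + \eta^2\int_{\setR^3}|H[u]|^2\,dx + \eta^2\int_\Omega m_\eta \cdot H[m_\eta]\,|u|^2\,dx - \int_\Omega |\nabla m_\eta|^2\,|u|^2\,dx - \alpha\int_\Omega(m_\eta\times\Delta u)\cdot u\,dx - \alpha\eta^2\int_\Omega(m_\eta\times H[u])\cdot u\,dx.
\]

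For the three non-geometric terms, the $|\nabla m_\eta|^2|u|^2$-piece is $\le C\eta^2(|\ui|^2+\|\nabla u\|_{L^2}^2)$ by \eqref{eq:assumptions for m} and \eqref{eq:estimate for u}. For the gyromagnetic Laplacian, one integration by parts converts $\int(m_\eta\times\Delta u)\cdot u$ into $-\int(\nabla m_\eta\times\nabla u)\cdot u$ (the other piece $\int(m_\eta\times\nabla u):\nabla u$ vanishes by antisymmetry of the Levi-Civita symbol against the symmetric tensor $\partial_j u_i\,\partial_j u_l$); H\"older in $L^4\times L^2\times L^4$ with Corollary \ref{cor:l4estimate} and the interpolation $\|u\|_{L^4}\le|\ui|+C\|\nabla u\|_{L^2}$ (from $H^1\hookrightarrow L^6$, Poincar\'e, and \eqref{eq:estimate for uii}) give $C|\alpha|\eta^{3/2}\|\nabla u\|_{L^2}(|\ui|+\|\nabla u\|_{L^2})$, and the Young splitting $\eta^{3/2}|\ui|\,\|\nabla u\|_{L^2}\le\tfrac12\eta^{1/2}\|\nabla u\|_{L^2}^2+\tfrac12\eta^{5/2}|\ui|^2$ deposits everything in the advertised slack. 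For the gyromagnetic stray-field term I split $m_\eta=e_1+(m_\eta-e_1)$ and $u=\ui+\uii$ inside $\int(m_\eta\times H[u])\cdot u$: the only non-absorbable piece is $\int_\Omega(e_1\times H[\ui])\cdot\ui\,dx = \ui\cdot(e_1\times(-\dt\ui)) = (\lambda_3-\lambda_2)\vii\viii$, which after multiplication by $-\alpha\eta^2$ is exactly the cross term in the statement; each remaining piece carries either $\|m_\eta-e_1\|_{L^\infty}\le C\eta$ or $\|\uii\|_{L^2}\le C\|\nabla u\|_{L^2}$, and the same Young splitting absorbs it.

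The geometric heart is combining the two leading stray-field contributions. By Lemma \ref{lem:stray field} and $u=\ui+\uii$,
\[
\int_{\setR^3}|H[u]|^2\,dx = \dt\ui\cdot\ui - 2\int_\Omega H[\ui]\cdot\uii\,dx + \int_{\setR^3}|H[\uii]|^2\,dx \;\ge\; \dt\ui\cdot\ui - C|\ui|\,\|\nabla u\|_{L^2},
\]
and the same lemma with \eqref{eq:assumptions for m} gives $\int_\Omega m_\eta\cdot H[m_\eta]\,dx=-\int_{\setR^3}|H[m_\eta]|^2\,dx=-\lambda_1+O(\eta)$; using the pointwise bound $|m_\eta\cdot H[m_\eta]|\le\|H[m_\eta]\|_{L^\infty}\le C$ (from \cite[Lemma 2.3]{carboufabrie} plus Sobolev) one finds $\eta^2\int m_\eta\cdot H[m_\eta]|u|^2=-\eta^2\lambda_1|\ui|^2+(\text{admissible errors})$. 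The crucial input is the tangency $\int u\cdot m_\eta\,dx=0$, which combined with $|\mi-e_1|\le C\eta$ and $\|\mii\|_{L^2}\le C\eta^2$ (Lemma \ref{lem:l2estimate}) forces $|\vi|\le C\eta(|\ui|+\|\nabla u\|_{L^2})$. Since $\dt\ui\cdot\ui-\lambda_1|\ui|^2 = (\lambda_2-\lambda_1)\vii^2+(\lambda_3-\lambda_1)\viii^2\ge(\lambda_2-\lambda_1)(|\ui|^2-\vi^2)$, the two leading contributions produce $\eta^2(\lambda_2-\lambda_1)|\ui|^2$ up to an $O(\eta^4)(|\ui|^2+\|\nabla u\|_{L^2}^2)$ correction, safely absorbed.

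The principal obstacle is disciplined bookkeeping: each of the many mixed terms from the expansions $m_\eta=e_1+O(\eta)$ and $u=\ui+\uii$ must be paired with enough excess powers of $\eta$ or of $\|\uii\|_{L^2}\le C\|\nabla u\|_{L^2}$ to be split by Young into the two admissible buckets $C(\eta+|\alpha|\eta^{1/2})\|\nabla u\|_{L^2}^2$ and $C(\eta^3+|\alpha|\eta^{5/2})|\ui|^2$. The specific exponents $\tfrac12$ and $\tfrac52$ on the $|\alpha|$-side are dictated precisely by $\|\nabla m_\eta\|_{L^4}\le C\eta^{3/2}$ from Corollary \ref{cor:l4estimate}; once one verifies that every perturbation really does carry an extra factor $\eta$ or $\|\nabla u\|_{L^2}/|\ui|$ before one applies Young, the estimate closes.
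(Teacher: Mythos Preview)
Your proposal is correct and follows essentially the same route as the paper: the same algebraic reduction via tangency and \eqref{eq:vector identities}, the same integration by parts for the gyromagnetic Laplacian, the same $m_\eta=e_1+(m_\eta-e_1)$ and $u=\ui+\uii$ decompositions for the stray-field terms, and the same use of the tangency to bound $|\vi|$ and thus extract $(\lambda_2-\lambda_1)|\ui|^2$ from $\dt\ui\cdot\ui-\lambda_1|\ui|^2$. Your handling of the $m_\eta\cdot H[m_\eta]\,|u|^2$ term via the $L^\infty$ bound on $H[m_\eta]$ is a slight variant of the paper's explicit bilinear expansion, but both work.

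One small slip: for $\int|\nabla m_\eta|^2|u|^2$ you claim the bound $C\eta^2(|\ui|^2+\|\nabla u\|_{L^2}^2)$, which would contribute $C\eta^2|\ui|^2$ --- the \emph{same} order as the leading gain $\eta^2(\lambda_2-\lambda_1)|\ui|^2$, hence not absorbable into the stated error $C\eta^3|\ui|^2$. You must use the $L^4$ bound from Corollary~\ref{cor:l4estimate} here (as you do for the gyromagnetic Laplacian), i.e.\ $\|\nabla m_\eta\|_{L^4}^2\|u\|_{L^4}^2\le C\eta^3(|\ui|^2+\|\nabla u\|_{L^2}^2)$, which is exactly what the paper does.
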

\begin{proof}
 Let $u \in T_{m_\eta}\mathcal{M}$ be given. 
 For convenience we write $\int = \int_\Omega$ and find with the help of \eqref{eq:identity for L0eta} the identity
\begin{align*}
 (-\mathcal{L}^\eta_0 u, u)_{L^2}  =& \int \abs{\nabla u}^2 \, dx- \alpha \int (m_\eta \times
 \Delta u) \cdot u \, dx- \int \abs{\nabla m_\eta}^2 \abs{u}^2 \, dx
 %\\
 - \alpha \, \eta^2 \int ( m_\eta \times H[u] ) \cdot u \, dx
 \\
 &- \eta^2 \int H[u] \cdot u \, dx
 %\\
 + \eta^2 \int m_\eta \cdot H[m_\eta] \abs{u}^2\, dx
\\
=& \norm{\nabla u}_{L^2}^2 - I_1 - I_2 - I_3 - I_4 - I_5 \,.
\end{align*}
In the following we analyze each term separately. We remark that $\norm{\nabla u}_{L^2}^2$ and $I_4$ are our good terms, and because of $I_3$, we have to do some extra work in form of Lemma \ref{lem:second estimate}. For $I_1$ we get by integration by parts that
\begin{align*}
 I_1 %& \alpha \int (m_\eta \times \Delta u) \cdot u
 %\\
 =& \alpha \int u_1 ( \nabla m^\eta_3 \cdot \nabla u_2 - \nabla m^\eta_2 \cdot \nabla u_3) \, dx
  + \alpha \int u_2 ( \nabla m^\eta_1 \cdot \nabla u_3 - \nabla m^\eta_3 \cdot \nabla u_1)\, dx
 \\
 &+ \alpha \int u_3 ( \nabla m^\eta_2 \cdot \nabla u_1 - \nabla m^\eta_1 \cdot \nabla u_2) \, dx\,.
\end{align*}
With the help of the H{\"o}lder inequality, the embedding $H^1(\Omega) \hookrightarrow L^4(\Omega)$, \eqref{eq:estimate for u}, and 
\eqref{eq:assumptions for m}, we obtain the estimate
\begin{align*}
 I_1 &\le C \abs{\alpha} \,\norm{\nabla m_\eta}_{L^4} \norm{u}_{L^4} \norm{\nabla u}_{L^2}
 %\\
 %&\le C \abs{\alpha} \, \eta^{1+\frac{1}{2}} ( \norm{u}_{L^2} 
 %+ \norm{\nabla u}_{L^2} ) \norm{\nabla u}_{L^2}
 %\\
 %\le C \abs{\alpha} \,\eta^{1+\frac{1}{2}} ( \abs{\ui} + C \norm{\nabla u}_{L^2} ) \norm{\nabla u}_{L^2}
 %\\
 \le C \abs{\alpha} \,\eta^{1+\frac{1}{4}} \abs{\ui} \,\eta^{\frac{1}{4}} \norm{\nabla u}_{L^2} 
 + C \abs{\alpha} \,\eta^{1+\frac{1}{2}} \norm{\nabla u}_{L^2}^2 \,.
\end{align*}
We now use the Young inequality and end up with
\begin{align*}
 I_1 &\le C \abs{\alpha} \,\eta^{2+\frac{1}{2}} \abs{\ui}^2 +  C \abs{\alpha} \,\eta^{\frac{1}{2}} 
 \norm{\nabla u}_{L^2}^2 + C \abs{\alpha} \,\eta^{1+\frac{1}{2}} \norm{\nabla u}_{L^2}^2
 %\\
 \le C \abs{\alpha} \,\eta^{2+\frac{1}{2}} \abs{\ui}^2 + C \abs{\alpha} \,\eta^{\frac{1}{2}} 
 \norm{\nabla u}_{L^2}^2 \,.
\end{align*}
Similarly, we can estimate $I_2$ as follows:
\begin{align*}
 I_2 
 %&= \int \abs{\nabla m_\eta}^2 \abs{u}^2
 \le \norm{\nabla m_\eta}_{L^4}^2 \norm{u}_{L^4}^2
 \le C \eta^3 ( \norm{u}_{L^2}^2 + \norm{\nabla u}_{L^2}^2)
 \le C \eta^3 \abs{\ui}^2 + C \eta^3 \norm{\nabla u}_{L^2}^2 \,.
\end{align*}
Next, we split $I_3$ into two terms:
\begin{align*}
 I_3 &= \alpha \, \eta^2 \int (m_\eta \times H[u]) \cdot u \, dx
%\\
= \alpha \, \eta^2 \int \big( (m_\eta - e_1) \times H[u] \big) \cdot u \, dx+ \alpha \, \eta^2 \int (e_1 \times H[u]) \cdot u\, dx
%\\
= I_3^1 + I_3^2 \,.
\end{align*}
For the first term, we find with \eqref{eq:estimate for u} and \eqref{eq:assumptions for m} the estimate
\begin{align*}
 I_3^1 &\le \abs{\alpha} \, \eta^2 \norm{m_\eta - e_1}_{L^\infty} \norm{H[u]}_{L^2} \norm{u}_{L^2}
 %\\
 \le C \abs{\alpha} \, \eta^3 \norm{u}_{L^2}^2
 %\\
 \le C \abs{\alpha} \, \eta^3 \abs{\ui}^2 + C \abs{\alpha} \, \eta^3 \norm{\nabla u}_{L^2}^2 \,.
\end{align*}
For the second term, we use the decomposition $u = \ui + \uii$ from \eqref{eq:decomposition} and obtain:
\begin{align*}
 I_3^2 %=& \alpha \, \eta^2 \int ( e_1 \times H[\ui + \uii] ) \cdot (\ui + \uii)
 %\\
 =& \alpha \, \eta^2 \int ( e_1 \times H[\ui] ) \cdot \ui \, dx
   + \alpha \, \eta^2 \int (e_1 \times H[\ui]) \cdot \uii \, dx
 %\\
 + \alpha \, \eta^2 \int (e_1 \times H[\uii] ) \cdot \ui \, dx
 \\  
&+ \alpha \, \eta^2 \int ( e_1 \times H[\uii]) \cdot \uii \, dx.
\end{align*}
The definition of the demagnetizing tensor $\dt$, the H{\"o}lder inequality, and the estimate \eqref{eq:estimate for uii} imply
\begin{align*}
 I_3^2 %&\le -\alpha \, \eta^2 (e_1 \times \dt \ui ) \cdot \ui 
 %+ 2 \abs{\alpha} \, \eta^2 \abs{\ui} \norm{\uii}_{L^2} + \abs{\alpha} \, \eta^2 \norm{\uii}_{L^2}^2
 %\\
 &\le -\alpha \, \eta^2 (e_1 \times \dt \ui ) \cdot \ui + C \abs{\alpha} 
 \, \eta^{1+\frac{1}{2}} \abs{\ui}
 \, \eta^{\frac{1}{2}} \norm{\nabla u}_{L^2} + C \abs{\alpha} \, \eta^2 \norm{\nabla u}_{L^2}^2 \,.
\end{align*}
Again, the Young inequality leads to the estimate
\begin{align*}
 I_3^2 &\le -\alpha \, \eta^2 (e_1 \times \dt \ui ) \cdot \ui + C \abs{\alpha} \,\eta^3 \abs{\ui}^2 
 + C \abs{\alpha} \,\eta \norm{\nabla u}_{L^2}^2 \,.
\end{align*}
We observe that $e_1 \times \dt\ui = (0, -\lambda_3 \viii, \lambda_2 \vii)$
% \begin{align*}
%  e_1 \times \dt\ui = 
%  \left(
%  \begin{array}{c}
%   1\\0\\0
%  \end{array}
%  \right)
%  \times
%  \left(
%  \begin{array}{c}
%   \lambda_1 \vi \\ \lambda_2 \vii \\ \lambda_3 \viii
%  \end{array}
%  \right)
%  = 
%  \left(
%  \begin{array}{c}
%   0  \\ -\lambda_3 \viii \\ \hphantom{-}\lambda_2 \vii
%  \end{array}
%  \right) \, ,
% \end{align*}
hence
%\begin{align*} 
$(e_1 \times \dt \ui) \cdot \ui = -(\lambda_3 -\lambda_2) \vii \viii$.
%\end{align*}
We conclude
\begin{align*}
 I_3 \le \alpha \, \eta^2 ( \lambda_3 - \lambda_2) \vii \viii 
 + C \abs{\alpha} \, \eta^3 \abs{\ui}^2 
 + C \abs{\alpha} \, \eta \norm{\nabla u}_{L^2}^2 \,.
\end{align*}
For $I_4$ we obtain the estimate
\begin{align*}
 I_4 %&= \eta^2 \int H[\ui + \uii] \cdot (\ui + \uii)
 %\\
 &= \eta^2 \int H[\ui] \cdot \ui \, dx+ 2 \eta^2 \int H[\ui] \cdot \uii \, dx+ \eta^2 \int H[\uii] \cdot \uii\, dx
 %\\
 %\le -\eta^2 \dt\ui \cdot \ui + 2 \eta^2 \abs{\ui}\,\norm{\uii}_{L^2} 
 %- \eta^2 \int_{\setR^3} \abs{H[\uii]}^2
 %\\
 \le -\eta^2 \dt\ui \cdot \ui + C \eta^3 \abs{\ui}^2 + C \eta \norm{\nabla u}_{L^2}^2 \,.
\end{align*}
We rewrite $I_5$ as follows 
\begin{align*}
 I_5%=& -\eta^2 \int(m_\eta - e_1 + e_1 ) \cdot H[m_\eta - e_1 + e_1 ] \abs{u}^2
 %\\
 =& -\eta^2 \int (m_\eta - e_1) \cdot H[m_\eta - e_1] \abs{u}^2 \, dx- \eta^2 \int (m_\eta - e_1) \cdot
 H[e_1] \abs{u}^2\, dx
 %\\
 - \eta^2 \int e_1 \cdot H[m_\eta - e_1] \abs{u}^2 \, dx
 \\
 &- \eta^2 \int e_1 \cdot H[e_1] \abs{u}^2\, dx
\end{align*}
and use the H{\"o}lder inequality to find
\begin{align*}
 I_5 \le& \eta^2 \norm{m_\eta - e_1}_{L^\infty} \norm{m_\eta - e_1}_{L^2} \norm{u}_{L^4}^2 
 + \eta^2 \norm{m_\eta - e_1}_{L^\infty} \norm{H[e_1]}_{L^2} \norm{u}_{L^4}^2
 %\\
 + \eta^2 \norm{m_\eta - e_1}_{L^2} \norm{u}_{L^4}^2 
 \\
 &- \eta^2 \int e_1 \cdot H[e_1] \abs{u}^2 \, dx\,.
\end{align*}
We obtain with the help of the embedding $H^1(\Omega) \hookrightarrow L^4(\Omega)$
that
\begin{align*}
 I_5 \le& C \eta^3 ( \norm{u}_{L^2}^2 + \norm{\nabla u}_{L^2}^2) 
 - \eta^2 \int e_1 \cdot H[e_1] \abs{u}^2\, dx
 %\\
 \le C \eta^3 \abs{\ui}^2 + C \eta^3 \norm{\nabla u}_{L^2}^2 
 - \eta^2 \int e_1 \cdot H[e_1] \abs{u}^2 \, dx\,.
\end{align*}
Moreover, we observe that
\begin{align*}
 - \eta^2 \int e_1 \cdot H[e_1] \abs{u}^2 \, dx
 %\\
 %=&- \eta^2 \int e_1 \cdot H[e_1] ( \abs{\ui}^2 + 2 \ui \cdot \uii + \abs{\uii}^2 )
 %\\
 =& \eta^2 \, \dt e_1 \cdot e_1 \, \abs{\ui}^2- 2 \eta^2 \int e_1 \cdot H[e_1] \ui \cdot \uii \, dx
 - \eta^2 \int e_1 \cdot H[e_1] \abs{\uii}^2\, dx \,.
\end{align*}
From here we get together with the H{\"o}lder inequality, Young inequality, and the Sobolev embedding $H^1(\Omega) \hookrightarrow L^4(\Omega)$ the estimate
\begin{align*}
 - \eta^2 \int e_1 \cdot H[e_1] \abs{u}^2 \, dx&\le \eta^2  \lambda_1 \abs{\ui}^2 + C \eta^2 
  \abs{\ui}\, \norm{\uii}_{L^2} + \eta^2 \norm{\uii}_{L^4}^2
 %\\
 %&\le \eta^2  \lambda_1 \abs{\ui}^2 + C \eta^{1+\frac{1}{2}} \, \abs{\ui} \, \eta^{\frac{1}{2}} 
 %\norm{\nabla u}_{L^2} + C \eta^2 \norm{\nabla u}_{L^2}^2
%\\
\le \eta^2  \lambda_1 \abs{\ui}^2 + C \eta^3 \abs{\ui}^2 + C \eta \norm{\nabla u}_{L^2}^2 \,.
\end{align*}
We conclude
%\begin{align*}
 $I_5 \le \eta^2 \lambda_1 \abs{\ui}^2 + C \eta^3 \abs{\ui}^2 + C \eta \norm{\nabla u}_{L^2}^2$.
%\end{align*}
Summarizing, we have shown so far that
\begin{align*}
 (-\mathcal{L}^\eta_0 u,u)_{L^2}  \ge& ( 1 - C \eta - C \abs{\alpha} \, \eta^{\frac{1}{2}} ) 
  \norm{\nabla u}_{L^2}^2
 %\\
 + \eta^2 ( \dt\ui \cdot \ui - \lambda_1 \abs{\ui}^2 - \alpha (\lambda_3 - \lambda_2) \vii \viii -
 C \eta \abs{\ui}^2 
 \\
 &- C \abs{\alpha} \, \eta ^{\frac{1}{2}} \abs{\ui}^2 ) 
\end{align*}
for all $u \in T_{m_\eta}\mathcal{M}$. Moreover, we can write $\dt \ui \cdot \ui = \sum_{i=1}^3 \lambda_i \, \overline{u}_i^2$, hence
  \begin{align*}
    \dt \ui \cdot \ui - \lambda_1  \abs{\ui}^2 = \sum_{i=1}^3 \lambda_i \, \overline{u}_i^2 
    - \lambda_1 \sum_{i=1}^3 \overline{u}_i^2
     \ge (\lambda_2 - \lambda_1) \sum_{i=2}^3 \overline{u}_i^2 \,.
  \end{align*}
  Because of $u \cdot m_\eta = 0$ and \eqref{eq:assumptions for m}, we have that
  \begin{align*}
    \abs{\ui_1} = \left|\int u \cdot e_1 \, dx\right|= \left|\int u \cdot (e_1 - m_\eta) \, dx\right|\le \norm{u}_{L^2} \, 
    \norm{e_1-m_\eta}_{L^2} 
    \le C \eta \, \norm{u}_{L^2} \, ,
  \end{align*}
  thus
  %\begin{align*}
    $\ui_1^2 \le C \eta^2 \abs{\ui}^2 + C \eta^2 \norm{\nabla u}_{L^2}^2$.
  %\end{align*}
  We obtain the estimate
  \begin{align*}
    \dt \ui \cdot \ui - \lambda_1 \, \abs{\ui}^2 %&\ge (\lambda_2 - \lambda_1) \sum_{i=2}^3 \ui_i^2
    %\\
    &= (\lambda_2 -\lambda_1) \, \abs{\ui}^2 - (\lambda_2 - \lambda_1) \, \ui_1^2
    %\\
    \ge  (\lambda_2 -\lambda_1) \, \abs{\ui}^2 - C  \eta^2 \, \abs{\ui}^2 
   - C  \eta^2 \norm{\nabla u}_{L^2}^2 \,.
  \end{align*}
 Finally, this shows
 \begin{align*}
 (-\mathcal{L}^\eta_0 u,u)_{L^2}  \ge& ( 1 - C \eta - C \abs{\alpha} \, \eta^{\frac{1}{2}} ) 
 \norm{\nabla u}_{L^2}^2
 \\
 &+ \eta^2 \big( (\lambda_2 - \lambda_1) \abs{\ui}^2 - \alpha (\lambda_3 - \lambda_2) \vii \viii -
 C \eta \abs{\ui}^2 - C \abs{\alpha} \, \eta ^{\frac{1}{2}} \abs{\ui}^2 \big)
 \end{align*}
 for all $u \in T_{m_\eta}\mathcal{M}$ and $\eta$ small enough. The lemma is proved.
\end{proof}
To state the second estimate, we introduce for a given $u \in T_{m_\eta}\mathcal{M}$ the test function $w[u] \in H^2_N(\Omega,\setR^3)$
defined by
%\begin{align*}
  $w[u]= (0, -u_3,u_2)$.
%\end{align*}
It is easily seen that
%\begin{align*}
$u \cdot w[u] = 0$ and $\int_\Omega \nabla u : \nabla w[u]\, dx = 0$
%\end{align*}
for every $u \in T_{m_\eta}\mathcal{M}$.
\begin{lem}
\label{lem:second estimate}
 There exist positive constants $\eta_0=\eta_0(\Omega)$ and $C=C(\Omega)$ such that
 \begin{align*}
  (-\mathcal{L}^\eta_0 u, \alpha w[u])_{L^2}  &\ge 
  \alpha \, \eta^2 (\lambda_3 - \lambda_2) \vii \viii
  -C (\alpha^2 + \abs{\alpha}) \eta^{\frac{1}{2}} \norm{\nabla u}_{L^2}^2
  %\\
  -C (\alpha^2 + \abs{\alpha} ) \eta^{2+\frac{1}{2}} \abs{\ui}^2
 \end{align*}
 for every $u \in T_{m_\eta}\mathcal{M}$ and every $0<\eta\le\eta_0$.
\end{lem}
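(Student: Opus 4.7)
The plan is to pair $-\mathcal{L}^\eta_0 u$ with $\alpha\, w[u]$ in $L^2$, expand the first factor via \eqref{eq:identity for L0eta}, and handle each of the eleven resulting integrals. The two orthogonalities enjoyed by $w[u]$---pointwise $u \cdot w[u] = 0$ and $\int_\Omega \nabla u : \nabla w[u]\, dx = 0$---will greatly simplify the analysis. After integration by parts (using $\partial_\nu u = 0$) the Laplacian contribution $-\Delta u \cdot \alpha w[u]$ yields $\alpha \int_\Omega \nabla u : \nabla w[u]\, dx = 0$; the pointwise orthogonality kills the terms proportional to $u$, namely those from $\abs{\nabla m_\eta}^2 u$ and $\eta^2 m_\eta \cdot H[m_\eta]\, u$; and because $e_1 \cdot w[u] = 0$, every factor $m_\eta \cdot w[u]$ that appears later is automatically $(m_\eta - e_1) \cdot w[u]$, which by Lemma \ref{lem:orientation} picks up a factor $\eta$.

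The main contribution comes from the stray field term $-\alpha \eta^2 \int_\Omega H[u] \cdot w[u]\, dx$. Using the splitting $u = \ui + \uii$ from \eqref{eq:decomposition}, the dominant piece is $\int_\Omega H[\ui] \cdot w[\ui]\, dx$. Since both $\ui$ and $w[\ui] = (0,-\viii,\vii)$ are constant vectors, the definition of $\dt$ gives $\int_\Omega H[\ui] \cdot w[\ui]\, dx = -w[\ui] \cdot \dt\, \ui$, and with $\dt = \diag(\lambda_1,\lambda_2,\lambda_3)$ a direct computation yields $w[\ui] \cdot \dt\, \ui = (\lambda_3 - \lambda_2) \vii \viii$. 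This produces the desired leading term $\alpha \eta^2 (\lambda_3 - \lambda_2) \vii \viii$. The three cross terms involving $\uii$ are controlled by Lemma \ref{lem:stray field} and the \Poincare{} inequality \eqref{eq:estimate for uii}, then absorbed by Young.

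The most delicate integral is $-\alpha^2 \int_\Omega (m_\eta \times \Delta u) \cdot w[u]\, dx$, since $\norm{\Delta u}_{L^2}$ is not a priori controlled by $\norm{\nabla u}_{L^2}$. The key trick is to use \eqref{eq:vector identities} to rewrite $(m_\eta \times \Delta u) \cdot w[u] = \Delta u \cdot (w[u] \times m_\eta)$ and integrate by parts. Splitting $m_\eta = e_1 + (m_\eta - e_1)$, the leading part becomes $\alpha^2 \int_\Omega \nabla u : \nabla(w[u] \times e_1)\, dx$, and since $w[u] \times e_1 = (0,u_2,u_3)$ a coordinate computation shows this equals $\alpha^2 \int_\Omega (\abs{\nabla u_2}^2 + \abs{\nabla u_3}^2)\, dx \geq 0$, which may be discarded in the lower bound. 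The error from the replacement $m_\eta \to e_1$ combines $\norm{m_\eta - e_1}_{L^\infty} \leq C \eta$ with $\norm{\nabla m_\eta}_{L^4} \leq C \eta^{3/2}$ from Corollary \ref{cor:l4estimate}, and, after Young, fits into $C\alpha^2 \eta^{1/2} \norm{\nabla u}_{L^2}^2 + C\alpha^2 \eta^{2+1/2} \abs{\ui}^2$.

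The remaining lower-order contributions---carrying $\nabla m_\eta$, $\Delta m_\eta$, $H[m_\eta]$, or of the form $\alpha^2 \eta^2 (m_\eta \times H[u]) \cdot w[u]$ and $\alpha^2 \eta^2 (u \times H[m_\eta]) \cdot w[u]$---are all handled by the same toolkit: exploit $e_1 \cdot w[u] = 0$ to extract a factor $m_\eta - e_1$ of order $\eta$ where possible, apply H\"older with the $L^p$-bounds \eqref{eq:assumptions for m}, use the embedding $H^1 \hookrightarrow L^4$ together with \eqref{eq:estimate for u} to replace $\norm{u}_{L^4}$ by $\abs{\ui} + \norm{\nabla u}_{L^2}$, and finish with Young. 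In my view the main obstacle is not any single hard estimate but the careful bookkeeping of $\eta$ and $\alpha$ powers across all eleven terms, ensuring each negative contribution collapses into the prescribed form $C(\alpha^2 + \abs{\alpha})\eta^{1/2}\norm{\nabla u}_{L^2}^2 + C(\alpha^2 + \abs{\alpha})\eta^{2+1/2}\abs{\ui}^2$; the Laplacian cross-term in particular must be attacked via integration by parts rather than direct H\"older estimation.
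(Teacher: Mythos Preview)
Your overall strategy matches the paper's: expand via \eqref{eq:identity for L0eta}, kill three terms using the orthogonalities $u\cdot w[u]=0$ and $\int_\Omega\nabla u:\nabla w[u]\,dx=0$, and identify $-\alpha\eta^2\int_\Omega H[u]\cdot w[u]\,dx$ as the source of the leading term $\alpha\eta^2(\lambda_3-\lambda_2)\vii\viii$. Your treatment of the Laplacian cross term is also correct; the paper reaches the same conclusion via the slightly slicker identity $m_\eta\times w[u]=-m^\eta_1\,u$ (a consequence of $m_\eta\cdot u=0$), which after integration by parts gives $-\alpha^2\int_\Omega m^\eta_1\abs{\nabla u}^2\,dx\le 0$ in one stroke.

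There is, however, a genuine gap in your handling of the two contributions $-\alpha^2\eta^2\int_\Omega(m_\eta\times H[u])\cdot w[u]\,dx$ and $-\alpha^2\eta^2\int_\Omega(u\times H[m_\eta])\cdot w[u]\,dx$. You label them ``lower-order'' and propose to absorb them into the error via H\"older and Young, but this does not work: neither the trick $e_1\cdot w[u]=0$ nor the replacement $m_\eta\to e_1$ gains a power of $\eta$ here. After reducing to the constant pieces $\ui,\uwi$, the second term gives
\[
-\alpha^2\eta^2\int_\Omega(\ui\times H[e_1])\cdot\uwi\,dx
=\alpha^2\eta^2\,(\ui\times\dt e_1)\cdot\uwi
=-\alpha^2\eta^2\,\lambda_1(\vii^2+\viii^2),
\]
which is of order $\alpha^2\eta^2\abs{\ui}^2$ and \emph{cannot} be absorbed into $C(\alpha^2+\abs{\alpha})\eta^{5/2}\abs{\ui}^2$ for small $\eta$. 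What rescues the estimate is that the companion term contributes $+\alpha^2\eta^2(\lambda_3\viii^2+\lambda_2\vii^2)$ at leading order, and the \emph{sum}
\[
\alpha^2\eta^2\big((\lambda_3-\lambda_1)\viii^2+(\lambda_2-\lambda_1)\vii^2\big)\ge 0
\]
is nonnegative precisely by the standing shape assumption $\lambda_1<\lambda_2\le\lambda_3$; it may then be discarded in the lower bound. So these two terms must be computed to leading order and combined, exactly as you did for the Laplacian cross term; they cannot simply be bounded in absolute value.
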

\begin{proof}
 Let $u \in T_{m_\eta}\mathcal{M}$ be given. For convenience we write $w=w[u]$, 
 $\int = \int_\Omega$ and find from \eqref{eq:identity for L0eta} the identity
 \begin{align*}
  &(-\mathcal{L}^\eta_0u,\alpha w)_{L^2}  
 \\
 =&- \alpha^2 \int (m_\eta \times \Delta u) \cdot w \, dx
  - 2 \alpha \int \nabla u :
 \nabla m_\eta \,\, m_\eta \cdot w \, dx
 %\\ 
 - \alpha^2 \int (u \times \Delta m_\eta ) \cdot w \, dx
\\ 
&- \alpha^2 \, \eta^2 \int (m_\eta \times H[u]) \cdot w\, dx
 - \alpha^2 \, \eta^2 \int ( u \times H[m_\eta] ) \cdot w \, dx
  + \alpha \,\eta^2 \int m_\eta \cdot H[u] \, m_\eta \cdot w\, dx
 \\
 &- \alpha \,\eta^2 \int H[u] \cdot w \, dx+ \alpha \, \eta^2 \int u \cdot H[m_\eta] \, m_\eta \cdot w\, dx
 \\
 =& -I_1 - I_2 -I_3 -I_4 -I_5 - I_6 -I_7 -I_8 \,.
%   (-\mathcal{L}^\eta_0u,\alpha w)_{L^2}  =& - \alpha^2 \int (m_\eta \times \Delta u) \cdot w 
%   - 2 \alpha \int \nabla u :
%  \nabla m_\eta \,\, m_\eta \cdot w 
%  \\ 
%  &- \alpha^2 \int (u \times \Delta m_\eta ) \cdot w 
%  - \alpha^2 \, \eta^2 \int (m_\eta \times H[u]) \cdot w
%  \\
%  &- \alpha^2 \, \eta^2 \int ( u \times H[m_\eta] ) \cdot w 
%   + \alpha \,\eta^2 \int m_\eta \cdot H[u] \, m_\eta \cdot w
%  \\
%  &- \alpha \,\eta^2 \int H[u] \cdot w + \alpha \, \eta^2 \int u \cdot H[m_\eta] \, m_\eta \cdot w
%  \\
%  =& -I_1 - I_2 -I_3 -I_4 -I_5 - I_6 -I_7 -I_8 \,.
 \end{align*}
 We analyze each term separately. For $I_1$ we remark that $m_\eta \times w = (- m^\eta_1 u_1,- m^\eta_1 u_2,- m^\eta_1 u_3)$
 since $m_\eta \cdot u = m^\eta_1 u_1 + m^\eta_2 u_2 + m^\eta_3 u_3 = 0$. It follows
  \begin{align*}
   I_1 &= \alpha^2 \int (m_\eta \times \Delta u ) \cdot w\, dx
  %\\
   = - \alpha^2 \int (m_\eta \times w) \cdot \Delta u\, dx
  %\\
  = \alpha^2 \int (m^\eta_1 u_1 \Delta u_1 + m^\eta_1 u_2 \Delta u_2 + m^\eta_1 u_3 \Delta u_3 ) \, dx\, ,
  \end{align*}
  and by integration by parts, we obtain
  \begin{align*} 
   I_1 &= -\alpha^2 \int m^\eta_1 \abs{\nabla u}^2 \, dx
   - \alpha^2 \int ( u_1 \nabla u_1 +  u_2 \nabla u_2 +  u_3 \nabla u_3 ) \cdot \nabla m^\eta_1 \, dx\,.
  \end{align*}
  Thanks to $\norm{m_\eta - e_1}_{L^\infty} \le C_0 \eta$, we have $m^\eta_1 \ge 0$ for $\eta$ 
  small enough, hence
  \begin{align*}
   I_1 &\le 
   - \alpha^2 \int ( u_1 \nabla u_1 +  u_2 \nabla u_2 +  u_3 \nabla u_3 ) \cdot \nabla m^\eta_1 \, dx\,.
  \end{align*}
  Together with the H{\"o}lder inequality, Young inequality, \eqref{eq:estimate for u}, \eqref{eq:assumptions for m}, and the embedding $H^1(\Omega) \hookrightarrow L^4(\Omega)$, we get
  \begin{align*}
   I_1 %&\le \alpha^2 \norm{u}_{L^4} \norm{\nabla u}_{L^2} \norm{\nabla m_\eta}_{L^4}
   %\\
   %&\le C \alpha^2 \eta^{1+ \frac{1}{2}} \big( \abs{\ui} + C \norm{\nabla u}_{L^2} \big) 
   %\norm{\nabla u}_{L^2}
  %\\
  %&= C \alpha^2 \, \eta^{1+\frac{1}{4}} \abs{\ui} \, \eta^{\frac{1}{4}}  \norm{\nabla u}_{L^2} + 
    %C \alpha^2 \eta^{1+ \frac{1}{2}} \norm{\nabla u}_{L^2}^2
  %\\
  & \le C \alpha^2 \, \eta^{2+\frac{1}{2}} \abs{\ui}^2 
  +  C \alpha^2 \, \eta^{\frac{1}{2}} \norm{\nabla u}_{L^2}^2 \,.
  \end{align*}
  Similarly, we find for $I_2$ the estimate $I_2\le C  \abs{\alpha} \, \eta^{2+\frac{1}{2}} \abs{\ui}^2 
   + C \abs{\alpha} \, \eta^{\frac{1}{2}}  \norm{\nabla u}_{L^2}^2$.
  %\begin{align*}
   %I_2 &= 2 \alpha \int \nabla u : \nabla m_\eta \, \, m_\eta \cdot w
   %\\
   %&\le 2 \abs{\alpha} \, \norm{\nabla u}_{L^2} \norm{\nabla m_\eta}_{L^4} \norm{u}_{L^4}
   %\\
   %&\le C \abs{\alpha} \, \eta^{1+\frac{1}{2}} \big( \abs{\ui} + C \norm{\nabla u}_{L^2} \big)
   %\norm{\nabla u}_{L^2}
   %\\
   %&\le C  \abs{\alpha} \, \eta^{2+\frac{1}{2}} \abs{\ui}^2 
   %+ C \abs{\alpha} \, \eta^{\frac{1}{2}}  \norm{\nabla u}_{L^2}^2 \,.
  %\end{align*}
  For $I_3$ we obtain by integration by parts the identity
  \begin{align*}
   I_3 %=& \alpha^2 \int (u\times \Delta m_\eta)\cdot w
   %\\
   %=& \alpha^2 \int ( -u_3^2 \Delta m^\eta_1 + u_1 u_3 \Delta m^\eta_3 
   %+ u_1 u_2 \Delta m^\eta_2 - u_2^2 \Delta m^\eta_1 )
  %\\
  =& 2 \alpha^2 \int u_3 \nabla u_3 \cdot \nabla m^\eta_1 \, dx
  - \alpha^2 \int u_1 \nabla u_3 \cdot \nabla m^\eta_3 \, dx- \alpha^2 \int u_3 \nabla u_1 \cdot \nabla m^\eta_3\, dx
  \\
  & - \alpha^2 \int u_1 \nabla u_2 \cdot \nabla m^\eta_2 \, dx
  - \alpha^2 \int u_2 \nabla u_1 \cdot \nabla m^\eta_2 \, dx
  + 2 \alpha^2 \int u_2 \nabla u_2 \cdot \nabla m^\eta_1 \, dx\,.
  \end{align*}
  With the usual inequalities, we get $I_3\le C \alpha^2 \eta^{2+\frac{1}{2}} \abs{\ui}^2
   + C \alpha^2 \eta^{\frac{1}{2}} \norm{\nabla u}_{L^2}^2$.
  %\begin{align*}
   %I_3 &\le C \alpha^2 \norm{u}_{L^4} \norm{\nabla u}_{L^2} \norm{\nabla m_\eta}_{L^4}
   %\\
   %&\le C \alpha^2 \eta^{1+\frac{1}{2}} \big( \abs{\ui} 
   %+ C \norm{\nabla u}_{L^2} \big) \norm{\nabla u}_{L^2}
   %\\
   %$&\le C \alpha^2 \eta^{2+\frac{1}{2}} \abs{\ui}^2
   %$+ C \alpha^2 \eta^{\frac{1}{2}} \norm{\nabla u}_{L^2}^2 \,.
  %\end{align*}
  We decompose $I_4$ as follows:
  \begin{align*}
   I_4 &= \alpha^2 \eta^2 \int \big( (m_\eta-e_1) \times H[u] \big) \cdot w \, dx
   + \alpha^2 \eta^2 \int (e_1 \times H[u] ) \cdot w\, dx
  %\\
  = I_4^1 + I_4^2 \,.
  \end{align*}
  For $I_4^1$ we find with the H{\"o}lder inequality, \eqref{eq:estimate for u}, and \eqref{eq:assumptions for m} the estimate
  \begin{align*}
   I_4^1 \le \alpha^2 \eta^2 \norm{m_\eta - e_1}_{L^\infty} \norm{u}_{L^2}^2 
   \le C \alpha^2 \eta^3 \abs{\ui}^2 +  C \alpha^2 \eta^3 \norm{\nabla u}_{L^2}^2 \,.
  \end{align*}
  We decompose $I_4^2$ with the help of \eqref{eq:decomposition} as follows:
  \begin{align*}
   I_4^2% =& \alpha^2 \eta^2 \int (e_1 \times H[\ui + \uii]) \cdot ( \uwi + \uwii )
  %\\
  =& \alpha^2 \eta^2 \int (e_1 \times H[\ui]) \cdot \uwi\, dx
  + \alpha^2 \eta^2 \int (e_1 \times H[\ui]) \cdot \uwii\, dx
  \\
  &+ \alpha^2 \eta^2 \int (e_1 \times H[\uii]) \cdot \uwi\, dx
  + \alpha^2 \eta^2 \int (e_1 \times H[\uii]) \cdot \uwii \, dx\,.
  \end{align*}
  We obtain with the H{\"o}lder inequality, Young inequality, and \eqref{eq:estimate for uii} the estimate
  \begin{align*}
  I_4^2%\le& \alpha^2 \eta^2 \int (e_1 \times H[\ui]) \cdot \uwi 
  %+ \alpha^2 \eta^2 \abs{\ui} \, \norm{\uwii}_{L^2} 
  %\\
  %+ \alpha^2 \eta^2 \norm{\uii}_{L^2} \abs{\uwi} 
  %\\&+ \alpha^2 \eta^2 \norm{\uii}_{L^2} \norm{\uwii}_{L^2}
  %\\
  %\le& \alpha^2 \eta^2 \int (e_1 \times H[\ui]) \cdot \uwi 
  %+ C \alpha^2 \eta^2 \abs{\ui} \, \norm{\nabla u}_{L^2} + C \alpha^2 \eta^2 \norm{\nabla u}_{L^2}^2
  %\\
  \le & \alpha^2 \eta^2 \int (e_1 \times H[\ui]) \cdot \uwi \, dx
  + C \alpha^2 \eta^3 \abs{\ui}^2 + C \alpha^2 \eta \norm{\nabla u}_{L^2}^2 \,.
  \end{align*}
  Moreover, we observe with the help of the demagnetizing tensor $\dt$ that
 \begin{align*}
  \alpha^2 \eta^2 \int (e_1 \times H[\ui]) \cdot \uwi \, dx &= 
  - \alpha^2 \eta^2 ( e_1 \times \dt\ui)\cdot \uwi 
%  \\
%   &= - \alpha^2 \eta^2
%   \left[
%   \left(
%   \begin{array}{c}
%    1\\0\\0
%   \end{array}
%   \right) \times
%   \left(
%   \begin{array}{c}
%    \lambda_1 \vi \\ \lambda_2 \vii \\ \lambda_3 \viii
%   \end{array}
%   \right)
%   \right]
%   \cdot
%   \left(
%   \begin{array}{c}
%    0\\-\viii\\\hphantom{-} \vii
%   \end{array}
%   \right)
%   \\
  = -\alpha^2 \eta^2 ( \lambda_3 \viii^2 + \lambda_2 \vii^2 ) 
 \end{align*}
  and conclude
  \begin{align*}
   I_4 \le -\alpha^2 \eta^2 ( \lambda_3 \viii^2 + \lambda_2 \vii^2 )
   + C \alpha^2 \eta^3 \abs{\ui}^2 + C \alpha^2 \eta \norm{\nabla u}_{L^2}^2 \,.
  \end{align*}
  We decompose $I_5$ as follows:
  \begin{align*}
   I_5 &= \alpha^2 \eta^2 \int ( u \times H[m_\eta - e_1] ) \cdot w \, dx
   + \alpha^2 \eta^2 \int ( u \times H[e_1] ) \cdot w\, dx
  %\\
  = I_5^1 + I_5^2 \,.
  \end{align*}
  With the H{\"o}lder inequality, the embedding $H^1(\Omega) \hookrightarrow L^4(\Omega)$, \eqref{eq:estimate for u}, and \eqref{eq:assumptions for m}, we find the estimate
  \begin{align*}
   I_5^1 &\le \alpha^2 \eta^2 \norm{u}_{L^4}^2 \norm{m_\eta - e_1}_{L^2} 
   \le C \alpha^2 \eta^3 \abs{\ui}^2 + C \alpha^2 \eta^3 \norm{\nabla u}_{L^2}^2 \,.
  \end{align*}
  In view of \eqref{eq:decomposition}, we split the second term as follows:
 \begin{align*}
  I_5^2 %=& \alpha^2 \eta^2 \int \big( (\ui + \uii) \times H[e_1] \big) \cdot (\uwi + \uwii)
  %\\
  =& \alpha^2 \eta^2 \int (\ui \times H[e_1]) \cdot \uwi\, dx
  + \alpha^2 \eta^2 \int (\ui \times H[e_1]) \cdot \uwii\, dx
 \\
 &+ \alpha^2 \eta^2 \int (\uii \times H[e_1]) \cdot \uwi\, dx
  +\alpha^2 \eta^2 \int (\uii \times H[e_1]) \cdot \uwii \, dx\,.
 \end{align*}
 With the usual arguments, we obtain
 \begin{align*}
  I_5^2 \le & \alpha^2 \eta^2 \int (\ui \times H[e_1]) \cdot \uwi \, dx+  C \alpha^2 \eta^3 \abs{\ui}^2 
  + C \alpha^2 \eta \norm{\nabla u}_{L^2}^2 \,.
 \end{align*}
Moreover, we have thanks to the demagnetizing tensor $\dt$ that
\begin{align*}
 \alpha^2 \eta^2 \int (\ui \times H[e_1]) \cdot \uwi \, dx=& 
 - \alpha^2 \eta^2 (\ui \times \dt e_1 ) \cdot \uwi
% \\
%  =&-\alpha^2 \eta^2 
%  \left[
%  \left(
%  \begin{array}{c}
%   \vi \\ \vii \\ \viii
%  \end{array}
%  \right)
%  \times
%  \left(
%  \begin{array}{c}
%   \lambda_1 \\ 0 \\ 0
%  \end{array}
%  \right) \right]
%  \cdot
%  \left(
%  \begin{array}{c}
%   0 \\ -\viii \\ \hphantom{-} \vii
%  \end{array}
%  \right)
%  \\
 = \alpha^2 \eta^2 ( \lambda_1 \viii^2 + \lambda_1 \vii^2) \, ,
\end{align*}
and this shows
\begin{align*}
 I_5 \le \alpha^2 \eta^2 ( \lambda_1 \viii^2 + \lambda_1 \vii^2)
 +  C \alpha^2 \eta^3 \abs{\ui}^2 
  + C \alpha^2 \eta \norm{\nabla u}_{L^2}^2 \,.
\end{align*}
Since $w \cdot e_1 = 0$, we find for $I_6$ together with \eqref{eq:estimate for u} and \eqref{eq:assumptions for m} that
\begin{align*}
 I_6 &= - \alpha \, \eta^2 \int m_\eta \cdot H[u] \, (m_\eta - e_1)\cdot w \, dx
% \\
% &\le \abs{\alpha} \, \eta^2 \norm{u}_{L^2}^2 \norm{m_\eta - e_1}_{L^\infty}
% \\
 \le C \abs{\alpha} \, \eta^3 \abs{\ui}^2 +  C \abs{\alpha} \, \eta^3 \norm{\nabla u}_{L^2}^2 \,.
\end{align*}
We decompose $I_7$ as follows
\begin{align*}
 I_7 %=& \alpha \, \eta^2 \int H[\ui + \uii] \cdot ( \uwi + \uwii )
 %\\
 =& \alpha \, \eta^2 \int H[\ui] \cdot  \uwi \, dx+ \alpha \, \eta^2 \int H[\ui] \cdot  \uwii\, dx
 + \alpha \, \eta^2 \int H[\uii] \cdot  \uwi\, dx
 %\\ 
  + \alpha \, \eta^2 \int H[\uii] \cdot  \uwii\, dx
\end{align*}
and find as usual the estimate
\begin{align*}
 I_7 &\le \alpha \, \eta^2 \int H[\ui] \cdot  \uwi \, dx+ C \abs{\alpha} \, \eta^3 \abs{\ui}^2 +
 C \abs{\alpha} \, \eta \norm{\nabla u}_{L^2}^2 \,.
\end{align*}
Furthermore, we see with the help of the demagnetizing tensor $\dt$ that
\begin{align*}
 \alpha \, \eta^2 \int H[\ui] \cdot \uwi\, dx &= - \alpha \, \eta^2 \dt\ui \cdot \uwi 
% \\
%  &= - \alpha \, \eta^2
% \left(
% \begin{array}{c}
%  \lambda_1 \vi \\ \lambda_2 \vii \\ \lambda_3 \viii
% \end{array}
% \right)
% \cdot
% \left(
% \begin{array}{c}
%  0 \\ - \viii \\ \hphantom{-} \vii
% \end{array}
% \right)
% \\
= - \alpha \, \eta^2 (\lambda_3 - \lambda_2) \vii \viii 
\end{align*}
and conclude
\begin{align*}
 I_7 \le - \alpha \, \eta^2 (\lambda_3 - \lambda_2) \vii \viii
 + C \abs{\alpha} \, \eta^3 \abs{\ui}^2 +
 C \abs{\alpha} \, \eta \norm{\nabla u}_{L^2}^2 \,.
\end{align*}
We make again use of $w \cdot e_1 = 0$ and obtain
\begin{align*}
 I_8 &= - \alpha \, \eta^2 \int u \cdot  H[m_\eta] \, (m_\eta - e_1 ) \cdot w \, dx
 %\\
 \le C \abs{\alpha} \, \eta^2 \norm{m_\eta - e_1}_{L^\infty} \norm{u}_{L^4}^2 
 %\\
 \le C \abs{\alpha} \, \eta^3 \abs{\ui}^2 +  C \abs{\alpha} \, \eta^3 \norm{\nabla u}_{L^2}^2 \,.
\end{align*}
Summarizing, we have the estimate
\begin{align*}
 (-\mathcal{L}^\eta_0 u , \alpha w)_{L^2}  \ge& 
  \alpha \, \eta^2 (\lambda_3 - \lambda_2) \vii \viii
 + \alpha^2 \eta^2 ( \lambda_3 - \lambda_1) \viii^2 + \alpha^2 \eta^2 (\lambda_2 
 - \lambda_1 ) \vii^2
 \\
 &-C (\alpha^2 + \abs{\alpha}) \eta^{\frac{1}{2}} \norm{\nabla u}_{L^2}^2
 -C (\alpha^2 + \abs{\alpha} ) \eta^{2+\frac{1}{2}} \abs{\ui}^2 \,.
\end{align*}
Since $\lambda_3,\lambda_2 > \lambda_1$, the statement of the lemma follows.
\end{proof}
A combination of Lemmas \ref{lem:first estimate} and \ref{lem:second estimate} yields the desired result:
\begin{lem}
 \label{lem:second step}
 There is a positive constant $\eta_0 = \eta_0 (\Omega,\alpha)$ such that 
 $0$ does not belong to the point spectrum  of $\mathcal{L}^\eta_0$ for every $0<\eta\le\eta_0$.
\end{lem}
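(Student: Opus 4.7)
The plan is to combine Lemma \ref{lem:first estimate} and Lemma \ref{lem:second estimate} additively so that the sign-indefinite cross-term $\alpha\eta^2(\lambda_3-\lambda_2)\vii\viii$ cancels, leaving a coercive quadratic form in $u$. Concretely, I would test the equation $\mathcal{L}^\eta_0 u = 0$ against $u + \alpha w[u]$, where $w[u]=(0,-u_3,u_2)$ is the auxiliary vector field introduced just before Lemma \ref{lem:second estimate}.

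Adding the two estimates, the contribution $-\alpha\eta^2(\lambda_3-\lambda_2)\vii\viii$ from Lemma \ref{lem:first estimate} is exactly compensated by $+\alpha\eta^2(\lambda_3-\lambda_2)\vii\viii$ from Lemma \ref{lem:second estimate}, and I obtain a bound of the schematic form
\begin{align*}
 \bigl(-\mathcal{L}^\eta_0 u, u + \alpha w[u]\bigr)_{L^2} \ge \bigl(1 - K_1(\alpha)\eta^{\frac{1}{2}}\bigr) \norm{\nabla u}_{L^2}^2 + \eta^2\bigl((\lambda_2-\lambda_1) - K_2(\alpha)\eta^{\frac{1}{2}}\bigr) \abs{\ui}^2,
\end{align*}
where $K_1,K_2$ depend on $\alpha$ and $\Omega$ but are independent of $\eta$. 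Since the shape assumption gives $\lambda_1 < \lambda_2$, for $\eta$ sufficiently small (depending on $\alpha$ and $\Omega$) both coefficients are bounded below by positive constants, say $\frac{1}{2}$ and $\frac{1}{2}(\lambda_2-\lambda_1)$ respectively.

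Next, I take an arbitrary real $u \in T_{m_\eta}\mathcal{M}$ with $\mathcal{L}^\eta_0 u = 0$. The left-hand side above then vanishes, which forces simultaneously $\norm{\nabla u}_{L^2}=0$ and $\abs{\ui}=0$. Using the decomposition $u = \ui + \uii$ together with the Poincar\'e-type estimate \eqref{eq:estimate for uii}, I conclude $\uii = 0$ and hence $u\equiv 0$. Finally, because $\mathcal{L}^\eta_0$ is a real operator, any complex solution of $\mathcal{L}^\eta_0 u = 0$ splits into real and imaginary parts, each lying in $T_{m_\eta}\mathcal{M}$ and annihilated by $\mathcal{L}^\eta_0$; the real argument therefore kills both, so $0\notin\sigma_P(\mathcal{L}^\eta_0)$.

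The main obstacle is really packaged in the two preceding lemmas; at the level of this proof the delicate point is purely algebraic, namely to check that the signs in Lemmas \ref{lem:first estimate} and \ref{lem:second estimate} are arranged so that the bad cross-term $\alpha\eta^2(\lambda_3-\lambda_2)\vii\viii$ cancels exactly after addition, rather than doubling or only partially cancelling. Once that is verified, the shape hypothesis $\lambda_1 < \lambda_2$ directly supplies the positive coercivity constant, and the choice of $\eta_0$ depending on $\alpha$ is dictated by absorbing the error terms of order $|\alpha|\eta^{1/2}$ and $(\alpha^2 + |\alpha|)\eta^{1/2}$.
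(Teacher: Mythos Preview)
Your proposal is correct and matches the paper's intended argument exactly: the paper simply states that ``a combination of Lemmas \ref{lem:first estimate} and \ref{lem:second estimate} yields the desired result'' without further detail, and what you have written is precisely that combination---testing against $u+\alpha w[u]$ so that the indefinite cross-term $\alpha\eta^2(\lambda_3-\lambda_2)\vii\viii$ cancels, then using $\lambda_2>\lambda_1$ and smallness of $\eta$ for coercivity. Your remark on passing from real to complex kernel elements is also appropriate, since the estimates in Lemmas \ref{lem:first estimate} and \ref{lem:second estimate} are stated for real $u\in T_{m_\eta}\mathcal{M}$.
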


\section{Existence of $T$-periodic solutions}
\label{sec:main result - small particles}
Finally, we can state our main result, which is to the author's best knowledge the first existence result concerning time-periodic solutions for the three-dimensional LLG.
\begin{theorem*}
%\label{thm:1}
 Suppose that $\Omega \subset \setR^3$ is a bounded $C^{2,1}$-domain with $\abs{\Omega}=1$ such that the smallest eigenvalue of the 
 demagnetizing tensor $\dt$ is simple. Furthermore let $h \in C^{0,\beta}(\setR,L^2(\Omega,\setR^3))$ be $T$-periodic, where $T>0$ and $0<\beta<1$.
 Then there is a positive constant $\eta_0 = \eta_0 (\Omega,\alpha)$ 
 with the following property: For every $0<\eta\le\eta_0$ there exists an open neighborhood
 $V=V(\eta,\alpha,h,\Omega)$ of $0$ in $\setR$ such that the rescaled LLG
%  \begin{align*}
%  \begin{array}{cll}
%   m_t =&
% \hspace{-0,3cm}\alpha \, m \times (\Delta m + \eta^2 H[m] + \lambda \eta^2 h )  &
%   \\
%    &\hspace{-0,3cm}- m \times ( m \times
%   (\Delta m + \eta^2 H[m] + \lambda \eta^2 h ) ) 
% & {\qquad \text{in } \setR\times\Omega}
%   \\
%   \frac{\partial m}{\partial \nu} =& \hspace{-0,3cm}0 & \qquad \text{on } \setR\times\partial\Omega
%   \\
%   \abs{m}=&\hspace{-0,3cm}1& \qquad \text{in }  \setR\times\Omega
%  \end{array}
%  \end{align*}
  possesses a $T$-periodic solution
  %\begin{align*}
   $m \in C^1\big(\setR,L^2(\Omega,\setR^3)\big) \cap C\big(\setR,H^2_N(\Omega,\setR^3)\big)$
  %\end{align*}
  for every $\lambda \in V$. By scaling the statement carries over to the original LLG on $\Omega_\eta = \eta \, \Omega$.
\end{theorem*}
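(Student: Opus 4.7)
The plan is to combine all the machinery developed in the previous sections into a continuation argument. Choose $\eta$ sufficiently small so that Theorem~\ref{thm:regularity}, Lemma~\ref{lem:existence} and Lemma~\ref{lem:second step} all apply. Work on the smooth manifold $\mathcal{M}$ from Lemma~\ref{lem:manifold} and define the time-$T$ Poincar\'e-type map
\[
 f: (U_\eta \cap \mathcal{M}) \times V_\eta \to \mathcal{M}, \qquad f(u,\lambda) = m(T,u,\lambda),
\]
where $m(\cdot,u,\lambda)$ is the solution of $(LLG)_\eta$ from Lemma~\ref{lem:existence}. That $f$ actually takes values in $\mathcal{M}$ whenever $u \in \mathcal{M}$ is exactly the statement of Lemma~\ref{lem:punctual norm}, and smoothness of $f$ comes from the smoothness of $(u,\lambda) \mapsto m(\cdot,u,\lambda)$ as a mapping into $X$, composed with the bounded evaluation at $t=T$. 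Since $m_\eta$ is a stationary solution of $(LLG)_\eta$ at $\lambda=0$, one has $f(m_\eta,0)=m_\eta$, so the problem of finding a $T$-periodic solution near $m_\eta$ is reduced to solving the fixed-point equation $f(u,\lambda)=u$ on $\mathcal{M}$.

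To apply Lemma~\ref{lem:continuation} with $\mathcal{N}=\mathcal{M}$ and $u_0 = m_\eta$, the crucial hypothesis is that $D_1 f(m_\eta,0) - I = e^{T\mathcal{L}^\eta} - I$ is an isomorphism of $T_{m_\eta}\mathcal{M}$. I would verify this in the two-step pattern already laid out after Lemma~\ref{lem:L0 is sectorial}: first, combining Lemma~\ref{lem:first step} and Lemma~\ref{lem:second step} gives $\sigma_P(\mathcal{L}^\eta_0) \cap \dot{\imath}\setR = \emptyset$; since $\mathcal{L}^\eta_0$ has compact resolvent, its spectrum coincides with its point spectrum, so $\dot{\imath}\setR \subset \rho(\mathcal{L}^\eta_0)$. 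The spectral mapping theorem for analytic semigroups then gives $1 \notin \sigma(e^{T\mathcal{L}^\eta_0})$, and the smoothing argument recorded right after Lemma~\ref{lem:L0 is sectorial} transfers invertibility of $e^{T\mathcal{L}^\eta_0}-I$ on $\mathcal{D}_\eta$ to invertibility of $e^{T\mathcal{L}^\eta}-I$ on $T_{m_\eta}\mathcal{M}$.

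With the isomorphism in hand, Lemma~\ref{lem:continuation} produces an open neighborhood $V \subset V_\eta$ of $0$ and a smooth mapping $g:V\to \mathcal{M}$ with $g(0)=m_\eta$ and $f(g(\lambda),\lambda) = g(\lambda)$, i.e., $m(T,g(\lambda),\lambda) = g(\lambda)$. Setting $m_\lambda = m(\cdot,g(\lambda),\lambda)$, this equality together with $T$-periodicity of $h$ and the uniqueness part of Lemma~\ref{lem:existence} allows one to extend $m_\lambda$ to all of $\setR$ as a $T$-periodic function solving the rescaled LLG. Because the extension lies in $X$ on each period, it belongs to $C^1(\setR,L^2)\cap C(\setR,H^2_N)$, and Lemma~\ref{lem:punctual norm} guarantees $|m_\lambda|=1$ pointwise. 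The scaling statement on $\Omega_\eta$ is a direct consequence of the space-time rescaling introduced in Section~\ref{sec:energy functional for small particles}.

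The main obstacle is concentrated in the spectral analysis, and has in fact already been handled: the estimates of Lemmas~\ref{lem:first estimate} and~\ref{lem:second estimate} — which in turn rely on the $L^p$ smallness of $\nabla m_\eta$ from Corollary~\ref{cor:l4estimate}, the orientation result in Lemma~\ref{lem:orientation}, and the shape hypothesis $\lambda_1 < \lambda_2 \le \lambda_3$ on the demagnetizing tensor $\dt$ — are what forces $0$ out of $\sigma_P(\mathcal{L}^\eta_0)$ for $\eta$ small. Beyond that, the remaining work is essentially bookkeeping: checking that $f$ is well defined as a map between manifolds, that the derivative computation $D_u m(T,m_\eta,0)=e^{T\mathcal{L}^\eta}$ from Lemma~\ref{lem:existence} transfers to the restricted derivative on tangent spaces, and that the periodic extension of $m_\lambda$ has the claimed regularity class.
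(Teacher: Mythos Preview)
Your proposal is correct and follows essentially the same route as the paper: define the time-$T$ map $f(u,\lambda)=m(T,u,\lambda)$ on $\mathcal{M}$, reduce to the fixed-point equation, verify via Lemmas~\ref{lem:first step} and~\ref{lem:second step} together with the spectral mapping/smoothing discussion after Lemma~\ref{lem:L0 is sectorial} that $e^{T\mathcal{L}^\eta}-I$ is an isomorphism on $T_{m_\eta}\mathcal{M}$, and conclude with Lemma~\ref{lem:continuation}. Your write-up is slightly more explicit about the periodic extension and regularity, but the argument is the same.
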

\begin{proof}
Since $\Omega$ is a bounded $C^{2,1}$-domain, we know that minimizers $m_\eta$ of $E^\eta_\text{res}$ are regular for every $0<\eta\le\eta_0(\Omega)$. Let now $0<\eta\le\eta_0(\Omega)$ be given and $m_\eta$ be a minimizer of $E^\eta_\text{res}$. The regularity assumptions on $h$ imply together with Lemma \ref{lem:existence} the existence of a unique solution $m(\cdot,u,\lambda)$ for $(LLG)_\eta$ close to $m_\eta$ for all $u \in U$, $\lambda \in V$, where $U$ is an open neighborhood of $m_\eta$ in $H^2_N(\Omega,\setR^3)$, and $V$ is an open neighborhood of $0$ in $\setR$. Moreover, the mapping
%\begin{align*}
$ f:U \times V \to H^2_N(\Omega,\setR^3): (u,\lambda) \mapsto m(T,u,\lambda)$
%\end{align*}
is smooth, and because of Lemma \ref{lem:punctual norm}, we have $f(U\cap \mathcal{M} \times V) \subset \mathcal{M}$. Since $h$ is $T$-periodic, we know that $m(\cdot,u,\lambda)$ defines a $T$-periodic solution for $(LLG)_\eta$ with saturation constraint $\abs{m}=1$ if and only if $f(u,\lambda) = u$ and $u \in \mathcal{M}$. In particular, it is enough to solve the parameter dependent fixed point equation $f(u,\lambda)=u$
on $\mathcal{M}$. With the help of Lemma \ref{lem:existence} we find that $D_1f(m_\eta,0)u = e^{T \mathcal{L}^\eta_0}u$ for $u \in T_{m_\eta}\mathcal{M}$.
Thanks to Lemmas \ref{lem:first step} and \ref{lem:second step} we know that $\sigma_P(\mathcal{L}^\eta_0) \cap \dot{\imath} \setR = \emptyset$ whenever $0<\eta\le\eta_0(\Omega,\alpha)$. Now, the discussion at the end of Section \ref{sec:continuation} shows that $D_1f(m_\eta,0) -I$ is invertible on $T_{m_\eta}\mathcal{M}$, and the theorem follows from Lemma \ref{lem:continuation}.
\end{proof}

\medskip

\noindent {\bf Acknowledgments.}
This work is part of the author's PhD thesis prepared at the Max Planck Institute for Mathematics in the Sciences (MPIMiS) and submitted in June 2009 at the University of Leipzig, Germany. The author would like to thank his supervisor Stefan M{\"u}ller for the opportunity to work at MPIMiS and for having chosen an interesting problem to work on. Financial support from the International Max 
Planck Research School `Mathematics in the Sciences' (IMPRS) is also acknowledged.

\bibliographystyle{abbrv}
\bibliography{article-small}

\bigskip\small

\noindent{\sc NWF I-Mathematik, Universit\"at Regensburg,  93040 Regensburg}\\
{\it E-mail address}: {\tt alexander2.huber@mathematik.uni-regensburg.de}

\end{document}